\definecolor{ultramarine}{RGB}{0,32,96}
\definecolor{DarkerGreen}{RGB}{0,170,0}
\definecolor{DarkerRed}{RGB}{170,0,0}
\definecolor{myRed}{rgb}{0.450385, 0.157961, 0.217975}
\definecolor{myBlue}{rgb}{0.139681, 0.311666, 0.550652}
\definecolor{myKcolor}{rgb}{0, 0.411765, 0.572549}
\definecolor{myEcolor}{rgb}{0, 0, 0}
\newcommand{\Fig}[1]{Fig.~\ref{#1}}
\NewDocumentCommand\sstr{o}{\IfNoValueTF{#1}{\sigma}{\sigma\ar{#1}}}
\newcommand{\sbra}[1]{\left[ #1 \right]}
\newcommand\res{\mathop{\hbox{\vrule height 7pt width .5pt depth 0pt
			\vrule height .5pt width 6pt depth 0pt}}\nolimits}
\newcommand\eps{\varepsilon}
\newcommand\R{\mathbb{R}}
\newcommand\N{\mathbb{N}}
\newcommand\calG{\mathcal{G}}
\newcommand\calA{\mathcal{A}}
\newcommand\calL{\mathcal{L}}
\newcommand\calH{\mathcal{H}}
\newcommand\callF{\mathcal{F}}
\newcommand\calulu{\mathfrak{U}}
\newtheorem{theorem}{Theorem}[section]
\newtheorem{proposition}[theorem]{Proposition}
\newtheorem{lemma}[theorem]{Lemma}
\newtheorem{remark}[theorem]{Remark}
\newtheorem{corollary}[theorem]{Corollary}
\numberwithin{equation}{section}
\newcommand\Functeps{\mathcal F_\eps}
\newcommand\Functepstilde{\widetilde{\mathcal F}_\eps}
\newcommand\dx{{\mathrm d}x}
\newcommand\dt{{\mathrm d}t}
\newcommand\dd{{\mathrm d}}
\newcommand\PotDann{\omega}
\newcommand\FailureS{\overline{\varsigma}}
\newcommand{\hsigmabarra}{h_{\FailureS}}
\newcommand\Functepsk{\mathcal F_{\eps_k}}
\newcommand\Functepskj{\mathcal F_{\eps_{k_j}}}
\newcommand\Functu{\mathscr{F}}
\newcommand\FunctepsDir{\mathcal D_\eps}
\newcommand\hatf{l}
\begin{document}

\begin{center}
  {\Large
{Phase-field modelling of cohesive fracture. Part I: $\Gamma$-convergence results}}\\[5mm]
{\today}\\[5mm]
Roberto Alessi$^{1}$, Francesco Colasanto$^{2}$ and Matteo Focardi$^{2}$\\[2mm]
{\em $^{1}$ DICI, Università di Pisa,\\ 56122 Pisa, Italia}\\[1mm]
{\em $^{2}$ DiMaI, Universit\`a di Firenze\\ 50134 Firenze, Italy}
\\[3mm]
\begin{minipage}[c]{0.8\textwidth}
The main aim of this three-part work is to provide a unified consistent framework for the phase-field modeling of cohesive fracture.
In this first paper we establish the mathematical foundation of a cohesive phase-field model by proving a $\Gamma$-convergence result in a one-dimensional setting. Specifically, we consider a broad class of phase-field energies, encompassing different models present in the literature, thereby both extending the results in \cite{ContiFocardiIurlano2016} and providing an analytical validation of all the other approaches. Additionally, by modifying the functional scaling, we demonstrate that our formulation also generalizes the Ambrosio-Tortorelli approximation for brittle fracture, therefore laying the groundwork for a unified framework for variational fracture problems.
The Part~II paper presents a systematic procedure for constructing phase-field models that reproduce prescribed cohesive laws,
whereas the Part~III paper validates the theoretical results with applied examples.
\end{minipage}
\end{center}

\tableofcontents

\section{Introduction}
\label{sec_cpflit}
\FloatBarrier

\subsection{Background and Motivation}

Understanding and predicting fracture initiation and propagation is essential for designing safer and more resilient mechanical systems while preventing structural failures. Over the past century, Griffith’s brittle fracture theory \cite{Griffith1921} has become one of the most fundamental and widely used models in fracture mechanics. In Griffith’s framework, fracture energy is assumed to be dissipated entirely upon the formation of a crack, with no residual forces acting between the crack surfaces, regardless of the displacement jump across them (\Fig{fig_BCL}).

Despite its simplicity and historical significance, Griffith’s model presents two key limitations: (i)~it cannot predict crack initiation in an initially pristine elastic body, requiring a pre-existing defect \cite{marigo2010,Tanne2018,Kumar2020}, and (ii)~it leads to unrealistic scale effects in fracture predictions \cite{Marigo2023}. Cohesive fracture models overcome these issues by admitting nonzero forces, the cohesive forces, between crack surfaces. These models, originally proposed by Dugdale and Barenblatt \cite{Dugdale1960,Barenblatt1962}, define a surface energy density that depends on the displacement jump, providing a more realistic representation of fracture processes (\Fig{fig_BCL}). Characterized by a critical stress and a characteristic length, cohesive laws effectively address the deficiencies of Griffith’s model \cite{Marigo2023}.

Building on the variational reformulation of Griffith’s fracture as a free-discontinuity energy minimization problem \cite{Marigo1998,Braides95}, cohesive fracture models have also been recast within a similar variational framework \cite{Bourdin2008}.
\begin{figure}[h!]
 \centering
 \small
  \includegraphics[width=\linewidth]{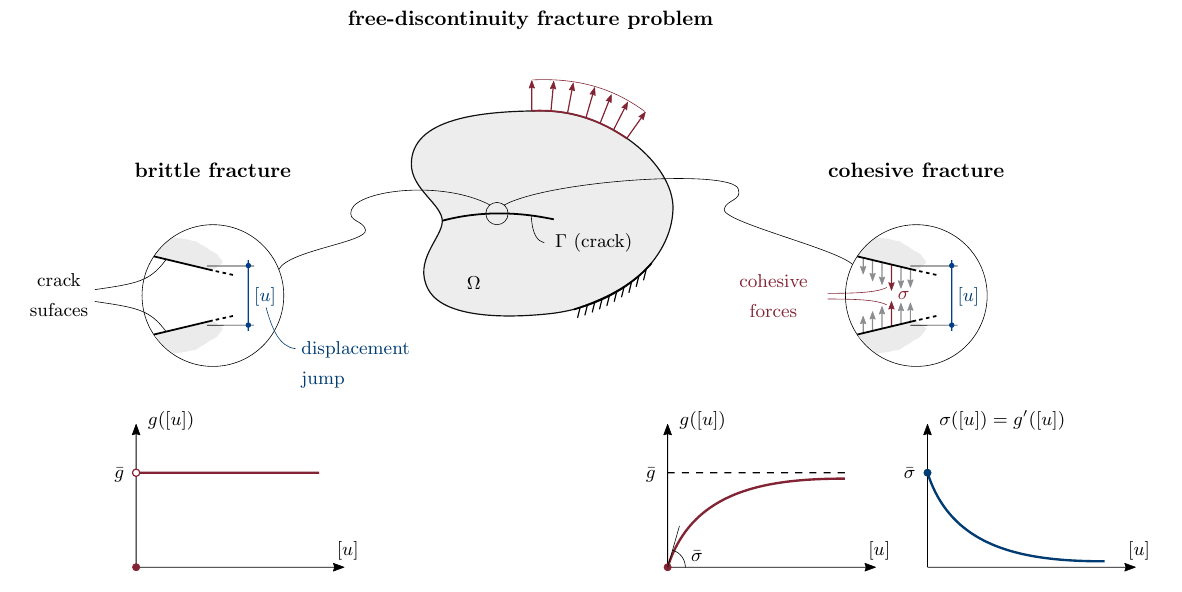}
 \caption{Paradigmatic fracture problem: qualitative trends of the surface energy density $g$ and associated cohesive stress $\sstr$ with respect to the crack opening (displacement jump) $\sbra{u}$ for brittle and cohesive fracture models.}
 \label{fig_BCL}
\end{figure}
The free-discontinuity brittle fracture energy minimization problem admits a regularization that has been explored by the first time in \cite{Bourdin2000b,Focardi2001}, drawing inspiration from the work of \cite{Ambrosio1992} (see also \cite{AT90}).
Within the context of $\Gamma$-convergence, the crack path emerges from the localization of a smooth phase-field as the regularization length approaches zero \cite{Braides1998}.
The regularization of brittle fracture via phase-field models has enabled the numerical simulation of complex fracture processes that were impractical with classical methods \cite{Bourdin2014,Mesgarnejad2015}. Today, the phase-field approach is a leading strategy in fracture mechanics, allowing for the capture of crack nucleation, both with and without pre-existing notches, and the modeling of intricate crack patterns. Its simple numerical implementation, often done by alternate minimization schemes, has also contributed to its widespread diffusion.
The work of \cite{Bourdin2000b} provides a key link between the mathematical results and engineering application within the fracture mechanics field.

However, phase-field models for brittle fracture inherit a critical limitation from Griffith's model: the inability to independently control the critical strength, the fracture toughness (or critical energy release rate), and the regularization length.
As a result, the nucleation stress threshold cannot be directly linked to the sharp interface of Griffith's model. This limitation hinders the development of a flexible and general model capable of accurately describing crack nucleation in smooth or notched domains \cite{Bourdin2008,Tanne2018,Lopez-Pamies2024}.

As noted in \cite{Marigo2023}, cohesive fracture models are a natural way to address the shortcomings of Griffith’s brittle fracture theory. Inspired by \cite{Pham2010c,Pham2010a}, \cite{Lorentz2011} developed a standalone gradient-damage (phase-field) model, which is not derived from a free-discontinuity fracture problem. This model was first used to study cohesive fracture in a one-dimensional setup, providing closed-form solutions. A higher-dimensional extension of this model applied to large-scale simulations of failure processes has been accomplished in \cite{Lorentz2011a}.
Despite its potential, this cohesive phase-field model initially received little attention from the fracture mechanics community.

A successful attempt to create a mathematically consistent variational phase-field model regularizing the free-discontinuity cohesive fracture problem in \cite{Bourdin2008} was made by \cite{ContiFocardiIurlano2016} (see also \cite{ContiFocardiIurlano2022,ContiFocardiIurlanopgrowth} for the vector valued analogues).
This work, like \cite{Bourdin2000b}, represents a valuable intersection between the mathematical and engineering communities.
The subsequent numerical implementation by \cite{Freddi2017} faced several challenges, such as developing a backtracking algorithm and further regularizing the degradation function with respect to the internal length. Additionally, the difficulty in tuning the elastic degradation function to match a specific cohesive law, combined with the use of a fixed quadratic phase-field dissipation function, likely limited the model's flexibility and hindered its broader use within the fracture mechanics community.

Significant advancements were made by \cite{Wu2017,Wu2018b}, who, building on \cite{Lorentz2011}, introduced a new generation of phase-field cohesive fracture models. Also these models are grounded in the well-established variational framework of gradient-damage models \cite{Pham2010c,Pham2010a}. By incorporating polynomial crack geometric functions and rational energetic degradation functions, they allow for the independent tuning of critical stress, fracture toughness, and regularization length to match specific softening laws. A key aspect in these models, adopted by \cite{ContiFocardiIurlano2016,Wu2017,Wu2018b}, is the inclusion of the regularization length within the elastic degradation function, enabling a more precise description of cohesive fracture failures.

Despite their flexibility and soundness, the process of setting the material functions to achieve a target softening law remained unclear. A major contribution in this regard came from \cite{Feng2021}, who introduced an integral relation that links a single unknown function, defining both the degradation and phase-field dissipation functions, to the desired traction-separation law.

It is worth emphasising that despite claims of $\Gamma$-convergence in \cite[Sec. 5.1]{Wu2017} and in \cite[Sec. 6]{Feng2021}, these efforts primarily demonstrated numerical convergence rather than a rigorous mathematical proof.
It is then clear that an analytical proof of the $\Gamma$-convergence of \cite{Wu2017,Wu2018b} and \cite{Feng2021}
phase-field models towards the free-discontinuity variational cohesive fracture model set
up by \cite{Bourdin2008} is still missing, in the same spirit of what has been done by
\cite{ContiFocardiIurlano2016} (cf. also \cite{ContiFocardiIurlano2022,ContiFocardiIurlanopgrowth,Colasanto2024}).

A timeline, visually summarizing the state of the art discussed above, is presented in the introduction of the third part paper, \cite{Alessi2025c}, where the key milestone works with their logical connections are highlighted, including the present study, within the variational approach to fracture.

\bigskip

The main aim of this three-part work is to provide a unified consistent framework for the phase-field modeling of cohesive fracture, similar to what has been done in \cite{Bourdin2008} for the the phase-field modeling of brittle fracture, that bridges the mathematical results of the first two parts with the more applied and engineering oriented third part.
One of the main goals of this first part work is exactly to fill the gap discussed above by extending the $\Gamma$-convergence
results in \cite{ContiFocardiIurlano2016}, thus proposing a unified mathematically foundation for cohesive phase
field fracture models.
More precisely, in this first paper we prove a $\Gamma$-convergence result in the one-dimensional setting
(the general case is addressed in \cite{Colasanto2024}, see the discussion after
Theorem~\ref{t:finale brittle}) for a very general family
of phase-field energies which encompass at the same time the models introduced in \cite{ContiFocardiIurlano2016}, \cite{Wu2017,Wu2018b}, \cite{Feng2021} and in \cite{Lammen2023,Lammen2025} (see Section~\ref{s:model} below). In addition, changing the scaling in the functionals, we show that our model provides also a generalization of the classical Ambrosio-Tortorelli model for the approximation of brittle energies.
In the second paper \cite{Alessi2025b}, we take advantage of the general model introduced in this first part work to
set-up an analytical procedure to construct the phase-field model in order to obtain assigned cohesive laws, either by fixing the degradation function and choosing the damage potential or viceversa. This validates rigorously and generalizes the numerical results of \cite{Feng2021} allowing, for instance, to derive different phase-field models sharing the same target cohesive law, and hence exhibiting the same overall cohesive fracture behaviour, but different localized phase-field profiles evolutions.
In the third paper \cite{Alessi2025c}, the mechanical responses of different phase-field models associated to canonical traction-separation laws
in a one-dimensional setting are investigated in depth under a more engineering oriented perspective providing a link with and supporting the conclusions of the theoretical results of the first two parts.

\subsection{A general phase-field model}\label{s:model}

To introduce the mentioned new phase-field functionals we recall both
the model introduced in \cite{ContiFocardiIurlano2016} (actually a slight generalization of it),
and its regularization proposed in \cite{Wu2017,Wu2018b} and later specified in \cite{Feng2021}.
To this aim we fix some notation and state some assumptions that will be used throughout the paper.
For every $t\in[0,1)$ let
\begin{equation}\label{e:f}
f(t):=\left( \frac{\hatf(t)}{Q(1-t)}\right)^{\sfrac12}\,,
\end{equation}
where
\begin{itemize}
\item[(Hp~$1$)] $\hatf,\, Q\in C^0([0,1],[0,\infty))$ with $\hatf (1)= 1$,
$\hatf^{-1}(\{0\})=Q^{-1}(\{0\})=\{0\}$, $Q$ non-decreasing in a right neighbourhood of the origin,
and such that $[0,1)\ni t\mapsto f(t)$ is non-decreasing in a left neighbourhood of $t=1$.
\end{itemize}
Clearly, assuming  $\hatf (1)= 1$ is not restrictive up to a change of $Q$ by a multiplicative factor.
Then, for every $\eps>0$, set $\widetilde{f}_\eps(1):=1$, for $t\in[0,1)$
\begin{equation}\label{e:f eps tilde}
    \widetilde{f}_\eps(t):=1\wedge \eps^{\sfrac12}f(t)\,,
\end{equation}
and
\begin{equation}\label{e:f eps}
f_\eps(t):=\left( \frac{\eps \hatf(t)}{\eps \hatf(t)
+Q(1-t)}\right)^{\sfrac12}\,.
\end{equation}
Note that
$[0,1]\ni t\mapsto f_{\eps}(t)$ is non-decreasing in a left neighbourhood of $t=1$, as well.
Consider next
\begin{itemize}
\item[(Hp~$2$)] $\PotDann\in C^0([0,1],[0,\infty))$ such that $\PotDann^{-1}(\{0\})=\{0\}$,
and the following limit exists
\begin{equation}\label{e:FailureS def}
\lim_{t\to 0^+}\left(\frac{\PotDann(t)}{Q(t)}\right)^{\sfrac12}
=:\FailureS\in[0,\infty]\,.
 \end{equation}
\end{itemize}
Let $\Omega\subset\R$ be a bounded and open interval, and
$\calA(\Omega)$ the family of its open subsets.
Then define $\Functeps^{(1)},\Functeps^{(2)}:L^1(\Omega,\R^2)\times\calA(\Omega)\to[0,\infty]$ respectively by
\begin{equation}\label{functeps CFI}
 \Functeps^{(1)}(u,v,A):= \int_A \left(
 \widetilde{f}_\eps^2(v) |u'|^2 + \frac{\PotDann(1-v)}{4\eps} + \eps |v'|^2 \right) \dx\,
\end{equation}
and
\begin{equation}\label{functeps Wu}
 \Functeps^{(2)}(u,v,A):= \int_A \left( f_\eps^2(v) |u'|^2 + \frac{\PotDann(1-v)}{4\eps} + \eps |v'|^2 \right) \dx
\end{equation}
if $(u,v)\in H^1(\Omega, \R\times[0,1])$ and $\infty$ otherwise.

Some remarks are in order: the original model in \cite{ContiFocardiIurlano2016} corresponds to
$\widetilde{f}_\eps$ defined with $\hatf(t)$ as above, $Q(t)=\lambda^{-2}t^{2q}$,
$q\geq 1$, $\lambda\in(0,\infty)$, and $\PotDann(t)=t^2$ in \eqref{e:f eps tilde} (see also \cite{ContiFocardiIurlano2022,ContiFocardiIurlanopgrowth}).
Therefore, $\FailureS=\lambda$ if $q=1$, and $\FailureS=\infty$ otherwise.
Instead, the model proposed in \cite{Wu2017,Wu2018b} corresponds to $\hatf(t):=t^p$ with $p\in (0,\infty)$ in \eqref{e:f eps}.

Under the above quoted choices of $Q$ and $\PotDann$, the $\Gamma(L^1)$-convergence of $\{\Functeps^{(1)}\}_{\eps>0}$ to a cohesive type functional has been addressed in \cite[Theorem~2.1]{ContiFocardiIurlano2016} (cf. below for the explicit expression of the limiting functional, and see \cite{ContiFocardiIurlano2022,ContiFocardiIurlanopgrowth} for the vector-valued geometrically nonlinear setting). We prove here an analogous result for the families
$\{\Functeps^{(1)}\}_{\eps>0}$ and
$\{\Functeps^{(2)}\}_{\eps>0}$, and actually for a broader family of functionals including both those defined above and those used in \cite{Lammen2023,Lammen2025} as particular cases. Indeed, let $\varphi:[0,\infty)\to[0,\infty)$ be a non-decreasing
function, and consider the functionals $\Functeps:L^1(\Omega,\R^2)\times\calA(\Omega)\to[0,\infty]$
defined by
\begin{equation}\label{functeps}
 \Functeps(u,v,A):= \int_A \left( \varphi(\eps f^2(v)) |u'|^2 + \frac{\PotDann(1-v)}{4\eps} + \eps |v'|^2 \right) \dx\,
\end{equation}
if $(u,v)\in H^1(\Omega, \R\times[0,1])$ and $\infty$ otherwise. In particular, in formula \eqref{functeps} above, the functions $[0,1)\mapsto\varphi(\eps f^2(t))$ are extended by continuity to $t=1$ with value $\varphi(\infty)$ for every $\eps>0$.
In the main result below we will assume that $\varphi$ satisfies
\begin{itemize}
\item[(Hp~$3$)] $\varphi\in C^0([0,\infty))$, $\varphi$ is non-decreasing and 
$\varphi(\infty):= \displaystyle{\lim_{t\to\infty}\varphi(t)}\in(0,\infty)$;
\item[(Hp~$4$)] $\varphi^{-1}(0)=\{0\}$, and
$\varphi$ is (right-)differentiable in $0$ with $\varphi'(0^+)\in(0,\infty)$.
\end{itemize}
Now let $\Functu_{\FailureS}:L^1(\Omega)\times \mathcal{A}(\Omega)\to [0,\infty]$ be given by
\begin{equation}\label{e:H}
 \Functu_{\FailureS}(u,A):=
 \begin{cases}
\displaystyle{\int_{A}\hsigmabarra^{**}(|u'|)\dx +(\varphi'(0^+))^{\sfrac12}\FailureS|D^cu|(A) +\int_{J_u\cap A}g(|[u]|)\dd \calH ^0} \; \; & \\
   & \hskip-2cm\textup{if} \; u\in GBV(\Omega) \\
\infty & \hskip-2cm\textup{otherwise} \\
 \end{cases}
\end{equation}
where for $\varsigma\in[0,\infty]$ we define $h_\varsigma:[0,\infty) \to [0,\infty)$ by
\begin{equation}\label{e:hsigma}
  h_\varsigma(t):=\inf_{\tau\in(0,\infty)}
\left\{\varphi\left(\frac1\tau\right)t^2 +\frac{\varsigma^2}{4}\tau\right\}\,,
\end{equation}
(in particular, $h_0(t)=0$ and $h_\infty(t)=\varphi(\infty)t^2$ for every $t\in[0,\infty)$, cf.
Proposition~\ref{p:proprieta hsigma} below),
$\hsigmabarra^{**}$ denotes the convex envelope of $\hsigmabarra$, and
$g:[0,\infty)\to \mathbb{R}$ is defined by
\begin{equation}\label{e:lagiyo}
  g(s):=\inf_{(\gamma,\beta)\in \calulu _s}
  \underbrace{\int_0^1 \Big(\PotDann(1-\beta)
  \big((\varphi'(0^+)f^2(\beta)|\gamma'|^2
  +|\beta'|^2\big)\Big)^{\sfrac12}\dx}_{\calG(\gamma,\beta):=}\,,
\end{equation}
where $\calulu_s$ is defined in \eqref{e:glispaziu}.
Throughout the paper we adopt the convention $0\cdot\infty=0$. Therefore,
if $\Functu_\infty(u)<\infty$ then $|D^cu|(\Omega)=0$ (cf. \eqref{e:H}),
and thus we conclude that $u\in GSBV(\Omega)$ with $u'\in L^2(\Omega)$.

We will prove the following result.
\begin{theorem}\label{t:finale}
Assume (Hp~$1$)-(Hp~$4$), and \eqref{e:FailureS def} hold with $\FailureS\in(0,\infty]$.
Let $\Functeps$ be the functional defined in \eqref{functeps}.
Then, for all $(u,v)\in L^1(\Omega,\R^2)$
\[
\Gamma({L^1})\text{-}\lim_{\eps\to0}\Functeps(u,v)=
F_{\,\FailureS}(u,v)\,,
\]
where, $F_{\,\FailureS}:L^1(\Omega,\R^2)\to[0,\infty]$ is defined by
\begin{equation}\label{F0}
F_{\,\FailureS}(u,v):=
\begin{cases}
\Functu_{\,\FailureS}(u) & \textup{if $v=1$ $\mathcal{L}^1$-a.e. on $\Omega$}\cr
\infty & \textup{otherwise}.
\end{cases}\,
\end{equation}
\end{theorem}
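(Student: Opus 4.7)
The plan is to follow the standard $\Gamma$-convergence blueprint—compactness, $\liminf$ inequality, recovery sequence—building on the strategy of \cite{ContiFocardiIurlano2016} but adapted to the general nonlinearity $\varphi$, the split degradation $f$ built from $(\hatf,Q)$, and the general potential $\PotDann$. Compactness is immediate: if $\sup_\eps\Functeps(u_\eps,v_\eps)<\infty$ then $\int_\Omega\PotDann(1-v_\eps)\dx=O(\eps)$, so $v_\eps\to1$ in measure and, since $v_\eps\in[0,1]$, in $L^1(\Omega)$; this forces the constraint $v=1$ a.e.\ embedded in the definition of $F_{\FailureS}$.

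For the \emph{liminf inequality}, I take $(u_\eps,v_\eps)\to(u,1)$ in $L^1$ with bounded energy and split $\Omega$, via an adaptive threshold on $v_\eps$, into a diffuse region where $v_\eps$ is close to $1$ and a surface region collecting the ``dips''. On the diffuse region the key step is the pointwise change of variable $\tau=Q(1-v_\eps)/\eps$: by (Hp~$1$) one has $\eps f^2(v_\eps)\sim1/\tau$, and by (Hp~$2$) the penalty $\PotDann(1-v_\eps)/(4\eps)$ is close to $\FailureS^{\,2}\tau/4$, so the integrand is bounded below by $\varphi(1/\tau)|u_\eps'|^2+\FailureS^{\,2}\tau/4\ge h_{\FailureS}(|u_\eps'|)$; $L^1$-lower semicontinuity of the convex envelope then yields $\int h_{\FailureS}^{**}(|u'|)\dx$, while the Cantor term $(\varphi'(0^+))^{1/2}\FailureS|D^cu|$ arises naturally from the recession behaviour of $h_{\FailureS}^{**}$ at infinity via a Reshetnyak-type argument. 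On the surface region I would apply the Modica--Mortola inequality $\eps|v_\eps'|^2+\PotDann(1-v_\eps)/(4\eps)\ge\sqrt{\PotDann(1-v_\eps)}|v_\eps'|$ and, on each maximal dip, reparametrize by $v_\eps$; combined with the bulk term $\varphi(\eps f^2(v_\eps))|u_\eps'|^2$ through AM--GM/Cauchy--Schwarz, the accumulated energy rewrites, after an $\eps$-rescaling, as the path functional $\calG(\gamma,\beta)$ for an admissible pair carrying the local jump, so by the very definition of $g$ in \eqref{e:lagiyo} the dip contributes at least $g(|[u](x)|)$; a counting/lower semicontinuity argument sums over dips to the full jump term.

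For the \emph{limsup inequality} I would first construct the recovery sequence when $u$ is piecewise $H^1$ with finitely many jumps, then extend by a density/diagonal argument to general $u\in GBV$ with $\Functu_{\FailureS}(u)<\infty$. Away from the jumps choose $v_\eps(x)$ so that $Q(1-v_\eps(x))/\eps$ is an (approximate) minimizer of \eqref{e:hsigma} at $t=|u'(x)|$; this makes the bulk integrand pointwise close to $h_{\FailureS}(|u'|)$, and a standard convex-combination/truncation trick bridges $h_{\FailureS}$ and $h_{\FailureS}^{**}$ without generating extra energy. Near each jump $x_i$ splice in a near-optimal pair $(\gamma_i,\beta_i)\in\calulu_{|[u](x_i)|}$ from \eqref{e:lagiyo}, rescaled on a window of width $O(\eps)$ and matched to the far-field profile with a cutoff, so that this local region contributes $g(|[u](x_i)|)+o(1)$ and leaves the bulk estimate undisturbed.

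The principal technical obstacle is the lower bound on the surface part: the integrand is not in standard Modica--Mortola form, since the coupling $\varphi(\eps f^2(v))|u'|^2$ is neither multiplicative nor separable and the general $\varphi$ rules out the explicit chain-rule identities used in \cite{ContiFocardiIurlano2016}. Recovering exactly $g(|[u]|)$ from a general dip requires a careful blow-up/reparametrization together with a lower semicontinuity result for weighted path integrals whose weight $\PotDann(1-\beta)$ degenerates at $\beta=1$. A secondary difficulty is treating the regimes $\FailureS\in(0,\infty)$ and $\FailureS=\infty$ on the same footing: in the latter, the $\liminf$ must rigorously exclude any surviving Cantor variation while the recovery sequence must still attain the bulk $\int h_{\infty}^{**}(|u'|)\dx=\varphi(\infty)\int|u'|^2\dx$ without spoiling the jump contribution.
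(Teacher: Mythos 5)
Your blueprint coincides with the one the paper actually follows (compactness; liminf split into a diffuse part via thresholding on $v_\eps$ plus Modica--Mortola and a surface part via blow-up to the path functional; limsup via optimal profiles for $v$ in the bulk and rescaled near-optimal pairs $(\gamma,\beta)$ at the jumps, then density/relaxation), but at the decisive points the sketch stops exactly where the work begins, and the missing ingredients are specific ideas rather than routine checks. First, in the dip analysis you cannot pass from $\varphi(\eps f^2(v_\eps))$ to the linearized weight $\varphi'(0^+)f^2$ appearing in \eqref{e:lagiyo} by a simple Taylor expansion: $\eps f^2(v_\eps)$ is \emph{not} small precisely where $v_\eps$ is close to $1$ (since $f(t)\to\infty$ as $t\to1^-$). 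The paper truncates $v$ at the level $\delta_\eps$ defined by $\eps f^2(\delta_\eps)=\delta$; on $\{v>\delta_\eps\}$ the bulk term is kept as is, and the extra potential contribution is shown to vanish using $\FailureS<\infty$ (cf. \eqref{e:cicolella}), while for $\FailureS=\infty$ this mechanism fails and one must instead modify $u$, replacing it by the absolutely continuous part of $u\chi_{\{v\le\delta_j\}}$ (Proposition~\ref{p:lowerboundBVsfc sigmabarra infinito}). Moreover the blown-up profiles never attain the exact boundary conditions $\beta=1$, $\gamma\in\{0,s\}$ of $\calulu_s$, so ``by the very definition of $g$'' is not available: one needs the relaxed class \eqref{e:glispaziueta} together with the quantitative comparison $0\le g-g_\eta\le 2\lambda_0(\eta)\eta$ (item (vi) of Proposition~\ref{p:lepropdig}), respectively the continuity estimates of Proposition~\ref{p:lepropdig infty} when $\FailureS=\infty$. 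You flag this region as ``the principal technical obstacle'' but do not supply any of these devices, and your appeal to a generic lower semicontinuity result for degenerate weighted path integrals does not by itself close it.

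Second, the upper bound is incomplete in two respects. Your recovery construction only covers (piecewise) Sobolev and SBV data; for $u$ with $D^cu\neq0$ a plain $L^1$-density/diagonal argument is useless unless the energies of the approximants converge, and the ``convex-combination/truncation trick'' does not produce the term $(\varphi'(0^+))^{\sfrac12}\FailureS|D^cu|(\Omega)$ nor the convexification $\hsigmabarra^{**}$. The paper obtains both through the relaxation result of Bouchitt\'e--Braides--Buttazzo, which requires verifying the infimal-convolution identities in \eqref{e:stima relax bou bra but}; these rest on the exact coincidence $g^0(s)=(\hsigmabarra^{**})^\infty(s)=(\varphi'(0^+))^{\sfrac12}\FailureS\, s$, i.e.\ on structural properties of $g$ and $h_{\FailureS}$ (Lipschitz bound and slope at $0$ of $g$, \eqref{e:laaccabroo}) that your sketch never establishes. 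In the same vein, choosing $v_\eps(x)$ pointwise optimal for \eqref{e:hsigma} at $t=|u'(x)|$ generates an uncontrolled $\eps|v_\eps'|^2$ term; the paper avoids this by working with piecewise affine $u$ and partitions of unity, exploiting the monotonicity of $Q$ near the origin. Finally, compactness is not just $v_\eps\to1$: to even write the liminf inequality one must know that a finite-energy limit $u$ lies in $GBV(\Omega)$ (in $GSBV$ with $u'\in L^2$ when $\FailureS=\infty$, which also rules out the Cantor part there), and this comes from the truncation Lemma~\ref{l:suppenergy} combined with $BV$ compactness, a step your proposal silently assumes.
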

The proof of Theorem~\ref{t:finale} follows some of the ideas introduced in \cite{ContiFocardiIurlano2022,ContiFocardiIurlanopgrowth} for the $\Gamma$-convergence analysis of the
geometrically nonlinear counterpart of the model studied in \cite{ContiFocardiIurlano2016} in the scalar case.
Despite this, several nontrivial difficulties have to be overcome due to the generality of the model.

In passing, note that $\Functeps^{(1)}$, $\Functeps^{(2)}$ correspond, respectively, to the choices $\varphi_1(t)=1\wedge t$ and $\varphi_2(t)=\frac t{1+t}$ for every $t\in[0,\infty)$, and $\hatf(t)=t^p$ in the second setting. Therefore, $\varphi_i$, $i\in\{1,2\}$, satisfies (Hp~$3$) and (Hp~$4$), with $\varphi_i(\infty)= \varphi_i'(0)=1$.
Moreover, if $h_\varsigma^{(i)}$ denotes the corresponding function defined in \eqref{e:hsigma}, we have for every $\varsigma\in(0,\infty]$
\begin{equation*}
(h_\varsigma^{(1)})^{**}(t)=h_\varsigma^{(2)}(t)=
(h_\varsigma^{(2)})^{**}(t)=
\begin{cases}
  t^2 \qquad    & \textup{if} \; t\leq \frac{\varsigma}{2} \\
  \varsigma t-\frac{\varsigma^2}{4} & \textup{if} \; t\geq \frac{\varsigma}{2}\,.
  \end{cases}
\end{equation*}
Note that the same bulk energy density is obtained in both cases, without the need of taking the
convexification with the choice $\varphi_2$. We then expect the bulk energy density of the
corresponding $\Gamma$-limit to be convex also in the vector valued setting, contrary to the case
when $\varphi_1$ is chosen (cf. \cite[Section~2.2]{ContiFocardiIurlano2022} and \cite{Colasanto2024}).

In addition, we remark that the dependence of $g$ on $\varphi$ is elementary and explicit.
This claim can be highlighted letting, for instance, $g_1$ be the function corresponding to the choice
$\varphi_1=1\wedge t$ as in \cite{ContiFocardiIurlano2016}, so that \eqref{e:lagiyo}
itself yields for every $s\geq 0$
\begin{equation}\label{e:dependence g varphi}
g(s)=g_1\big((\varphi'(0^+))^{\sfrac12}s\big)\,.
\end{equation}
Note that any other function $\varphi$ with slope equal to $1$ in $t=0$ would work the same.

We point out that the surface energy densities $g$ we obtain can have either a linear or a superlinear behaviour
for small jump amplitudes, the difference between the two cases being encoded by the finiteness or otherwise of
the value of the limit $\FailureS$
(cf. \eqref{e:FailureS def}). The approximation of superlinear surface energy densities slightly departs from the analysis in \cite[Section~7.2]{ContiFocardiIurlano2016}, there the numerator $\hatf$ of the degradation function is $\eps$ dependent in contrast to our approach, and is closer to the approach in \cite{ContiFocardiIurlanopgrowth}.

We underline that another advantage of the introduction in the model of the
function $\varphi$ is that a simple change of the scaling in the functionals $\Functeps$
provide a family of energies including those originally defined by Ambrosio and Tortorelli
in \cite{Ambrosio1992} to approximate the Mumford-Shah energy. Therefore, we provide a unifying
phase-field model for the approximation of Griffith energy in brittle fracture (\cite{Bourdin2008})
and cohesive zone models.
Indeed, let $\gamma_\eps>0$ with $\sfrac{\gamma_\eps}\eps\to\infty$ as $\eps\to 0^+$, and consider the functionals $\Functepstilde:L^1(\Omega,\R^2)\times\calA(\Omega)\to[0,\infty]$ defined by
\begin{equation}\label{functeps brittle}
 \Functepstilde(u,v,A):= \int_A \left( \varphi(\gamma_\eps f^2(v)) |u'|^2 + \frac{\PotDann(1-v)}{4\eps} + \eps |v'|^2 \right) \dx\,
\end{equation}
if $(u,v)\in H^1(\Omega, \R\times[0,1])$ and $\infty$ otherwise.
As a consequence of Theorem~\ref{t:finale} we obtain the following result to state which
it is convenient to introduce the function
$\Psi:[0,1]\to[0,\infty)$ given by
\begin{equation}\label{e:Psi}
\Psi(t):=\int_0^t\PotDann^{\sfrac12}(1-\tau)d\tau\,.
\end{equation}
\begin{theorem}\label{t:finale brittle}
Assume (Hp~$1$)-(Hp~$4$), \eqref{e:FailureS def} hold with $\FailureS\in(0,\infty]$, and that
$\sfrac{\gamma_\eps}\eps\to\infty$ as $\eps\to0^+$.
Let $\Functepstilde$ be the functional defined in \eqref{functeps brittle}.
Then, for all $(u,v)\in L^1(\Omega,\R^2)$
\[
\Gamma({L^1})\text{-}\lim_{\eps\to0}\Functepstilde(u,v)=
\widetilde{F}(u,v)\,,
\]
where, $\widetilde{F}:L^1(\Omega,\R^2)\times\calA(\Omega)\to[0,\infty]$ is defined by
\begin{equation}\label{F0 brittle}
\widetilde{F}(u,v,A):=
\begin{cases}
\displaystyle{\varphi(\infty)\int_{A}|u'|^2\dx +2\Psi(1)\calH^0(J_u\cap A)} \;\; & \\
&\hskip-2cm \textup{if $u\in SBV(\Omega)$, $v=1$ $\mathcal{L}^1$-a.e. on $\Omega$}\\
\infty & \hskip-2cm\textup{otherwise}
\end{cases}\,,
\end{equation}
\end{theorem}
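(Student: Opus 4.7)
The strategy is to prove Theorem~\ref{t:finale brittle} by adapting the ingredients of the proof of Theorem~\ref{t:finale} to the modified scaling $\gamma_\eps/\eps\to\infty$. The qualitative picture is the following: the Modica-Mortola part of $\Functepstilde$ is unchanged, so the surface density of the $\Gamma$-limit should come from the standard Modica-Mortola one-transition optimisation rather than from \eqref{e:lagiyo}, producing the constant $2\Psi(1)$ per jump. In the bulk, $\varphi(\gamma_\eps f^2(v))$ with $\gamma_\eps\gg\eps$ saturates to $\varphi(\infty)$ as soon as $v$ is sufficiently close to $1$ (using $f(t)\to\infty$ as $t\to 1^-$, a consequence of (Hp~$1$) and $Q(0)=0$), collapsing $\hsigmabarra^{**}$ to the pure quadratic $\varphi(\infty)t^2$.

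For the liminf, I would start from $(u_\eps,v_\eps)\to(u,1)$ in $L^1$ with $\liminf\Functepstilde(u_\eps,v_\eps)<\infty$. A Young inequality on the Modica-Mortola terms, exactly as in the proof of Theorem~\ref{t:finale} and as in classical Ambrosio-Tortorelli, gives $u\in GBV(\Omega)$ together with
\[
\liminf_{\eps\to 0}\int_A\Bigl(\frac{\PotDann(1-v_\eps)}{4\eps}+\eps|v_\eps'|^2\Bigr)\dx \ \geq\ 2\Psi(1)\,\calH^0(J_u\cap A)
\]
for every $A\in\calA(\Omega)$. For the bulk contribution I localise to the elastic set $G_\eps^{\eta_\eps}:=\{v_\eps\geq 1-\eta_\eps\}$ with $\eta_\eps\to 0$ chosen slowly enough that $\gamma_\eps f^2(1-\eta_\eps)\to\infty$ (such a choice exists by $f(t)\to\infty$ as $t\to 1^-$ combined with $\gamma_\eps/\eps\to\infty$). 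Then $\varphi(\gamma_\eps f^2(v_\eps))\to\varphi(\infty)$ uniformly on $G_\eps^{\eta_\eps}$ while $\mathcal{L}^1(A\setminus G_\eps^{\eta_\eps})\to 0$, and standard weak lower semicontinuity applied to $u_\eps'\chi_{G_\eps^{\eta_\eps}}$ gives $\liminf\int_A\varphi(\gamma_\eps f^2(v_\eps))|u_\eps'|^2\dx\geq\varphi(\infty)\int_A|u'|^2\dx$. Combining the two bounds yields $u\in SBV(\Omega)$ with $u'\in L^2(\Omega)$ and the liminf inequality.

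For the limsup, a standard density argument reduces to $u$ smooth outside a finite jump set $\{x_1,\dots,x_N\}$. I take $v_\eps$ to be (the truncation of) the optimal Modica-Mortola profile for $\PotDann(1-\cdot)$ centred at each $x_i$ and equal to $1$ far from the jumps, so that the Modica-Mortola part of $\Functepstilde$ converges to $2\Psi(1)\calH^0(J_u)$. I set $u_\eps=u$ outside intervals $I_\eps^i=(x_i-\alpha_\eps,x_i+\alpha_\eps)$ of width $\alpha_\eps\ll\eps$ and linearly interpolate the jump across $I_\eps^i$. Outside $\bigcup_i I_\eps^i$, $v_\eps\to 1$ and $\gamma_\eps f^2(v_\eps)\to\infty$, so dominated convergence yields $\int\varphi(\gamma_\eps f^2(v_\eps))|u_\eps'|^2\dx\to\varphi(\infty)\int|u'|^2\dx$. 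Inside $I_\eps^i$, I use $\varphi(t)\leq\varphi'(0^+)t+o(t)$ near $0$ from (Hp~$4$), together with the near-linear behaviour of $v_\eps$ near $x_i$, to estimate the bulk contribution by $\varphi'(0^+)\gamma_\eps[u(x_i)]^2\alpha_\eps^{-2}\int_{|y|<\alpha_\eps}f^2(v_\eps(x_i+y))\dd y$, which one forces to vanish by a careful choice of $\alpha_\eps$.

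The hardest step is precisely this last tuning of $\alpha_\eps$: the rate at which the integrand $f^2(v_\eps)=\hatf(v_\eps)/Q(1-v_\eps)$ vanishes on $I_\eps^i$ depends on the modulus of continuity of $\hatf$ at $0$, which (Hp~$1$) does not quantify. Controlling this uniformly using only $\varphi'(0^+)<\infty$ together with $\gamma_\eps/\eps\to\infty$, and then diagonalising to pass from smooth approximants back to general $u\in SBV(\Omega)$ with $u'\in L^2(\Omega)$, is where most of the technical work concentrates. An alternative worth exploring is to deduce the result directly from Theorem~\ref{t:finale} via a comparison of $\Functepstilde$ with $\Functeps$ evaluated at scale $\gamma_\eps$ plus a Modica-Mortola correction at scale $\eps$; the comparison is not monotone in either direction, so this route also requires care with the cross terms.
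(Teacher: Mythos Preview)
Your direct approach is workable in outline, but you have misjudged both the hardest and the easiest parts, and in doing so you have missed the paper's route, which is essentially a one-line corollary of Theorem~\ref{t:finale}.

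The key point you overlooked concerns the comparison you dismiss at the end. You try to compare $\Functepstilde$ with $\Functeps$ ``evaluated at scale $\gamma_\eps$'', i.e.\ with the Modica--Mortola terms rescaled as well, and correctly note that this is not monotone. But the right auxiliary family keeps the Modica--Mortola part at scale $\eps$ and only changes the degradation: for each fixed $j\in\N$ set
\[
\Functeps^{(j)}(u,v):=\int_\Omega\Bigl(\varphi(j\eps f^2(v))|u'|^2+\frac{\PotDann(1-v)}{4\eps}+\eps|v'|^2\Bigr)\dx\,.
\]
Since $\gamma_\eps/\eps\to\infty$, for $\eps$ small one has $\gamma_\eps\geq j\eps$, and monotonicity of $\varphi$ gives the \emph{pointwise} inequality $\Functepstilde\geq\Functeps^{(j)}$. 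Theorem~\ref{t:finale} applies verbatim to $\Functeps^{(j)}$ (a reparametrisation $\tau\mapsto j\tau$ shows its bulk density is $(h^{(j)}_{\FailureS})^{**}$ with $h^{(j)}_{\FailureS}(t)=\inf_\tau\{\varphi(1/\tau)t^2+\tfrac{\FailureS^2}{4}j\tau\}$, its surface density is $g^{(j)}(s)=g_1(j^{1/2}(\varphi'(0^+))^{1/2}s)$ by \eqref{e:dependence g varphi}, and its Cantor coefficient is $(j\varphi'(0^+))^{1/2}\FailureS$). Letting $j\to\infty$, the Cantor coefficient diverges (forcing $u\in GSBV$), $g^{(j)}\to 2\Psi(1)\chi_{(0,\infty)}$ by \eqref{e:Lafigrandeinfi}, and $h^{(j)}_{\FailureS}\to\varphi(\infty)t^2$. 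This gives the liminf with no further work.

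For the limsup, the difficulty you call ``hardest'' evaporates if you make $v_\eps$ equal to $0$ (not merely small) on the interpolation interval $I_\eps^i$: then $\varphi(\gamma_\eps f^2(0))=\varphi(0)=0$ regardless of how large $\gamma_\eps$ is, and the bulk contribution of the linear bridge is identically zero. Taking $|I_\eps^i|=o(\eps)$ makes the extra potential cost $\tfrac{\PotDann(1)}{4\eps}|I_\eps^i|$ vanish, and the remaining Modica--Mortola transition from $0$ to $1$ on each side gives $2\Psi(1)$. No tuning against the modulus of continuity of $\hatf$ is needed.

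Your direct liminf (localising to $\{v_\eps\geq 1-\eta_\eps\}$ and invoking weak lower semicontinuity) can be made to work, but it essentially reproduces the slicing/coarea machinery of Lemma~\ref{l:suppenergy} and Proposition~\ref{p:CFI_sci}, and you still need the supremum-of-measures argument to add the bulk and surface bounds on the same sequence. That is a lot of repeated effort for a result that, once Theorem~\ref{t:finale} is in hand, follows from the monotone comparison above.
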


Finally, the role of the assumptions $\{\varphi(\infty),\varphi'(0^+)\}\subset(0,\infty)$ are analysed
in Section~\ref{ss:other regimes} by discussing the complementary cases.

Let us conclude the comments to Theorem~\ref{t:finale} by remarking that the one-dimensional setting is not mandatory for the
$\Gamma$-convergence analysis. Indeed, results similar to those contained in \cite{ContiFocardiIurlano2022} can be obtained in the vector-valued setting (see \cite{Colasanto2024}).
In this respect, \cite[Corollary~3.5]{ContiFocardiIurlano2022} establishes that the surface energy density of the $\Gamma$-limit depends only on the modulus of the jump via the one-dimensional surface energy density obtained in \cite{ContiFocardiIurlano2016}
in the so called isotropic setting, roughly speaking if the (linear recession function
of the) bulk energy density in the phase-field model is the modulus squared of the deformation gradient.
We expect the same to be true in this generalized setting, so that the problem of assigning a specific cohesive law studied in the second part paper \cite{Alessi2025b} can be also addressed for some vector-valued models, as well.

\subsection{Contents of this Paper}

In Section~\ref{s:model} we have introduced the phase-field model under study and the main
$\Gamma$-convergence result has been stated (cf. Theorem~\ref{t:finale}).
The proof of Theorem~\ref{t:finale} is given in Section~\ref{s:gammaconv compactness} where the compactness properties of the phase-field functionals and the $\Gamma$-convergence with the addition of Dirichlet boundary conditions are also studied (cf. Theorems~\ref{t:cptness sigma finito} and \ref{t:cptness sigma infinito}, Theorem~\ref{t:Dirichlet}, respectively). In particular, convergence of minimizers in that setting is established, so that the choice of the $L^1$ topology is fully justified by the latter result.
All the needed technical results to prove the above mentioned statements are collected in Section~\ref{s:technical results}. The approximation of brittle type energies is discussed in Section~\ref{ss:other regimes}, together
with other consequences of Theorem~\ref{t:finale}.

\section{Technical results}\label{s:technical results}

\subsection{Preliminaries}
We adopt standard notation for Sobolev, $BV$, $GBV$ spaces for which we refer to
\cite{AFP}. Let us only recall that if $u\in L^1\cap G(S)BV(\Omega)$, setting
\begin{equation}\label{e:truncations}
    u^M:=(u\wedge M)\vee(-M)\,,
\end{equation}
for $M\in\N$ and $u\in L^1(\Omega)$,
then $u^M\in(S)BV(\Omega)$, $u^M\to u$ in $L^1(\Omega)$ as $M\to\infty$,
$J_u=\cup_{M\in\N}J_{u^M}$,
$[u](x)=[u^M](x)$ for $M$ sufficiently large, $u'=(u^M)'$ $\calL^1$-a.e. on $\{|u|\le M\}$, and
$|D^cu|(A):=\sup_M|D^cu^M|(A)$ for all $A\in\calA(\Omega)$
\cite[Section~4.5]{AFP}. Here, $u'$ denotes the approximate gradient of $u$, $J_u$
is the set of approximate jump points, and $[u]$ is the approximate jump (cf. \cite[Definitions~4.30, 4.31, Theorem~4.34]{AFP})

For the standard theory and results on $\Gamma$-convergence
we refer to \cite{Dalmaso1993, Braides1998}.

In what follows when taking the right-(left-)limit of a monotone function $\psi:(a,b)\to\R$,
where $a\in[-\infty,\infty)$ and $b\in(-\infty,\infty]$, in a point $t_0\in[a,b]$
we will simply write $\psi(t_0^+)$ ($\psi(t_0^-)$) if $t_0$ is finite, and $\psi(\pm\infty)$ otherwise.

\subsection{Technical results for the diffuse part}

We establish in the next statement several useful properties of the functions $h_\varsigma$ for functions $\varphi$ satisfying slightly more general assumptions than (Hp~$3$) and (Hp~$4$).
In particular, if $\varphi(\infty)=\infty$ we set by definition $h_\infty(t)=\infty$ if $t>0$ and
equal to $0$ if $t=0$.
In what follows with $\varphi'(0^+)=\infty$ we mean that the limit of the difference quotient of $\varphi$ in $t=0$ exists and it is not finite.
\begin{proposition}\label{p:proprieta hsigma}
Let $\varphi\in C^0([0,\infty))$, be positive, non-decreasing with $\varphi^{-1}(\{0\})=0$.
Let $h_\varsigma$ be the function defined in \eqref{e:hsigma}, then
\begin{itemize}
\item[(i)] Let $\varsigma\in(0,\infty)$. Then,
$h_\varsigma(0)=0$, $h_\varsigma\in C^0([0,\infty))$, $h_\varsigma$ is strictly increasing, 
$h_0(t)<h_{\varsigma'}(t)\leq h_\varsigma(t)\leq h_\infty(t)$ if $0\leq\varsigma'<\varsigma<\infty$ for every $t>0$. Moreover, for every $t\geq0$ 
we have
$\displaystyle{\lim_{\varsigma'\to\varsigma}h_{\varsigma'}(t)=h_\varsigma(t)}$, and
\begin{equation}\label{e:hsigmacvxapprox}
  \lim_{\varsigma'\to\varsigma}h^{**}_{\varsigma'}(t)=h^{**}_\varsigma(t)\,.
\end{equation}
\item[(ii)] Let $\varsigma\in(0,\infty)$. If $\varphi$ has right derivative $\varphi'(0^+)\in[0,\infty]$ then
\begin{equation}\label{e:laaccabroo}
\lim_{t\to\infty}\frac{h_\varsigma^{**}(t)}t=
\lim_{t\to\infty}\frac{h_\varsigma(t)}t=
(\varphi'(0^+))^{\sfrac12}\varsigma\,.
 \end{equation}
If in addition $\varphi'(0^+)\in[0,\infty)$, then there is $\xi_\varsigma>0$ such that for every $t\in [0,\infty)$
\begin{equation}\label{minorconlin}
((\varphi'(0^+))^{\sfrac12}\varsigma t -\xi_\varsigma)\vee 0\leq h_\varsigma^{**}(t)\leq (\varphi'(0^+))^{\sfrac12}\varsigma t \,.
\end{equation}
Moreover, if $\varphi'(0^+)\in(0,\infty]$
\begin{equation}\label{e:laaccabrodo}
\lim_{t\to0}\frac{h_\varsigma^{**}(t)}{t^2}=\lim_{t\to0}\frac{h_\varsigma(t)}{t^2}=\varphi(\infty)\in
(0,\infty]\,.
 \end{equation}
\item[(iii)] Let $\varsigma=\infty$. If $\varphi(\infty)\in(0,\infty)$ then
$h_\infty(t)=\varphi(\infty)t^2$ for every $t\in [0,\infty)$. If, moreover, $\varphi'(0^+)\in(0,\infty]$ then
for every $t\in [0,\infty)$
\begin{equation}\label{e:hinftyapprox}  \lim_{\varsigma\to\infty} h^{**}_\varsigma(t)=\lim_{\varsigma\to\infty} h_\varsigma(t)
=h_\infty(t)\,.
\end{equation}
\item[(iv)] Let $\varsigma=0$, $h_0(t)=0$ for every $t\in [0,\infty)$
and
\begin{equation}\label{e:h0approx}
  \lim_{\varsigma\to0^+}h^{**}_\varsigma(t)
  =\lim_{\varsigma\to0^+} h_\varsigma(t)
  =h_0(t)\,.
\end{equation}
\end{itemize}
\end{proposition}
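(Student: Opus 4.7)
The plan is first to reformulate the defining minimization through the change of variable $\sigma=1/\tau$, rewriting
\[
h_\varsigma(t)=\inf_{\sigma>0}\Big\{\varphi(\sigma)\,t^2+\frac{\varsigma^2}{4\sigma}\Big\},
\]
which is more convenient both for the asymptotic analysis in $t$ and for passing to the limit in $\varsigma$. I would then address the four items in order of increasing technical content.

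For items (i), (iii) and (iv), most properties follow by direct inspection of the infimum. The vanishing $h_\varsigma(0)=0$ and the monotonicity chain $h_0(t)\le h_{\varsigma'}(t)\le h_\varsigma(t)\le h_\infty(t)$ are immediate from the explicit dependence on $\varsigma$ of the second integrand. Strict monotonicity of $h_\varsigma$ in $t$ for $\varsigma>0$ follows from the strict positivity of $\varphi$ on $(0,\infty)$, and continuity in $t$ from the usual sandwich between upper semicontinuity, inherent to an infimum of continuous functions, and lower semicontinuity obtained by a diagonal selection of quasi-minimizers. Pointwise convergence $h_{\varsigma'}\to h_\varsigma$ as $\varsigma'\to\varsigma$ is monotone and elementary. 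For the convex envelope convergence \eqref{e:hsigmacvxapprox}, I would exploit that the family $\{h_{\varsigma'}^{**}\}$ is itself monotone in $\varsigma'$, so its pointwise limit $\overline h$ is convex, satisfies $\overline h\le\lim h_{\varsigma'}=h_\varsigma$, and hence by maximality of the convex envelope $\overline h\le h_\varsigma^{**}$, with the reverse inequality inherited from the monotone ordering of the envelopes themselves. Item (iii) is handled by noting that for $\varsigma=\infty$ any feasible $\sigma$ must diverge to avoid an infinite penalty, forcing $h_\infty(t)=\lim_{\sigma\to\infty}\varphi(\sigma)\,t^2=\varphi(\infty)t^2$; the convergences \eqref{e:hinftyapprox} and \eqref{e:h0approx} are monotone in $\varsigma$ and handled as for \eqref{e:hsigmacvxapprox}.

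The heart of the proof is item (ii). Writing $m:=(\varphi'(0^+))^{\sfrac12}\varsigma$, for the upper bound in \eqref{e:laaccabroo} I would test with $\sigma=\alpha/t$ with $\alpha=\varsigma/(2(\varphi'(0^+))^{\sfrac12})$ and use $\varphi(\sigma)/\sigma\to\varphi'(0^+)$ as $\sigma\to 0^+$ to obtain $h_\varsigma(t)\le mt+o(t)$. For the matching lower bound I would fix $\eps>0$ and split the infimum into $\sigma\le\sigma_\eps$, where $\varphi(\sigma)/\sigma\ge\varphi'(0^+)-\eps$ by the right-differentiability at zero and AM--GM gives $\varphi(\sigma)t^2+\varsigma^2/(4\sigma)\ge\varsigma t\sqrt{\varphi'(0^+)-\eps}$, and $\sigma>\sigma_\eps$, where $\varphi(\sigma)t^2\ge\varphi(\sigma_\eps)t^2$ dominates the first bound for $t$ large, and then let $\eps\to 0$. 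The analogous limit for $h_\varsigma^{**}(t)/t$ follows from the sandwich $\max(0,(m-\eps)t-C_\eps)\le h_\varsigma^{**}(t)\le h_\varsigma(t)$, the left-hand side being convex and, for $C_\eps$ large enough, bounded above by $h_\varsigma$, hence by $h_\varsigma^{**}$ via maximality. The small-$t$ asymptotic \eqref{e:laaccabrodo} is symmetric: testing with $\sigma\to\infty$ yields $h_\varsigma(t)\le\varphi(\infty)t^2+\varsigma^2/(4\sigma)$, while a near-optimal $\sigma$ for small $t$ must be large (otherwise $\varsigma^2/(4\sigma)$ is not $o(t^2)$), so $\varphi(\sigma)\to\varphi(\infty)$ and the matching lower bound is recovered.

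The principal obstacle is the sharper linear-below estimate \eqref{minorconlin}, which demands that the excess $mt-h_\varsigma^{**}(t)$ remain uniformly bounded in $t$, a statement strictly stronger than the asymptotic \eqref{e:laaccabroo}. It amounts to producing an affine minorant of $h_\varsigma$ of slope exactly $m$; by a Legendre-type exchange of infima this is equivalent to the finiteness of $\sup_{\sigma>0}[m^2/(4\varphi(\sigma))-\varsigma^2/(4\sigma)]$, which is in turn controlled by a quantitative version of $\varphi(\sigma)/\sigma\to\varphi'(0^+)$. For the canonical choices $\varphi_1(t)=1\wedge t$ and $\varphi_2(t)=t/(1+t)$ the supremum equals $\varsigma^2/4$, and one reads off $\xi_\varsigma=\varsigma^2/4$; under the general hypotheses (Hp~$3$)--(Hp~$4$) the existence of some finite $\xi_\varsigma$ follows from the differentiability at $0$. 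The matching upper bound $h_\varsigma^{**}(t)\le mt$ is obtained separately and for free from the fact that $h_\varsigma^{**}$ is a non-decreasing convex function vanishing at $0$ with asymptotic slope $m$, so the ratio $h_\varsigma^{**}(t)/t$ is non-decreasing to $m$ and hence bounded by $m$.
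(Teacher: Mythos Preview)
Your plan is largely sound and close in spirit to the paper's proof: the reparametrization $\sigma=1/\tau$, the direct manipulation of the infimum for (i), the AM--GM/splitting argument for \eqref{e:laaccabroo}, and the near-optimal $\sigma$ analysis for \eqref{e:laaccabrodo} all match what the paper does (the paper phrases the continuity-in-$\varsigma$ via the explicit scaling $h_\varsigma(t)\le(\varsigma/\varsigma')^2h_{\varsigma'}(t)$, which you should also invoke to close your monotone argument for \eqref{e:hsigmacvxapprox} in the increasing direction $\varsigma'\uparrow\varsigma$).

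There is, however, one genuine gap. Your treatment of \eqref{e:hinftyapprox} by ``monotone in $\varsigma$ and handled as for \eqref{e:hsigmacvxapprox}'' does not close. Monotonicity gives $h_\varsigma^{**}\uparrow\overline h$ with $\overline h$ convex and $\overline h\le h_\infty$, but you have no analogue of the scaling inequality to force $\overline h\ge h_\infty$: for finite $\varsigma<\varsigma'$ one has $h_{\varsigma'}\le(\varsigma'/\varsigma)^2h_\varsigma$, which is useless as $\varsigma'\to\infty$. The paper deals with this by passing through the Legendre transform twice: it shows $h_\varsigma^*\to h_\infty^*$ pointwise using that the family $(h_\varsigma(\cdot)-t\cdot)_{\varsigma\ge\varsigma_t}$ is equicoercive (this is where the linear lower bound with slope growing in $\varsigma$ is used), and then that $h_\varsigma^{**}\to h_\infty^{**}=h_\infty$ by a second equicoercivity argument based on $h_\varsigma^*\ge h_\infty^*=\tfrac{t^2}{4}$. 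You should either reproduce this or provide an alternative lower bound on $h_\varsigma^{**}$ whose quadratic region fills $[0,\infty)$ as $\varsigma\to\infty$.

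On \eqref{minorconlin}: your Legendre reformulation is correct, and your identification of this as the delicate point is apt. But the sentence ``under the general hypotheses (Hp~$3$)--(Hp~$4$) the existence of some finite $\xi_\varsigma$ follows from the differentiability at $0$'' is an assertion, not an argument: finiteness of $\sup_{\sigma>0}\big[\varphi'(0^+)/\varphi(\sigma)-1/\sigma\big]$ as $\sigma\to0^+$ requires a one-sided quantitative bound $\varphi(\sigma)\ge\varphi'(0^+)\sigma-O(\sigma^2)$, which mere existence of $\varphi'(0^+)$ does not give. The paper itself is equally terse here, deducing \eqref{minorconlin} in one line from convexity of $h_\varsigma^{**}$ and $h_\varsigma^{**}(0)=0$; note that in all subsequent uses only the upper bound $h_\varsigma^{**}(t)\le(\varphi'(0^+))^{1/2}\varsigma t$ and the asymptotic \eqref{e:laaccabroo} (or the $(m-\eps)$-slope lower bound) are actually needed, both of which your argument does supply.
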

\begin{proof} \noindent{\bf Step~1:} Proof of item (i).

It is clear that $h_\varsigma(0)=0$ for every $\varsigma\in[0,\infty]$, and moreover that $h_0(t)=0$ for every $t\geq 0$ as $\varphi(0)=0$. Moreover, by the very definition, if
$\FailureS=\infty$ then $h_\infty(t)=\varphi(\infty)t^2$ for every $t\geq 0$.
Note that the infimum in the definition of $h_\varsigma(t)$, $t>0$, is actually a minimum by the continuity and monotonicity of $\varphi$, and thus $h_\varsigma(t)>0=h_0(t)$.
Denote by $\tau_{\varsigma,t}>0$ a minimizer, then $\tau_{\varsigma,t}\in[0,\frac{4}{\varsigma^2}\varphi(\sfrac1t)t^2+t]$ by choosing $\tau=t$ in the very definition of $h_\varsigma(t)$.
Using $\tau_{\varsigma,t}$ as a test in the definition of $h_\varsigma(s)$ for $s<t$
yields $h_\varsigma(s)<h_\varsigma(t)$.

To establish the equality $\displaystyle{\lim_{\varsigma'\to\varsigma}h_{\varsigma'}(t)=h_{\varsigma}(t)}$ for
$\varsigma>0$ (the cases $\varsigma\in\{0,\infty\}$ will be discussed afterwards) it is sufficient to note that for every $\varsigma,\varsigma'\in(0,\infty)$ with $\varsigma'<\varsigma$ and $t\geq 0$ we have by the very definition of $h_\varsigma$
\begin{equation}\label{e:hsigma vs hsigmaprimo}
 h_{\varsigma'}(t)\leq h_\varsigma(t)\leq 
 \left(\frac{\varsigma}{\varsigma'}\right)^2h_{\varsigma'}(t)\,.
\end{equation}
The latter formula also implies the validity of \eqref{e:hsigmacvxapprox} (the fact that
$h^{**}_\varsigma(t)<\infty$ is a consequence of \eqref{minorconlin} that will be proved below).

By definition $h_\varsigma$ is upper semicontinuous, thus its continuity in $0$ follows from its non negativity and $h_\varsigma(0)=0$. To establish the continuity of $h_\varsigma$ in $t>0$, it is then sufficient to show its left continuity, and actually that
$\displaystyle{\lim_{s\to t^-} h_\varsigma(s)\geq h_\varsigma(t)}$ by monotonicity.
Since by the very definition of $h_\varsigma$ for every $s\in(0,t)$ we have
\[
h_\varsigma(t)\leq
\left(\frac{t}{s}\right)^2h_\varsigma(s)\,,
\]
the conclusion then follows. Actually, the latter estimate implies that $h_\varsigma\in \mathrm{Lip}_{\mathrm{loc}}((0,\infty))$, with Lipschitz constant on any interval $[a,b]\subset(0,\infty)$ estimated by $\frac{2b}{a^2}h_\varsigma(b)$.

\noindent{\bf Step~2:} Proof of item (ii). 

Assume first that $\varphi'(0^+)\in[0,\infty)$.
Let $\varsigma>0$ and $t>0$ be fixed, and let $\eta_{\varsigma,t}>0$ be such that
$\varphi(\frac1{\eta_{\varsigma,t}})t^2=\frac{\varsigma^2}{4}\eta_{\varsigma,t}$, its existence  follows from the continuity and monotonicity of $\varphi$, then
\[
h_\varsigma(t)\leq(\varphi(\sfrac1{\eta_{\varsigma,t}})\eta_{\varsigma,t})^{\sfrac12}\,
\varsigma t\,.
\]
In particular, it easy to check that $\eta_{\varsigma,t}\to\infty$ as $t\to\infty$, so that
\[
\limsup_{t\to\infty}\frac{h_\varsigma^{**}(t)}t\leq
\limsup_{t\to\infty}\frac{h_\varsigma(t)}t\leq
(\varphi'(0^+))^{\sfrac12}\varsigma\,.
\]
If $\varphi'(0^+)=0$ we are done, otherwise to prove the opposite inequality
we assume first $\varphi$ to be bounded. Then $\sfrac{h_\varsigma(t)}{t}$ is upper bounded on $(0,\infty)$,
as by testing the definition of $h_\varsigma(t)$ with $\tau=t$ itself yields
\[
\frac{h_\varsigma(t)}{t}\leq \varphi(\sfrac1t)t+\frac{\varsigma^2}{4},
\]
and the claim follows being $\varphi$ continuous, bounded and differentiable in $t=0$.
Thus, $\tau_{\varsigma,t}\to\infty$ as $t\to\infty$, in turn implying
\[
\frac{h_\varsigma(t)}{t}=\varphi(\sfrac 1{\tau_{\varsigma,t}})t+\frac{\varsigma^2}{4}\frac{\tau_{\varsigma,t}}{t}
\geq(\varphi(\sfrac 1{\tau_{\varsigma,t}})\tau_{\varsigma,t})^{\sfrac12}\varsigma\,,
\]
and we may conclude
\[
\liminf_{t\to\infty}\frac{h_\varsigma(t)}t\geq
(\varphi'(0^+))^{\sfrac12}\varsigma\,.
\]
Hence, for every $\eps\in(0,(\varphi'(0^+))^{\sfrac12}\varsigma)$ there is  $\xi_\eps\geq 0$ such that  $\zeta(t):=0\vee(((\varphi'(0^+))^{\sfrac12}\varsigma-\eps) t-\xi_\eps)\leq h_\varsigma(t)$ for all $t\geq 0$.
In particular, $h_\varsigma^{**}\geq \zeta$, and thus
\[
\liminf_{t\to\infty}\frac{h_\varsigma^{**}(t)}t\geq
(\varphi'(0^+))^{\sfrac12}\varsigma-\eps\,,
\]
and \eqref{e:laaccabroo} for $h_\varsigma^{**}$ follows at once by letting $\eps\to 0$.
If $\varphi$ is not bounded, consider $\varphi\wedge j$ and $h_{\varsigma,j}$ the corresponding
function in \eqref{e:hsigma}. Then using \eqref{e:laaccabroo} for $h_{\varsigma,j}$ yields
${\displaystyle{\liminf_{t\to0}\textstyle{\frac{h_\varsigma^{**}(t)}{t}}}}\geq
{\displaystyle{\lim_{t\to0}\textstyle{\frac{h_{\varsigma,j}^{**}(t)}{t}}= (\varphi'(0^+))^{\sfrac12}\varsigma}}$,
and thus the conclusion.

If $\varphi'(0^+)=\infty$, there is a diverging sequence of integers $k_j\in\N$ such that
the connected component of $\{t\in(0,\infty):\,\varphi(t)>k_jt\}$ whose closure contains
the origin is bounded. Then consider the function $\varphi_j$ equal
to $k_j t$ on such a set and to $\varphi$ otherwise, and let $h_{\varsigma,j}$ be the
corresponding function in \eqref{e:hsigma}. Using \eqref{e:laaccabroo} for $h_{\varsigma,j}$ gives
${\displaystyle{\liminf_{t\to\infty}\textstyle{\frac{h_\varsigma^{**}(t)}{t}}\geq (k_j)^{\sfrac12}\varsigma}}$,
and thus \eqref{e:laaccabroo} for $h_\varsigma$ follows by letting $j\to\infty$.

The growth conditions in \eqref{minorconlin} are then a conseguence of the convexity of $h_\varsigma^{**}$,
and of the equality $h_\varsigma^{**}(0)=h_\varsigma(0)=0$.

We prove next \eqref{e:laaccabrodo}.
Assume first $\varphi(\infty)\in(0,\infty)$, and note that being $h_\varsigma\leq h_\infty$ we have
\[
\limsup_{t\to0}\frac{h_\varsigma^{**}(t)}{t^2}\leq
\limsup_{t\to0}\frac{h_\varsigma(t)}{t^2}\leq\varphi(\infty)\,.
\]
In addition, taking $\tau=t^3$ in the definition of $h_\varsigma(t)$, the monotonicity of $\varphi$ in
(Hp~$3$) yields
\[
\frac{h_\varsigma(t)}{t^2}=\varphi(\sfrac 1{\tau_{\varsigma,t}})+\frac{\varsigma^2}{4}\frac{\tau_{\varsigma,t}}{t^2}\leq
\varphi(\infty)+\frac{\varsigma^2}{4}t\,,
\]
from which we infer $\tau_{\varsigma,t}\leq\frac{4\varphi(\infty)}{\varsigma^2}t^2+t^3$. As, for every $\eps\in(0,\varphi(\infty))$ there is $\delta_\eps>0$ such that $\varphi(\sfrac 1\tau)\geq\varphi(\infty)-\eps$
if $\tau\in(0,\delta_\eps)$, there is $t_{\varsigma,\eps}>0$,
such that for $t\in(0,t_{\varsigma,\eps}]$
\[
h_\varsigma(t)\geq\varphi(\sfrac 1{\tau_{\varsigma,t}})t^2\geq (\varphi(\infty)-\eps)t^2\,.
\]
In particular, from this it is easy to infer the equality for the limit of $h_\varsigma$ in \eqref{e:laaccabrodo}.
Next, set $\eta_{\varsigma,\eps}:=\inf_{(t_{\varsigma,\eps},\infty)}\frac{h_\varsigma(t)-h_\varsigma(t_{\varsigma,\eps})}{t-t_{\varsigma,\eps}}$.
Thanks to \eqref{e:laaccabroo}, to the strict monotonicity and to the local Lipschitz continuity of $h_\varsigma$, we may assume $\eta_{\varsigma,\eps}>0$, up to substituting $t_{\varsigma,\eps}$ with a slightly smaller value $t_{\varsigma,\eps}'$ such that $h_\varsigma$ is differentiable on $t_{\varsigma,\eps}'$ and $h_\varsigma'(t_{\varsigma,\eps}')>0$. Therefore, $h_\varsigma\geq\psi_{\varsigma,\eps}$, where
\[
\psi_{\varsigma,\eps}(t):=\begin{cases}
(\varphi(\infty)-\eps)t^2 & \text{if $t\in[0,t_{\varsigma,\eps}]$}\\
(\varphi(\infty)-\eps)t_{\varsigma,\eps}^2 +
\eta_{\varsigma,\eps} (t-t_{\varsigma,\eps})
& \text{if $t>t_{\varsigma,\eps}$}
    \end{cases}\,,
\]
in turn implying $h^{**}_\varsigma\geq \psi_{\varsigma,\eps}^{**}$, where
\[
\psi_{\varsigma,\eps}^{**}(t)=\begin{cases}
(\varphi(\infty)-\eps)t^2 & \textrm{if $t\in[0,\frac{\eta_{\varsigma,\eps}}{2(\varphi(\infty)-\eps)}]$}\\
\eta_{\varsigma,\eps} t - \frac{\eta_{\varsigma,\eps}^2}{4(\varphi(\infty)-\eps)} & \textrm{if $t\in(\frac{\eta_{\varsigma,\eps}}{2(\varphi(\infty)-\eps)},\infty)$}
\end{cases}\,
\]
if $\eta_{\varsigma,\eps}<2(\varphi(\infty)-\eps)t_{\varsigma,\eps}$, and
$\psi_{\varsigma,\eps}^{**}=\psi_{\varsigma,\eps}$ otherwise.
In any case, from this it is easy to infer the equality for the limit of $h^{**}_\varsigma$ in \eqref{e:laaccabrodo}.
Finally, if $\varphi(\infty)=\infty$, consider $\varphi\wedge j$ and $h_{\varsigma,j}$ the corresponding function in \eqref{e:hsigma}. Then using \eqref{e:laaccabrodo} for $h_{\varsigma,j}$ yields
${\displaystyle{\liminf_{t\to0}\textstyle{\frac{h_\varsigma^{**}(t)}{t^2}}\geq j}}$, and the conclusion follows by letting
$j\to\infty$.

\noindent{\bf Step~3:} Proof of item (iii).

Let $t>0$, recalling that $\tau_{\varsigma,t}\geq0$ is a minimum point in the definition of $h_\varsigma(t)$,
from $h_\varsigma(t)\leq h_\infty(t)$ we conclude that $\tau_{\varsigma,t}\to 0^+$ as $\varsigma\to\infty$,
and thus $h_\varsigma(t)\to h_\infty(t)$ as $\varsigma\to\infty$.

To establish \eqref{e:hinftyapprox} for $h_\varsigma^{**}$, let $t>0$ (as $h_\varsigma^{**}(0)=h_\infty(0)=0$) and note that by definition
 $h^*_\varsigma(t)=-\inf\{h_\varsigma(s)-ts: s\in[0,\infty)\}$. Since $(h_\varsigma(\cdot)-t\cdot)_{\varsigma\geq0}$ is equicoercive for $\varsigma\geq\varsigma_t>t(\varphi'(0^+))^{-\sfrac12}$ thanks to \eqref{minorconlin}, the pointwise convergence of $h_\varsigma$ to $h_\infty$ and the fundamental theorem of $\Gamma$-convergence imply that $h^*_\varsigma(t)\to h^*_\infty(t)$
 as $\varsigma\to\infty$. Moreover, the family $(h^*_\varsigma)_{\varsigma\geq 0}$ is non-increasing with
 $h^*_\varsigma(t)\geq h^*_\infty(t)=\frac{t^2}{4}$, and thus $(h^*_\varsigma(\cdot)-s\cdot)_{\varsigma\geq0}$ is equicoercive, and again the pointwise convergence of $(h^*_\varsigma)_{\varsigma\geq 0}$ to $h^*_\infty$ implies $h^{**}_\varsigma(s)\to h^{**}_\infty(s)=h_\infty(s)$ for every $s\geq 0$.

\noindent{\bf Step~4:} Proof of item (iv).

The proof of \eqref{e:h0approx} is immediate by choosing $\tau=\sfrac1{\varsigma}$. Indeed, we have $h_\varsigma(t)\leq\varphi(\varsigma)t^2+\frac{\varsigma}2$, so that \eqref{e:h0approx} follows at once
by letting $\varsigma\to0^+$ by (Hp~$2$).
%
%
\end{proof}
\begin{remark}
We point out that in case $\varphi'(0^+)=\infty$ and $\varsigma\in(0,\infty)$, $h_{\varsigma}^{**}$
does not necessarily coincide with $\varphi(\infty)t^2$. Indeed, taking $\varphi(t)=t^\alpha\wedge 1$, $\alpha\in(0,1)$, an explicit calculation yields for every $t\geq 0$ the identity
$h_{\varsigma}^{**}(t)=(t^2\wedge(1+\alpha)(\frac{\varsigma^2}{4\alpha})^{\frac{\alpha}{1+\alpha}}t^{\frac{2}{1+\alpha}})^{**}$, which is sub-quadratic for large values of $t$.
\end{remark}

Next, we prove a truncation result following \cite[Lemma~4.4]{ContiFocardiIurlano2022}.
\begin{lemma}\label{l:suppenergy}
Assume (Hp~$1$), (Hp~$2$), and $\FailureS\in (0,\infty]$.
In addition, let $\varphi:[0,\infty)\to[0,\infty)$ be non-decreasing and $\varphi^{-1}(0)=\{0\}$.
Then, there exist two functions
$\zeta_1,\zeta_2:(0,1)\to (0,\infty)$ with
\begin{equation}\label{e:limittruncat}
\lim_{\delta \to 0}\zeta_1(\delta)=1 \qquad \textup{and} \qquad
\lim_{\delta \to 0}\zeta_2(\delta)=0
\end{equation}
satisfying the following property.
For every $\gamma\in(0,1\wedge \FailureS)$ there exists $\delta_\gamma\in(0,1)$ such that
for every $\delta\in(0,\delta_\gamma)$,
$(u,v)\in H^1(\Omega,\R\times[0,1])$, and $A\in \mathcal{A}(\Omega)$,
there is $s_\delta\in(1-\delta,1-\delta^2)$ such that $\chi_{\{v>s_\delta\}}\in BV(A)$,
$\tilde{u}_{\delta}:=u\,\chi_{\{v>s_\delta\}}\in SBV(A)$
(the previous quantities actually depend also on $\gamma$ and $A$) and
\begin{equation}\label{e:stimaprincipal}
 H_{\delta}(\tilde{u}_{\delta},A) \leq\Functeps(u,v,A)
\end{equation}
where $H_{\delta}:L^{1}(\Omega)\times \mathcal{A}(\Omega)\to [0,\infty]$
is defined for $\FailureS\in(0,\infty)$ by
\begin{equation}\label{e:Haccagrand sigma finito}
H_{\delta}(w,A)= \zeta_1(\delta)\int_A h_{\FailureS_\gamma}(|w'|) \dx + \zeta_2(\delta)\mathcal{H}^0(A\cap J_{w})
\end{equation}
if $w\in SBV(A)$, and $\infty$ otherwise
with $\FailureS_\gamma:=(\FailureS-\gamma)(1-\gamma)$; and for $\FailureS=\infty$ by
\begin{equation}\label{e:Haccagrand sigma infinito}
H_{\delta}(w,A)= \zeta_1(\delta)\int_A h_{\sfrac1\gamma-1}(|w'|) \dx
+ \zeta_2(\delta)\mathcal{H}^0(A\cap J_{w})
\end{equation}
if $w\in SBV(A)$, and $\infty$ otherwise.

Moreover, if $u_{\eps}\to u$ in $L^1(\Omega)$ and
$v_\eps\to 1$ in measure on $\Omega$, then $(\widetilde{u_{\eps}})_\delta \to u$ in $L^1(A)$ as $\eps\to 0$ for every $A\subseteq \mathcal{A}(\Omega)$ and $\delta \in (0,1)$.
\end{lemma}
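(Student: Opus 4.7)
My approach is to select a single level $s_\delta\in(1-\delta,1-\delta^2)$, via a mean-value argument on a coarea identity, such that the truncation $\tilde u_\delta=u\,\chi_{\{v>s_\delta\}}$ satisfies simultaneously a bulk inequality (absorbing a fraction of the diffuse energy into $h_{\FailureS_\gamma}$) and a surface inequality (controlling the finite number of level crossings by the residual Modica--Mortola term).

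The pointwise algebraic cornerstone is obtained by testing the infimum in \eqref{e:hsigma} with $\tau=1/(\eps f^2(v))$, which yields
\[
h_{\FailureS_\gamma}(|u'|)\le \varphi(\eps f^2(v))\,|u'|^2+\frac{\FailureS_\gamma^{\,2}}{4}\cdot \frac{Q(1-v)}{\eps\,\hatf(v)}\,.
\]
By (Hp~$1$)--(Hp~$2$) and \eqref{e:FailureS def} one has $\PotDann(1-v)\hatf(v)/Q(1-v)\to \FailureS^{\,2}$ as $v\to 1^-$; since $\FailureS_\gamma<\FailureS$ (choosing $\FailureS_\gamma=(\FailureS-\gamma)(1-\gamma)$ in the finite case, and $\FailureS_\gamma=1/\gamma-1$ in the infinite case with $\delta_\gamma$ picked so that $\PotDann\hatf/Q\ge(1/\gamma-1)^2$ on a right neighbourhood of $0$), there is $\eta(\delta)\to 0$ such that for all $v\in[1-\delta,1]$,
\[
\FailureS_\gamma^{\,2}\,Q(1-v)/\hatf(v)\le (1-\eta(\delta))\,\PotDann(1-v)\,.
\]
Accordingly a constant $\zeta_1(\delta)\to 1$ can be chosen so that, pointwise on $\{v>s_\delta\}$,
\[
\zeta_1(\delta)\,h_{\FailureS_\gamma}(|u'|)\le \varphi(\eps f^2(v))\,|u'|^2+\frac{(1-\eta(\delta))\,\PotDann(1-v)}{4\eps}\,.
\]
Integrating over $A$ and using $\tilde u_\delta'=u'\,\chi_{\{v>s_\delta\}}$ together with $h_{\FailureS_\gamma}(0)=0$ produces the bulk part of \eqref{e:stimaprincipal}.

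The residual contribution $\frac{\eta(\delta)\,\PotDann(1-v)}{4\eps}+\eps|v'|^2\ge \sqrt{\eta(\delta)}\,\PotDann^{\sfrac12}(1-v)\,|v'|$ (AM--GM) is what I will use for the surface term. The one-dimensional coarea formula gives
\[
\int_A \PotDann^{\sfrac12}(1-v)\,|v'|\,\dx=\int_0^1 \PotDann^{\sfrac12}(1-s)\,\mathcal H^0(\{v=s\}\cap A)\,\dd s\,,
\]
and a Chebyshev-type mean-value argument on $(1-\delta,1-\delta^2)$ selects a level $s_\delta$ at which $\{v=s_\delta\}\cap A$ is finite, so $\tilde u_\delta\in SBV(A)$ with $J_{\tilde u_\delta}\subset\{v=s_\delta\}\cap A$, and
\[
\PotDann^{\sfrac12}(1-s_\delta)\,\mathcal H^0(\{v=s_\delta\}\cap A)\le \frac{C\,\Functeps(u,v,A)}{\delta-\delta^2}\,.
\]
Setting $\zeta_2(\delta)$ to be a suitable multiple of $\sqrt{\eta(\delta)}\,(\delta-\delta^2)\,\inf_{s\in[1-\delta,1-\delta^2]}\PotDann^{\sfrac12}(1-s)$, which vanishes as $\delta\to0$ by (Hp~$2$), yields the surface inequality and the required limits \eqref{e:limittruncat}.

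For the final convergence claim I would combine $v_\eps\to 1$ in measure, which gives $|\{v_\eps\le s_\delta\}|\to 0$, with the uniform integrability of $\{u_\eps\}$, a consequence of Vitali's theorem applied to the $L^1$-converging sequence; this forces $\int_{\{v_\eps\le s_\delta\}}|u_\eps|\,\dx\to 0$, hence $\widetilde{(u_\eps)}_\delta\to u$ in $L^1(A)$. The main obstacle I anticipate is engineering the splitting of the diffuse term $\PotDann(1-v)/(4\eps)$ between the bulk and surface contributions so as to satisfy simultaneously $\zeta_1(\delta)\to 1$ and $\zeta_2(\delta)\to 0$; the infinite-failure case $\FailureS=\infty$ adds the subtlety of choosing $\delta_\gamma$ depending on $\gamma$ to guarantee $\PotDann\hatf/Q\ge(1/\gamma-1)^2$ near the origin, since $\FailureS-\gamma$ is no longer meaningful.
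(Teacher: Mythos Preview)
Your proposal is correct and follows essentially the same route as the paper: split the potential term, use the test value $\tau=1/(\eps f^2(v))$ in \eqref{e:hsigma} to obtain the bulk estimate on $\{v>s_\delta\}$, and combine AM--GM with coarea to select the level and bound the jump count. The paper makes the concrete choice $\eta(\delta)=\delta$ (so $\zeta_1(\delta)=1-\delta$) and phrases the coarea step through the primitive $\Psi(t)=\int_0^t\PotDann^{1/2}(1-\tau)\,\dd\tau$, yielding the explicit $\zeta_2(\delta)=\delta^{1/2}(\Psi(1-\delta^2)-\Psi(1-\delta))$; with this specific choice the $\gamma$-independence of $\zeta_1,\zeta_2$ (which the statement requires, and which your parameter $\eta(\delta)$ must respect) is immediate.
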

\begin{proof}
With fixed $\gamma,\,\delta\in(0,1)$ and $(u,v)\in H^1(\Omega,\R\times[0,1])$
setting $A_\delta:=\{x\in A:\,v(x)>1-\delta\}$, we argue as follows:
\begin{align}\label{e:stima Functeps}
\Functeps&(u,v,A)\nonumber\\
&\geq\int_{A_\delta}\left(\varphi(\eps f^2(v))|u'|^2+(1-\delta)\frac{\PotDann(1-v)}{4\eps}\right)\dx
+\int_{A}\left(\delta\,\frac{\PotDann(1-v)}{4\eps}+\eps|v'|^2\right)\dx \nonumber\\
&\geq\int_{A_\delta}\left(\varphi(\eps f^2(v))|u'|^2+(1-\delta)\frac{\PotDann(1-v)}{4\eps}\right)\dx +\delta^{\sfrac12}\int_{A}|\Psi(v)'|\dx\,,
\end{align}
where $\Psi$ is the function defined in \eqref{e:Psi}.

We distinguish the two cases $\FailureS\in(0,\infty)$ and $\FailureS=\infty$.
We start with the former. Then, as $\hatf(1)=1$ and \eqref{e:FailureS def} holds,
for every $\gamma\in(0,\FailureS\wedge1)$ there is $\delta_\gamma\in(0,1)$ such that if $t\in(0,\delta_\gamma)$
\[
\hatf(1-t)\geq(1-\gamma)^2\,,\qquad
\frac{\PotDann(t)}{Q(t)}\geq (\FailureS-\gamma)^2\,.
\]
Therefore, the first summand in \eqref{e:stima Functeps} can be estimated as follows for $\delta\in(0,\delta_\gamma)$
by monotonicity of $\varphi$
\begin{align*}
\int_{A_\delta}&\left(\varphi(\eps f^2(v)) |u'|^2+(1-\delta)(\FailureS-\gamma)^2\frac{Q(1-v)}{4\eps}\right)\dx\\
&\geq(1-\delta)\int_{A_\delta}\left(
\varphi\left(\eps\frac{(1-\gamma)^2}{Q(1-v)}\right)
|u'|^2+(\FailureS-\gamma)^2\frac{Q(1-v)}{4\eps}\right)\dx\\
&\geq(1-\delta)\int_{A_\delta}h_{\FailureS_\gamma}(|u'|)\dx\,,
\end{align*}
where $\FailureS_\gamma=(\FailureS-\gamma)(1-\gamma)$.
Hence, \eqref{e:stima Functeps} yields that for $\delta\in(0,\delta_\gamma)$
\begin{align}\label{e:stima Functeps sigmabarra finito}
\Functeps(u,v,A)
\geq(1-\delta)\int_{A_\delta}h_{\FailureS_\gamma}(|u'|)\dx
+\delta^{\sfrac12}\int_{A}|\Psi'(v)|\dx\,.
\end{align}

If $\FailureS=\infty$, by \eqref{e:FailureS def} for every $\gamma\in(0,1)$
there is $\delta_\gamma\in(0,1)$ such that if $t\in(0,\delta_\gamma)$
\begin{equation}\label{e:lim def}
\frac{Q(t)}{\PotDann(t)}\leq \gamma^2\,, \quad \text{and}\quad \hatf(1-t)\geq(1-\gamma)^2\,,
\end{equation}
from which it follows that for $\delta\in(0,\delta_\gamma)$ we have by monotonicity of $\varphi$
\begin{align*}
\int_{A_\delta}&\left(\varphi(\eps f^2(v)) |u'|^2+(1-\delta)\frac{\PotDann(1-v)}{4\eps}\right)\dx\\
&\geq\int_{A_\delta}
\left(
\varphi\left(\frac{\eps(1-\gamma)^2}{Q(1-v)}\right)
|u'|^2+(1-\delta)\frac{\PotDann(1-v)}{4\eps}\right)\dx
\\
&\geq\int_{A_\delta}
\left(
\varphi\left(\frac{\eps}{\PotDann(1-v)}\frac{(1-\gamma)^2}{\gamma^2}\right)
|u'|^2+(1-\delta)\frac{\PotDann(1-v)}{4\eps}\right)\dx\\
&\geq(1-\delta)\int_{A_\delta}h_{\sfrac1\gamma-1}(|u'|)\dx\,.
\end{align*}
Therefore, we deduce that  for $\delta\in(0,\delta_\gamma)$
\begin{align}\label{e:stima Functeps sigmabarra infinito}
\Functeps(u,v,A)
\geq(1-\delta)\int_{A_\delta}h_{\sfrac1\gamma-1}(|u'|)\dx
+\delta^{\sfrac12}\int_{A}|\Psi'(v)|\dx\,.
\end{align}
To conclude we follow closely the argument in \cite[Lemma~4.4]{ContiFocardiIurlano2022}.
We observe that $\Psi$ is strictly increasing, and in particular $\Psi$ is bijective. By the coarea formula,
\begin{equation*}
\int_A |\Psi'(v)|\dx = \int_0^{\Psi(1)}
\calH^{0}(A\cap \partial^*\{\Psi(v)>t\}) \dt.
\end{equation*}
Therefore, there is $t_\delta\in (\Psi(1-\delta),\Psi(1-\delta^2))$ such that
\begin{equation*}
(\Psi(1-\delta^2)-\Psi(1-\delta))
\calH^{0}(A\cap \partial^*\{\Psi(v)>t_\delta\}) \le \int_A |\Psi'(v)|\dx.
\end{equation*}
We define $\tilde u:=u\chi_{\{\Psi(v)>t_\delta\}\cap A}$ (dropping the dependence on both $\delta$ and $A$ from $\tilde u$).
As $u\in L^\infty(\Omega)$, then $\tilde u\in SBV(A)$.
Being $h_\varsigma(0)=0$ for all $\varsigma$ we obtain either by
\eqref{e:stima Functeps sigmabarra finito} or by
\eqref{e:stima Functeps sigmabarra infinito}
\begin{align*}
\Functeps(u,v,A)\ge (1-\delta)\int_{A}h(|\tilde{u}'|)\dx
+ \delta^{\sfrac12}(\Psi(1-\delta^2)-\Psi(1-\delta))
\calH^{0}(A\cap J_{\tilde u})
\end{align*}
where $h=h_{\FailureS_\gamma}$ in the first case and $h=h_{\sfrac1\gamma-1}$
in the second case.
Defining $\zeta_1(\delta):=1-\delta$,
$\zeta_2(\delta):=\delta^{\sfrac12}(\Psi(1-\delta^2)-\Psi(1-\delta))$, and $s_\delta:=\Psi^{-1}(t_\delta)$
we deduce \eqref{e:stimaprincipal} and \eqref{e:limittruncat}.

We also remark that $\|\tilde u-u\|_{L^1(A)}
\leq\|u\|_{L^1(\{v\le \Psi^{-1}(t_\delta)\})}$, hence, if the sequence $u_\eps$ is equi-integrable and $v_\eps\to1$ in
measure on $A$, we obtain that $u_\eps-\tilde u_\eps\to0$ in $L^1(A)$ for every $\delta\in(0,1)$.
\end{proof}

Next, we resume a partial relaxation result established along the proof of \cite[Proposition~4.2]{ContiFocardiIurlano2022} in the form needed in this paper.
\begin{proposition}\label{p:CFI_sci}
Let $\zeta\in(0,\infty)$, and $\phi:\R\to[0,\infty)$ be convex such that
\begin{equation*}
0\le \phi(t)\le c(1+|t|) \quad\textup{for all } t\in\R\,.
\end{equation*}
Define $\Phi:L^1(\Omega)\times\calA(\Omega)\to[0,\infty)$ by
\begin{equation*}
\Phi(w,A):=\int_A \phi(|w'|) \dx + \zeta\,\mathcal{H}^0(A\cap J_{w})
\end{equation*}
if $w\in SBV(\Omega)$, and $\infty$ otherwise.

Then, for any $w_j,w\in BV(\Omega)$, with $w_j\to w$ in $L^1(A)$, we have
\begin{equation*}
\liminf_j\Phi(w_j,A)\geq\int_A\phi(|w'|)\dx+\phi^\infty|D^cw|(A)\,,
\end{equation*}
where
\[
\phi^\infty:=\lim_{t\to\infty}\frac{\phi(t)}{t}\in[0,c]\,.
\]
\end{proposition}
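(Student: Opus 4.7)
The plan is to isolate the bulk measure $\mu_j := w_j'\mathcal{L}^1\res A$, show that its weak-$*$ limit $\mu$ differs from $Dw$ only by a purely atomic measure supported on a finite set, and then invoke a Reshetnyak-type lower semicontinuity theorem for convex integrands of linear growth. First, WLOG one may assume $\liminf_j\Phi(w_j,A) < \infty$ and extract a subsequence realizing this liminf with $\Phi(w_j,A)\leq C$. This immediately yields $\mathcal{H}^0(J_{w_j}\cap A)\leq C/\zeta =: N$. By convexity and non-negativity of $\phi$, one has $\phi(t)\geq \phi^\infty t - K$ for some $K\geq 0$; when $\phi^\infty>0$ this gives a uniform bound on $\int_A|w_j'|\dx$, hence on the total masses $|\mu_j|(A)$. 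The borderline case $\phi^\infty=0$ is handled by replacing $\phi$ with $\phi+\eta|\cdot|$ and sending $\eta\to 0^+$ at the end.

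Next, by weak-$*$ compactness of measures and Hausdorff compactness of the class of finite sets of cardinality at most $N$, extract a further subsequence so that $\mu_j\weakto^*\mu$ in $\mathcal{M}(A)$ and so that $J_{w_j}$ Hausdorff-converges to some $J_\infty\subset\bar A$ with $\#J_\infty\leq N$. Since $w_j\to w$ in $L^1(A)$ implies $Dw_j\to Dw$ in $\mathcal{D}'(A)$, writing $Dw_j=\mu_j+D^jw_j$ with $D^jw_j$ atomic and concentrated on $J_{w_j}$ shows that the distributional limit $\nu := Dw-\mu$ pairs trivially with any $\varphi\in C_c^\infty(A\setminus J_\infty)$, since for such $\varphi$ one has $\langle D^jw_j,\varphi\rangle=0$ for $j$ large. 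Hence $\nu$ is a Radon measure supported on $J_\infty$, and so $\nu=\sum_k\alpha_k\delta_{x^k}$ with $x^k\in J_\infty$ and $\alpha_k\in\R$.

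Decomposing $\mu=\mu^a\mathcal{L}^1 + \mu^s$ by Lebesgue--Radon--Nikodym, and using the usual $BV$ decomposition of $Dw$, the identity $\mu = Dw - \nu$ immediately gives $\mu^a = w'$ a.e.\ together with $\mu^s = D^c w + [w]\mathcal{H}^0\res J_w - \nu$. Since $\nu$ and $[w]\mathcal{H}^0\res J_w$ are purely atomic while $D^c w$ is non-atomic, it follows that $|\mu^s|(A)\geq |D^c w|(A)$. Applying the classical Reshetnyak lower semicontinuity to $\mu_j\weakto^*\mu$, for the convex integrand $t\mapsto\phi(|t|)$ with recession $\phi^\infty$, then gives
\[
\liminf_j\int_A\phi(|w_j'|)\dx\,\geq\,\int_A\phi(|w'|)\dx+\phi^\infty|\mu^s|(A)\,\geq\,\int_A\phi(|w'|)\dx+\phi^\infty|D^c w|(A),
\]
and absorbing the nonnegative jump term $\zeta\mathcal{H}^0(A\cap J_{w_j})$ into the liminf completes the proof. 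The main obstacle is the identification step: one has to justify carefully that $Dw-\mu$ is purely atomic and supported on the finite set $J_\infty$, so that the non-atomic Cantor part $D^c w$ of $Dw$ must come from the non-atomic part of $\mu^s$ and cannot be cancelled by atomic contributions.
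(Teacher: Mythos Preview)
The paper does not supply its own proof of this statement; it merely quotes the result, attributing it to the proof of \cite[Proposition~4.2]{ContiFocardiIurlano2022}. There is therefore nothing in the present paper to compare your argument against.

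Your strategy is sound and is essentially the standard route for such results: exploit the uniform bound $\#(J_{w_j}\cap A)\le N$ to show that the weak-$*$ limit $\mu$ of $w_j'\calL^1\res A$ differs from $Dw$ only by a measure supported on the finite Hausdorff limit $J_\infty$ of the jump sets, hence by something purely atomic; conclude that $D^cw$ lies entirely in the singular part of $\mu$; and finish with Reshetnyak lower semicontinuity. The identification step is argued correctly.

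Two technical points should be tightened. First, the inequality $\phi(t)\ge\phi^\infty t-K$ need not hold for convex $\phi$ of linear growth (take for instance $\phi(t)=t+1-\sqrt{t+1}$ on $[0,\infty)$, for which $\phi^\infty=1$ but $\phi(t)-t\to-\infty$); what is true, and suffices for the equi-boundedness of $\int_A|w_j'|\dx$, is $\phi(t)\ge at-K_a$ for every $a<\phi^\infty$. Second, your handling of the case $\phi^\infty=0$ via the perturbation $\phi\mapsto\phi+\eta|\cdot|$ is circular: applying the main argument to $\phi_\eta$ already presupposes a uniform bound on $\int_A|w_j'|\dx$, which is precisely what is unavailable when $\phi^\infty=0$. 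In the paper's applications one always has $\phi=h^{**}_\varsigma$ with $\varsigma>0$, so $\phi(0)=0$, $\phi$ is non-decreasing on $[0,\infty)$, and $\phi^\infty=(\varphi'(0^+))^{1/2}\varsigma>0$; the borderline case never arises (and under $\phi(0)=0$ it would force $\phi\equiv0$ anyway). Relatedly, Reshetnyak requires the effective integrand $t\mapsto\phi(|t|)$ to be convex on $\R$, which needs $\phi$ non-decreasing on $[0,\infty)$; this holds in the intended application but is not guaranteed by the hypotheses as literally stated---without it the conclusion can actually fail (e.g.\ $\phi(t)=(1-t)_+$ and $w_j(x)=j^{-1}\sin(j^2x)$ on $(0,1)$).
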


\subsection{Properties of the surface energy densities}

We collect here some useful properties of the function $g$.
With this aim, for every $s\in [0,\infty)$ we introduce the notation
\begin{align}\label{e:glispaziu}
  \calulu_{s}(0,T)=\{(\gamma,\beta) \in H^1((0,T),\R^2):\,&
  \gamma(0)=0\,,\gamma(T)=s\,, 
  0\leq \beta \leq 1\,, \beta(0)=\beta(T)= 1\}\,,
\end{align}
for every $T>0$, with the following convention
$\calulu_{s}:=\calulu_{s}(0,1)$.

\subsubsection{Case \texorpdfstring{$\FailureS\in(0,\infty)$}{...}}

In case $\FailureS\in(0,\infty)$, we state without proof some results on $g$
whose proofs can be obtained following word-by-word the arguments used in
the case $\PotDann(t)=Q(t)=t^2$ in \cite{ContiFocardiIurlano2016,BCI,BI23}
(in the notation of the last two papers $\hatf(t)=f_1^2(1-t)$).
We recall that $\Psi$ is the function defined in \eqref{e:Psi}.

\begin{proposition}\label{p:lepropdig}
Under the assumptions of Theorem~\ref{t:finale} with $\FailureS\in(0,\infty)$, the function $g$ defined in \eqref{e:lagiyo} enjoys the following properties:
\begin{itemize}
\item[(i)] $g(0)=0$ and $g$ is subadditive;

\item[(ii)] $g$ is non-decreasing, $g(s)\leq (\varphi'(0^+))^{\sfrac12}\,\FailureS s\wedge 2\Psi(1)$, $g$ is
Lipschitz continuous with Lipschitz constant equal to $(\varphi'(0^+))^{\sfrac12}\,\FailureS$;

\item[(iii)] the ensuing limit exists and
\begin{equation}\label{e:Lafigrandeinfi}
\lim_{s\to \infty} g(s)=2\Psi(1);
\end{equation}

\item[(iv)] the ensuing limit exists and
\begin{equation}\label{e:strenght}
\lim_{s\to 0}\frac{g(s)}{s}=(\varphi'(0^+))^{\sfrac12}\,\FailureS.
\end{equation}

\item[(v)]
the following alternative representation for $g$ holds
\begin{align}\label{e:lagicappucciobrodo}
 g(s)&=\inf_{T>0}\inf_{(\gamma,\beta)\in \mathfrak{U}_s(0,T)}
 \int_0^T\left(\varphi'(0^+)f^2(\beta)|\gamma'|^2 + \frac{\PotDann(1-\beta)}{4}+|\beta'|^2  \right)\dx
\end{align}

\item[(vi)] For all $\eta \in [0,1]$ and $s\in [0,\infty)$
\begin{equation}\label{e:legetayoobro}
 0\leq g(s)-g_\eta(s)\leq 2\lambda_{0}(\eta)\eta
\end{equation}
where $\lambda_{0}(s)=\max_{t\in [0,s]}\PotDann^{\sfrac12}(t)$, and
$g_\eta:[0,\infty)\to [0,\infty)$ is defined by
\begin{equation}\label{e:legieta}
    g_\eta(s):=\inf_{(\gamma,\beta)\in \calulu^{\eta}_s}\int_0^1
\Big(\PotDann(1-\beta)\big((\varphi'(0^+)f^2(\beta)|\gamma'|^2+|\beta'|^2\big)\Big)^{\sfrac12}
    \dx
\end{equation}
where 
\begin{align}\label{e:glispaziueta}
  \calulu^{\eta}_s
  :=\{(\gamma,\beta) \in H^1((0,1),\R^2):\,&
  \gamma(0)=0\,,\gamma(1)=s, \; 
  0\leq \beta \leq 1,\; \textup{and}\; \beta(0),\beta(1)\geq 1-\eta\}
\end{align}
\end{itemize}
\end{proposition}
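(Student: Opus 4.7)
The strategy is to transfer word-by-word the proofs for $\PotDann(t)=Q(t)=t^2$ in \cite{ContiFocardiIurlano2016,BCI,BI23} to the present setting. The main technical tool is a pair of pointwise AM-GM inequalities,
\begin{align*}
\bigl(\PotDann(1-\beta)(\varphi'(0^+)f^2(\beta)|\gamma'|^2+|\beta'|^2)\bigr)^{\sfrac12}&\ge \PotDann^{\sfrac12}(1-\beta)|\beta'|,\\
\bigl(\PotDann(1-\beta)(\varphi'(0^+)f^2(\beta)|\gamma'|^2+|\beta'|^2)\bigr)^{\sfrac12}&\ge (\varphi'(0^+))^{\sfrac12}\bigl(\PotDann(1-\beta)\hatf(\beta)/Q(1-\beta)\bigr)^{\sfrac12}|\gamma'|,
\end{align*}
combined with the invariance of $\calG$ under orientation-preserving reparametrizations of $[0,1]$, which is built into the $(\cdot)^{\sfrac12}$ structure of the integrand. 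The first bound reveals (twice) the total variation of $\Psi(\beta)$, while the second brings the factor $\FailureS$ into play through \eqref{e:FailureS def} and $\hatf(1)=1$.

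Items (i)--(ii) follow by standard manipulations. Testing $(\gamma,\beta)\equiv(0,1)\in\calulu_0$ yields $g(0)=0$. Subadditivity and monotonicity are obtained by concatenating two competitors after reparametrization to $[0,\sfrac12]$ and $[\sfrac12,1]$. The bound $g(s)\le 2\Psi(1)$ is realised by a brittle-type competitor in which $\beta$ drops from $1$ to near $0$ and returns to $1$ on a small interval while $\gamma$ concentrates its jump there; the bound $g(s)\le(\varphi'(0^+))^{\sfrac12}\FailureS s$ is realised by the constant choice $\beta\equiv 1-\tau$, $\gamma'\equiv s$, optimising $\tau\to 0^+$ via \eqref{e:FailureS def} and $\hatf(1)=1$. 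Lipschitzianity of $g$ then follows by combining subadditivity with this second bound.

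For (iii)--(v), the upper bound in \eqref{e:Lafigrandeinfi} is inherited from (ii); the matching lower bound follows from the first AM-GM, which gives $\calG(\gamma,\beta)\ge 2(\Psi(1)-\Psi(\min_{[0,1]}\beta))$, once one checks that along near-minimizers with $s\to\infty$ one has $\min\beta\to 0$ (otherwise the second AM-GM combined with $\int|\gamma'|\ge s$ forces $\calG\to\infty$). Formula \eqref{e:strenght} matches from below the linear bound of (ii): near-minimizers with $s\to 0$ are forced to satisfy $\beta\to 1$ uniformly (again by the first AM-GM), so the multiplier $(\PotDann(1-\beta)\hatf(\beta)/Q(1-\beta))^{\sfrac12}$ in the second AM-GM converges uniformly to $\FailureS$ and one integrates. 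Formula \eqref{e:lagicappucciobrodo} follows from an optimal reparametrization: setting $F=\varphi'(0^+)f^2(\beta)|\gamma'|^2+|\beta'|^2$ and $G=\PotDann(1-\beta)/4$, the pointwise Young inequality $F/\phi'+G\phi'\ge 2\sqrt{FG}$ with optimal choice $\phi'=\sqrt{F/G}$ (and total length $T=\int_0^1\sqrt{F/G}\dx$) realises equality.

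Finally, (vi) is obtained by an explicit boundary extension: starting from $(\gamma,\beta)\in\calulu^\eta_s$, extend $\beta$ on two short intervals adjoining the endpoints by monotone interpolation from $\beta(0),\beta(1)\ge 1-\eta$ up to $1$, while keeping $\gamma$ constant there; the extra cost is estimated via the first AM-GM by $\int_{1-\eta}^{1}\PotDann^{\sfrac12}(1-t)\dt\le\lambda_0(\eta)\eta$ on each side, giving \eqref{e:legetayoobro}. The main obstacle throughout is item (iv): securing the lower bound with the exact slope $(\varphi'(0^+))^{\sfrac12}\FailureS$ requires uniform control of $(\PotDann(1-\beta)\hatf(\beta)/Q(1-\beta))^{\sfrac12}$ along near-minimizing sequences as $\beta\to 1^-$, a step that in the classical quadratic case reduces to an elementary computation but now relies delicately on (Hp~$1$) and \eqref{e:FailureS def} to avoid losing any multiplicative constant.
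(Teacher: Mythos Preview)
Your proposal is correct and takes essentially the same approach as the paper: the paper in fact omits the proof entirely, stating that the arguments ``can be obtained following word-by-word the arguments used in the case $\PotDann(t)=Q(t)=t^2$ in \cite{ContiFocardiIurlano2016,BCI,BI23}'', which is precisely your opening line. Your sketch of the main steps (AM--GM bounds, reparametrization invariance, concatenation for subadditivity, boundary extension for (vi), and the delicate lower bound in (iv) via the uniform convergence of $(\PotDann(1-\beta)\hatf(\beta)/Q(1-\beta))^{\sfrac12}$ to $\FailureS$) is an accurate outline of how those cited proofs adapt, and actually supplies more detail than the paper itself.
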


\subsubsection{Case \texorpdfstring{$\FailureS=\infty$}{...}}
We turn next to establish the structural properties of the surface energy density $g$ in case $\FailureS=\infty$. The proof somewhat follows that of \cite[Propositions~4.1 and 7.3]{ContiFocardiIurlano2016} to which we refer in case it is an immediate adaptation of the latter. We highlight only the main changes.
\begin{proposition}\label{p:lepropdig infty}
Under the assumptions of Theorem~\ref{t:finale} with $\FailureS=\infty$,
the function $g$ defined in \eqref{e:lagiyo} enjoys the following properties:
\begin{itemize}
\item[(i)] $g(0)=0$,  $g$ is non-decreasing, and subadditive;
\item[(ii)] $g\in C^0([0,\infty))$, $0\leq g(s)\leq \widehat{g}(s)$ for every $s\geq 0$, where
\begin{equation}\label{e:g zero def}
    \widehat{g}(s):=\inf_{x\in(0,1]}\left\{2(\Psi(1)-\Psi(1-x))+\Big(\varphi'(0^+)\hatf(1-x)\frac{\PotDann(x)}{Q(x)}\Big)^{\sfrac12}\,s\right\}\,,
\end{equation}
$\widehat{g}\in C^0([0,\infty))$ is concave, non-decreasing, $\widehat{g}(s)\leq 2\Psi(1)$, and
\begin{equation}\label{e:g zero behaviour in 0}
\lim_{s\to 0^+}\frac{\widehat{g}(s)}s=\infty.
\end{equation}

\item[(iii)]
\begin{equation}\label{e:g behaviour in 0}
2^{-\sfrac12}\leq\liminf_{s\to 0^+}\frac{g(s)}{\widehat{g}(s)}\leq
\limsup_{s\to 0^+}\frac{g(s)}{\widehat{g}(s)}\leq 1\,,
\end{equation}

\item[(iv)]
\begin{equation*}
\lim_{s\to \infty} g(s)=2\Psi(1);
\end{equation*}

\item[(v)]
the alternative representation for $g$ in \eqref{e:lagicappucciobrodo} holds.
\end{itemize}
\end{proposition}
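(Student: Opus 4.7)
The plan is to follow the template of~\cite[Propositions~4.1 and~7.3]{ContiFocardiIurlano2016}, which treats the special case $\PotDann(t)=Q(t)=t^2$, and indicate the adjustments needed for the general $\varphi$, $\hatf$, $Q$, $\PotDann$ considered here. Items~(i), (iv), and~(v) are obtained from arguments that carry over without substantial change. For~(i), $g(0)=0$ is attained by the trivial pair $(\gamma,\beta)\equiv(0,1)$; subadditivity follows by the usual concatenate-and-rescale construction, which uses the positive $1$-homogeneity of the integrand in $(\gamma',\beta')$ and the invariance of $\calG$ under monotone reparametrisations of the domain; monotonicity is a consequence of subadditivity combined with $g(0)=0$. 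Item~(v) is the rescaling identity of~\cite[Lemma~3.3]{ContiFocardiIurlano2016}, relying on the same homogeneity.

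For item~(ii), I construct an explicit admissible pair depending on parameters $x\in(0,1]$ and $\eta\in(0,1)$: put $\beta\equiv 1-x$ and $\gamma$ affine from $0$ to $s$ on the middle slab $[\tfrac12-\tfrac\eta2,\tfrac12+\tfrac\eta2]$; on each outer interval let $\gamma$ be constant and let $\beta$ be a monotone reparametrisation connecting $1$ at the endpoint to $1-x$ at the interior interface, calibrated so that $\int\sqrt{\PotDann(1-\beta)}\,|\beta'|\,\dx=\Psi(1)-\Psi(1-x)$. Since $\beta'\equiv 0$ on the middle slab and $\gamma'\equiv 0$ on the outer ones, the integrand of $\calG$ reduces to a single term on each region, yielding $\calG\leq 2(\Psi(1)-\Psi(1-x))+\sqrt{\varphi'(0^+)\hatf(1-x)\PotDann(x)/Q(x)}\,s$. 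Infimising over $x$ gives $g(s)\leq\widehat g(s)$. The properties of $\widehat g$ claimed in~(ii) follow directly from its definition: concavity in $s$ as an infimum of affine functions, non-negativity and monotonicity, and the bound $\widehat g\leq 2\Psi(1)$ by choosing $x=1$ (since $\hatf(0)=0$); continuity at $0$ follows from $2(\Psi(1)-\Psi(1-x))\to 0$ as $x\to 0^+$. The divergence $\widehat g(s)/s\to\infty$ is the genuinely new feature induced by $\FailureS=\infty$: writing $C(x):=\sqrt{\varphi'(0^+)\hatf(1-x)\PotDann(x)/Q(x)}$ we have $C(x)\to\infty$ as $x\to 0^+$, and a short contradiction argument on the competing terms in the infimum yields the claim.

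Items~(iii) and~(iv) constitute the analytic core. The $\limsup$ in~(iii) is immediate from $g\leq\widehat g$. For the $\liminf$, the elementary inequality $\sqrt{a^2+b^2}\geq(|a|+|b|)/\sqrt2$ decouples the integrand of $\calG$, producing the lower bound
\[
g(s)\geq 2^{-\sfrac12}\inf_{(\gamma,\beta)\in\calulu_s}\left\{\int_0^1\sqrt{\varphi'(0^+)\PotDann(1-\beta)\hatf(\beta)/Q(1-\beta)}\,|\gamma'|\,\dx+\int_0^1\sqrt{\PotDann(1-\beta)}\,|\beta'|\,\dx\right\}.
\]
The decoupled functional is invariant under monotone reparametrisation of $\gamma$ and of $\beta$ separately; the coarea formula controls the second integral from below by $2(\Psi(1)-\Psi(1-x^*))$ with $x^*:=1-\min\beta$, while for the first integral the minimum of the prefactor on the range of $\beta$ extracts the factor $s\cdot\min_{y\in[0,x^*]}C(y)$. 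The main technical obstacle is to show that, in the regime $s\to 0^+$, this minimum coincides with $C(x^*)$ for the optimal (necessarily small) $x^*(s)$; this uses $C(y)\to\infty$ as $y\to 0^+$ together with a comparison with the profile attaining $\widehat g(s)$, and concludes in $\liminf g(s)/\widehat g(s)\geq 2^{-\sfrac12}$. Finally, item~(iv) follows by the cruder pointwise estimate $\sqrt{A(B+C)}\geq\sqrt{AC}$, which gives $g(s)\geq 2(\Psi(1)-\Psi(1-x^*))$ for any near-optimal profile; a complementary pointwise bound on the $\gamma$-part forces $\min\beta\to 0$ as $s\to\infty$ along near-optimal profiles, since otherwise $\int|\gamma'|\,\dx\geq s$ integrated against a uniformly positive prefactor would violate $g(s)\leq 2\Psi(1)$.
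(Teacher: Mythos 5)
Your overall route coincides with the paper's: items (i), (iv), (v) via the arguments of Conti--Focardi--Iurlano, the upper bound $g\leq\widehat g$ in (ii) via the same plateau construction, and the lower bound in (iii) via the decoupling inequality $\sqrt{a^2+b^2}\geq 2^{-\sfrac12}(a+b)$. The one place where your plan would not go through as written is the final step of (iii). You reduce to
\[
g(s)\geq 2^{-\sfrac12}\Big(s\,\min_{y\in(0,x^*]}C(y)+2\big(\Psi(1)-\Psi(1-x^*)\big)\Big),\qquad x^*:=1-\min\beta\,,
\]
and then present as ``the main technical obstacle'' the task of showing that $\min_{(0,x^*]}C=C(x^*)$ for the optimal, small $x^*(s)$. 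That claim is not available: $C(y)=(\varphi'(0^+)\hatf(1-y)\PotDann(y)/Q(y))^{\sfrac12}$ is only known to diverge as $y\to0^+$ (this is $\FailureS=\infty$); no monotonicity of $\PotDann/Q$ or $\hatf$ is assumed near this endpoint ((Hp~1) gives monotonicity of $f$ near $t=0$, i.e.\ $y$ near $1$, not near $0$), so $C$ may attain its minimum on $(0,x^*]$ at an interior point, however small $x^*$ is. Fortunately the step is unnecessary: if $y^*\in(0,x^*]$ is a minimum point of $C$ there (it exists since $C$ is continuous on $(0,1]$ and $C\to\infty$ at $0^+$), then $y^*\leq x^*$ and the monotonicity of $\Psi$ give
\[
s\,C(y^*)+2\big(\Psi(1)-\Psi(1-x^*)\big)\geq s\,C(y^*)+2\big(\Psi(1)-\Psi(1-y^*)\big)\geq \widehat g(s)\,,
\]
so your decoupled bound already dominates $2^{-\sfrac12}\widehat g(s)$, for every $s$ and with no comparison with the profile attaining $\widehat g(s)$. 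This is exactly how the paper concludes: it evaluates both terms of the decoupled integrand at the same point, a minimum point $x_j$ of the prefactor $\xi_j$, and uses $\Psi(1)-\min_{[0,1]}\Psi(\beta_j)\geq\Psi(1)-\Psi(\beta_j(x_j))$ to recognize an admissible competitor for $\widehat g(s_j)$.

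Two smaller points. Item (ii) also asserts $g\in C^0([0,\infty))$, which you never address; the paper gets it from (i) together with $g\leq\widehat g$, namely $|g(s_2)-g(s_1)|\leq g(|s_2-s_1|)\leq\widehat g(|s_2-s_1|)\to 0$. In (i), monotonicity of $g$ does not follow from subadditivity plus $g(0)=0$ (that implication is false in general); it is proved directly by replacing a near-optimal $\gamma$ for $g(s')$ with $\tfrac{s}{s'}\gamma$ (or $\gamma\wedge s$), which is admissible for $s\leq s'$ and does not increase the energy. Your sketch of (iv), by contrast, is a sound direct argument (the paper simply cites the earlier reference), provided you record that the prefactor $C$ is bounded away from zero on $(0,1-\delta_0]$ for each $\delta_0>0$, which follows from $C>0$ on $(0,1)$ and $C\to\infty$ at $0^+$.
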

\begin{proof}
For the proofs of items (i), (iv) and
(v) we refer to \cite[Proposition~4.1]{ContiFocardiIurlano2016}.
To prove (ii), let $s\in(0,1)$ and consider $\gamma=0$ on $[0,\sfrac13]$, $\gamma=s$ on $[\sfrac23,1]$ and the linear interpolation between $0$ and $s$ on $[\sfrac13,\sfrac23]$, while
$\beta=1-\delta$ on $[\sfrac13,\sfrac23]$ and the linear interpolation between $\delta$ and $1$ on $[0,\sfrac13]\cup[\sfrac23,1]$, where $\delta\in(0,1]$ is arbitrary. 
In particular, $(\gamma,\beta)\in \mathfrak{U}_s$ and a simple calculation gives
\begin{equation*}
    g(s)\leq 2(\Psi(1)-\Psi(1-\delta))+\left(\varphi'(0^+)\hatf(1-\delta)\frac{\PotDann(\delta)}{Q(\delta)}\right)^{\sfrac12}\,s \,.
\end{equation*}
Therefore, by \eqref{e:g zero def} it holds that for every $s\geq 0$
\begin{equation}\label{e:g zero}
    g(s)\leq \widehat{g}(s)\,.
\end{equation}
Note that by its very definition $\widehat{g}$ is concave, non-decreasing, $\widehat{g}(s)> \widehat{g}(0)=0$, and
$\widehat{g}(s)\leq 2\Psi(1)$ for every $s> 0$. Therefore, $\widehat{g}$ is continuous.
Moreover, by assumptions (Hp~$1$)-(Hp~$4$) and as $\FailureS=\infty$, it is easy to infer
that for $s>0$ at least a minimizer exists for the problem defining $\widehat{g}(s)$.
Clearly, all minimizers are strictly positive if $s>0$.
Denote by $\zeta(s)$ the smallest minimizers, which exists by continuity of $\hatf$, $\PotDann$, $Q$, and $\Psi$.
Then $\zeta(s)>0$ if $s>0$, and as $\widehat{g}(s)\geq 2(\Psi(1)-\Psi(1-\zeta(s)))$, we have $\zeta(s)\to 0^+$ as $s\to0^+$.
Thus, to establish \eqref{e:g zero behaviour in 0} it is sufficient to take into account that
$\FailureS=\infty$ and to note that
\[
\frac{\widehat{g}(s)}s
\geq\Big(\varphi'(0^+)\hatf(1-\zeta(s))\frac{\PotDann(\zeta(s))}{Q(\zeta(s))}\Big)^{\sfrac12}\,.
\]
Having fixed $s_1\,,s_2\in[0,1]$, by the monotonicity and subadditivity
of $g$, \eqref{e:g zero} yields that $|g(s_2)-g(s_1)|\leq g(|s_2-s_1|)\leq \widehat{g}(|s_2-s_1|)$.
The continuity of $g$ then follows.

We establish next item (iii). First, note that by \eqref{e:g zero}
it immediately follows the inequality on the right hand side of \eqref{e:g behaviour in 0}.
On the other hand, if $\{s_j\}_{j\in \N}$ is an infinitesimal sequence realizing the inferior limit in
\eqref{e:g behaviour in 0},  let $\lambda_j=o(\widehat{g}(s_j))\geq 0$ as $j\to\infty$, and
$(\gamma_j,\beta_j)\in \calulu_{s_j}$ (cf. \eqref{e:glispaziu}) be competitors such that
\begin{equation*}
    \int_0^1 \Big(\PotDann(1-\beta_j)\big(\varphi'(0^+)f^2(\beta_j)|\gamma_j'|^2+|\beta_j'|^2\big)\Big)^{\sfrac12}\dx \leq g(s_j)+\lambda_j\,,
\end{equation*}
then defining $\xi_j(x):=\big(\varphi'(0^+)\hatf(\beta_j(x))\frac{\PotDann(1-\beta_j(x))}{Q(1-\beta_j(x))}\big)^{\sfrac12}$,
and using the concavity of the square root we have
\begin{align*}
g(s_j)+\lambda_j&\geq 2^{-\sfrac12}
\int_0^1\left(
\xi_j|\gamma_j'|+|(\Psi(\beta_j))'|\right)\dx\notag\\
&\geq 2^{-\sfrac12}\Big(
\xi_j(x_j)\,s_j
+2(\Psi(1)-\min_{[0,1]}\Psi(\beta_j))\Big)\notag\\
&\geq2^{-\sfrac12}\left(
\xi_j(x_j)\,s_j+2(\Psi(1)-\Psi(\beta_j(x_j))\right)\geq
2^{-\sfrac12}\widehat{g}(s_j)\,,
\end{align*}
where $x_j\in(0,1)$ denotes an absolute minimum
point of $\xi_j\in C^0((0,1))$ (note that $\xi_j\to\infty$ both as $x\to0^+$
and as $x\to1^-$ being $\beta_j\in\calulu_{s_j}$).
Then the first inequality in \eqref{e:g behaviour in 0} follows at once being $\lambda_j=o(\widehat{g}(s_j))$ as $j\to\infty$.
\end{proof}

\section{\texorpdfstring{$\Gamma$}{...}-convergence and compactness}\label{s:gammaconv compactness}

In this section we address the $\Gamma$-convergence and compactness properties of the family $\{\Functeps\}_{\eps>0}$.
We distinguish the cases $\FailureS\in(0,\infty)$,
$\FailureS=\infty$.

\subsection{Case \texorpdfstring{$\FailureS\in(0,\infty)$}{...}}

We begin with establishing compactness and identifying the domain of the eventual $\Gamma$-limit.
\begin{theorem}[Compactness]\label{t:cptness sigma finito}
Under the assumptions of Theorem~\ref{t:finale} with $\FailureS\in(0,\infty)$,
let $\{(u_{\eps},v_{\eps})\}_{\eps>0} \in L^1(\Omega,\R^2)$ be such that
\begin{equation}\label{e:supremumnorme sigma finito}
    \sup_{\eps >0} (\Functeps(u_{\eps},v_{\eps})+\|u_{\eps}\|_{L^1(\Omega)})<\infty\,.
\end{equation}
Then there are 
$\{\eps_k\}_{k\in \N}$ and $u\in L^1\cap GBV(\Omega)$ such that
$(u_{\eps_k},v_{\eps_k})\to (u,1)$ $\calL^1$-a.e. on $\Omega$.
If, moreover, 
$(u_\eps)_\eps$ is equi-integrable then $u_{\eps_k}\to u$ in $L^1(\Omega)$.
\end{theorem}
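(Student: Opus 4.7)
The compactness splits into a phase-field part and a displacement part, the latter handled via the truncation Lemma~\ref{l:suppenergy} combined with amplitude truncation and one-dimensional $SBV$-compactness.

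\emph{Phase field.} From \eqref{e:supremumnorme sigma finito} one gets $\int_\Omega \PotDann(1-v_\eps)\dx \le 4C\eps\to 0$, so $\PotDann(1-v_\eps)\to 0$ in $L^1(\Omega)$ and, up to a subsequence, $\calL^1$-a.e.\ on $\Omega$. Continuity of $\PotDann$ together with $\PotDann^{-1}(\{0\})=\{0\}$ then forces $v_\eps\to 1$ $\calL^1$-a.e. along the same subsequence.

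\emph{Truncation and $SBV$-compactness.} Fix $\gamma\in(0,1\wedge\FailureS)$ and $\delta\in(0,\delta_\gamma)$. Lemma~\ref{l:suppenergy} produces $\tilde u_{\eps,\delta}:=u_\eps\chi_{\{v_\eps>s_\delta\}}\in SBV(\Omega)$ with
\[
H_\delta(\tilde u_{\eps,\delta},\Omega)\le \Functeps(u_\eps,v_\eps)\le C.
\]
To get a $BV$-bound I further truncate in amplitude, setting $w^M_{\eps,\delta}:=(\tilde u_{\eps,\delta}\wedge M)\vee(-M)$ for $M\in\N$. Since $h_{\FailureS_\gamma}(0)=0$ and $h_{\FailureS_\gamma}$ is non-decreasing (Proposition~\ref{p:proprieta hsigma}(i)), truncation does not increase $H_\delta$. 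The linear growth at infinity from \eqref{minorconlin}, namely $h_{\FailureS_\gamma}(t)\ge h^{**}_{\FailureS_\gamma}(t)\ge (\varphi'(0^+))^{\sfrac12}\FailureS_\gamma\,|t|-\xi_\gamma$ with $\FailureS_\gamma=(\FailureS-\gamma)(1-\gamma)>0$, upgrades the $H_\delta$-bound to a uniform bound on $\int_\Omega|(w^M_{\eps,\delta})'|\dx$. Combined with $\|w^M_{\eps,\delta}\|_\infty\le M$ and $\calH^0(J_{w^M_{\eps,\delta}})\le C/\zeta_2(\delta)$, Ambrosio's one-dimensional $SBV$-compactness theorem yields a subsequence along which $w^M_{\eps,\delta}\to w^M_\delta\in SBV(\Omega)$ in $L^1(\Omega)$ and $\calL^1$-a.e.

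\emph{Diagonal extraction and identification of the $GBV$-limit.} A diagonalization over countably many $M\in\N$ and $\delta$ tending to $0$ produces a single subsequence $\eps_k$ along which $w^M_{\eps_k,\delta}\to w^M_\delta$ a.e.\ simultaneously for every such pair $(M,\delta)$. Since $v_{\eps_k}\to 1$ a.e.\ and $s_\delta<1-\delta^2$, for a.e.\ $x\in\Omega$ eventually $v_{\eps_k}(x)>s_\delta$, so $\tilde u_{\eps_k,\delta}(x)=u_{\eps_k}(x)$. This shows $w^M_\delta$ is independent of $\delta$ (denote it $w^M$) and that $u_{\eps_k}\to w^M$ a.e.\ on $\{|w^M|<M\}$. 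Chebyshev's inequality $|\{|u_\eps|>M\}|\le \|u_\eps\|_{L^1}/M$ lets me define $u(x):=\lim_M w^M(x)$ a.e., conclude $u_{\eps_k}\to u$ a.e.\ on $\Omega$, and, via the $GBV$-truncation identities $J_u=\bigcup_M J_{u^M}$ and $|D^cu|(A)=\sup_M|D^cu^M|(A)$ recalled at the start of Section~\ref{s:technical results}, identify the truncations of $u$ with the $w^M$ and deduce $u\in L^1\cap GBV(\Omega)$. If in addition $(u_\eps)_\eps$ is equi-integrable, Vitali's theorem upgrades the a.e.\ convergence to $L^1(\Omega)$ convergence.

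\emph{Main obstacle.} The most delicate point I anticipate is the coherent diagonal procedure, namely verifying that the $(M,\delta)$-indexed $SBV$-limits assemble into a single $GBV$ function whose truncations are exactly the $w^M$, so that a.e.\ convergence of $u_\eps$ (and not merely of amplitude/phase-truncated versions) follows. This relies crucially on $\chi_{\{v_\eps>s_\delta\}}\to 1$ a.e.\ for every fixed $\delta$, on the monotonicity of $H_\delta$ under truncation, and on the $GBV$ structural identities recalled above.
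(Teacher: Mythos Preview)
Your proposal is correct and follows essentially the same route as the paper: convergence of $v_\eps$ from the potential term, application of Lemma~\ref{l:suppenergy}, amplitude truncation, $BV$/$SBV$ compactness, a diagonal extraction over $M$, and assembly of the limits $w^M$ into a single $u\in L^1\cap GBV(\Omega)$ via the truncation identities. The only notable difference is that you also diagonalize over countably many $\delta$, whereas the paper simply fixes one $\delta\in(0,\delta_\gamma)$ throughout (the resulting subsequence depends on $\delta$, but this is harmless since only existence of a subsequence is claimed); your extra step is redundant but not wrong.
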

\begin{proof}
Let $C>0$ be the supremum on the left hand side of \eqref{e:supremumnorme sigma finito}, then
\[
\int_\Omega\PotDann(1-v_\eps)\dx\leq C\eps\,,
\]
so that $v_\eps\to 1$ in measure on $\Omega$, and moreover
$(u_{\eps},v_{\eps})\in H^1(\Omega,\R\times[0,1])$.

We show next that there is a subsequence of $(u_\eps)_\eps$ converging in measure on $\Omega$.
With this aim, fix $\gamma\in(0,\FailureS\wedge 1)$, then Lemma~\ref{l:suppenergy} provides $\delta_\gamma>0$ and functions $\tilde{u}_{\eps,\delta}:=(\widetilde{u_\eps})_{\delta}\in SBV(\Omega)$ such that
$H_\delta(\tilde u_{\eps,\delta})\leq \Functeps(u_\eps,v_\eps)$ for every $\delta\in(0,\delta_\gamma)$, with bulk density $h_{\FailureS_\gamma}$ where $\FailureS_\gamma=(\FailureS-\gamma)(1-\gamma)$
(cf. \eqref{e:Haccagrand sigma finito}).
Therefore, by $\eqref{minorconlin}$ we deduce that for some constant $C_0$
depending on $\FailureS$, $\gamma$ and $\delta$ we have
\[
\sup_\eps
\int_\Omega|\tilde{u}_{\eps,\delta}'|\dx+\calH^0(J_{\tilde{u}_{\eps,\delta}})
\leq C_0\,.
\]
Considering the truncated functions $\tilde u_{\eps,\delta}^M
\in SBV(\Omega)$ for $M\in\N$ (cf. \eqref{e:truncations}), \eqref{e:supremumnorme sigma finito} and the previous estimate yield that $\sup_{\eps>0}\|\tilde{u}^M_{\eps,\delta}\|_{BV(\Omega)}\leq C_M<\infty$. The $BV$ compactness Theorem and an elementry diagonal argument imply the existence of a subsequence $\eps_k$ (independent from $M$, but depending on $\delta$) and of $u^M\in BV(\Omega)$ such that $\tilde{u}_{\eps_{k},\delta}^M\to u^M$ in $L^1(\Omega)$ for every $M\in\N$.

We recall that in the proof of Lemma~\ref{l:suppenergy} we have set $\tilde{u}_{\eps,\delta}=u_\eps\chi_{\{ \Psi(v_\eps)>t_{\eps,\delta}\}}$ for some $t_{\eps,\delta}\in(\Psi(1-\delta),\Psi(1-\delta^2))$, and thus  $\calL^1(\{\Psi(v_\eps)\leq t_{\eps,\delta}\})\to 0$ as $\eps\to0$ as $v_\eps\to 1$ in measure on $\Omega$.
Thus, we infer the $(u_{\eps_k}^M)_k$ convergence in measure on $\Omega$ to $u^M$, i.e.
\begin{equation}\label{e:conv measure uepsk}
\limsup_k\calL^1(\{|\tilde{u}_{\eps_k,\delta}^M-u_{\eps_k}^M|\geq\eta\})
 \leq\limsup_k\calL^1(\{\Psi(v_\eps)< t_{\eps,\delta}\})=0\,.
\end{equation}

It is easy to check that $u^{M+1}=u^M$ if $|u^{M+1}|\leq M$, therefore defining $u:=\sup_{M\in\N}u^M$
we conclude that $u\in GBV(\Omega)$. In addition, we have
$u\in L^1(\Omega)$ as
\begin{align}\label{e:u norm}
\|u\|_{L^1(\Omega)}&\leq
\liminf_M\|u^M\|_{L^1(\Omega)}\nonumber
\\&\leq\liminf_M(\liminf_k
\|u_{\eps_{k}}^M\|_{L^1(\Omega)})\leq
\liminf_k
\|u_{\eps_{k}}\|_{L^1(\Omega)}\leq C\,.
\end{align}
Finally, for every $\eta>0$ we have that
\begin{align*}
\calL^1&(|u-u_{\eps_{k}}|\geq \eta)\leq
\calL^1(|u|> M)+\calL^1(|u^M-u_{\eps_k}^M|\geq \sfrac\eta3)\\&
+\calL^1(|u_{\eps_k}^M-u_{\eps_{k}}|\geq\sfrac\eta3)
\leq\calL^1(|u^M-u_{\eps_k}^M|\geq \sfrac\eta3)
+2\frac CM\,,
\end{align*}
where in the last inequality we have used
\eqref{e:supremumnorme sigma finito} and \eqref{e:u norm}.
From this we conclude that $(u_{\eps_{k}})_k$
converges in measure on $\Omega$ to $u$ by letting first $k\to\infty$ and then $M\to\infty$.

The claimed $\calL^1$-a.e. convergence on $\Omega$ holds up to extracting a further subsequence. Finally, if $(u_\eps)_\eps$ is equi-integrable,
by taking into account \eqref{e:conv measure uepsk}
we estimate as follows
\begin{align*}
\|u-u_{\eps_k}\|_{L^1(\Omega)}&\leq\int_{\{|u|>M\}} |u|\dx+\|u^M-u_{\eps_k}^M\|_{L^1(\Omega)}
+\|u_{\eps_k}^M-u_{\eps_k}\|_{L^1(\Omega)}\\
&\leq \int_{\{|u|>M\}} |u|\dx+\|u^M-\tilde{u}^M_{\eps_k,\delta}\|_{L^1(\Omega)}\\
&+\int_{\{\Psi(v_{\eps_k})\leq t_{\eps_k,\delta}\}}|u_{\eps_k}|\dx
+\int_{\{|u_{\eps_k}|>M\}} |u_{\eps_k}|\dx\,.
\end{align*}
The 
conclusion then follows by letting first $k\to\infty$ and then $M\to\infty$.
\end{proof}

Next, we show the lower bound inequality for the diffuse part, for which we follow \cite[Proposition~4.2]{ContiFocardiIurlano2022}.
\begin{proposition}\label{p:diffuseliminf sigmabarra finito}
Under the assumptions of Theorem~\ref{t:finale} with $\FailureS\in(0,\infty)$,
for every $(u_{\eps},v_{\eps}),\,(u,v)\in L^1(\Omega,\R^2)$ with
$(u_{\eps},v_{\eps})\to (u,v)$ in $L^1(\Omega,\R^2)$, we have
\begin{equation}\label{e:liminf diffuse sigmabarra finito}
    \int_A \hsigmabarra^{**}(|u'|)\dx + (\varphi'(0^+))^{\sfrac12}\FailureS |D^cu|(A) \leq \liminf_{\eps \to 0} \Functeps(u_{\eps},v_{\eps},A)\,,
\end{equation}
for every $A\in\calA(\Omega)$.
\end{proposition}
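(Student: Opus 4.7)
My plan is to reduce the lower bound to an inequality for truncated $SBV$ functions via Lemma~\ref{l:suppenergy}, apply the relaxation result Proposition~\ref{p:CFI_sci} to the convex envelope $h_{\FailureS_\gamma}^{**}$, and then remove the three auxiliary parameters $M$ (level of value truncation), $\delta$ (from the Lemma), and $\gamma$ (from the approximation of $\FailureS$) in that order. The proof follows the blueprint of \cite[Proposition~4.2]{ContiFocardiIurlano2022}, but has to keep track of the presence of $\varphi$.

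We may assume the liminf is finite and work along a subsequence realizing it. The bound on the $\PotDann$-term forces $v_\eps\to 1$ in measure, hence $v=1$ $\calL^1$-a.e., and since $u_\eps\to u$ in $L^1$, the last part of Lemma~\ref{l:suppenergy} gives, for every $\gamma\in(0,1\wedge\FailureS)$ and every $\delta\in(0,\delta_\gamma)$, functions $\tilde u_{\eps,\delta}\in SBV(A)$ with $\tilde u_{\eps,\delta}\to u$ in $L^1(A)$ and
\[
\Functeps(u_\eps,v_\eps,A)\ge \zeta_1(\delta)\!\int_A h_{\FailureS_\gamma}(|\tilde u_{\eps,\delta}'|)\dx+\zeta_2(\delta)\calH^0(A\cap J_{\tilde u_{\eps,\delta}}).
\]
Since $h_{\FailureS_\gamma}\ge h_{\FailureS_\gamma}^{**}$, we may replace $h_{\FailureS_\gamma}$ by its convex envelope without loss. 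Then for $M\in\N$ we truncate values using $\tilde u_{\eps,\delta}^M\to u^M$ in $L^1(A)$, observing that $(\tilde u_{\eps,\delta}^M)'$ vanishes on $\{|\tilde u_{\eps,\delta}|\ge M\}$ and equals $\tilde u_{\eps,\delta}'$ otherwise, while $J_{\tilde u_{\eps,\delta}^M}\subseteq J_{\tilde u_{\eps,\delta}}$; since $h_{\FailureS_\gamma}^{**}\ge 0$ vanishes at the origin and is non-decreasing, this preserves the lower bound.

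Now $h_{\FailureS_\gamma}^{**}$ is convex with at most linear growth by \eqref{minorconlin}, and by \eqref{e:laaccabroo} its recession slope equals $(\varphi'(0^+))^{\sfrac12}\FailureS_\gamma$. Applying Proposition~\ref{p:CFI_sci} with $\phi=\zeta_1(\delta)h_{\FailureS_\gamma}^{**}$ and $\zeta=\zeta_2(\delta)$ to the sequence $\tilde u_{\eps,\delta}^M\to u^M$ in $L^1(A)$ yields
\[
\liminf_\eps \Functeps(u_\eps,v_\eps,A)\ge \zeta_1(\delta)\!\int_A h_{\FailureS_\gamma}^{**}(|(u^M)'|)\dx+\zeta_1(\delta)(\varphi'(0^+))^{\sfrac12}\FailureS_\gamma\,|D^cu^M|(A).
\]
We then let $M\to\infty$: by monotone convergence $\int_A h_{\FailureS_\gamma}^{**}(|(u^M)'|)\dx\to\int_A h_{\FailureS_\gamma}^{**}(|u'|)\dx$, and $|D^cu^M|(A)\uparrow |D^cu|(A)$ by the very definition of the Cantor measure for $GBV$ functions recalled in Section~\ref{s:technical results}. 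Finally we let $\delta\to 0$ using $\zeta_1(\delta)\to 1$ from \eqref{e:limittruncat}, and then $\gamma\to 0^+$: since $\FailureS_\gamma=(\FailureS-\gamma)(1-\gamma)\nearrow\FailureS$, monotone convergence together with the continuity statement \eqref{e:hsigmacvxapprox} of Proposition~\ref{p:proprieta hsigma}(i) gives the desired inequality \eqref{e:liminf diffuse sigmabarra finito}.

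The main technical hurdle is the truncation step: we need to pass from the (possibly non-convex) density $h_{\FailureS_\gamma}$ delivered by Lemma~\ref{l:suppenergy} to its convex envelope while still retaining the precise linear recession slope $(\varphi'(0^+))^{\sfrac12}\FailureS_\gamma$, which is exactly what is needed to recover the Cantor term in the limit with the correct constant. This is where the sharp linear growth estimate \eqref{minorconlin} and the identification of the recession function \eqref{e:laaccabroo} of $h^{**}_{\FailureS_\gamma}$ play a decisive role; without them Proposition~\ref{p:CFI_sci} would not produce the factor $(\varphi'(0^+))^{\sfrac12}\FailureS$ in front of $|D^cu|(A)$.
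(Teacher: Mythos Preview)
Your argument is correct and follows essentially the same route as the paper: use Lemma~\ref{l:suppenergy} to produce $SBV$ truncations, apply Proposition~\ref{p:CFI_sci} to $h_{\FailureS_\gamma}^{**}$, and then remove the auxiliary parameters. The only notable difference is the order of the final limits (you take $M\to\infty$ first, then $\delta\to0$, then $\gamma\to0$, whereas the paper does $\delta,\gamma,M$), which is immaterial, and that you truncate \emph{after} applying Lemma~\ref{l:suppenergy} while the paper truncates $u_\eps$ first; since $(\widetilde{u_\eps^M})_\delta=((\widetilde{u_\eps})_\delta)^M$ these coincide. One small point worth making explicit: before invoking Proposition~\ref{p:CFI_sci} you need $u^M\in BV(A)$; the paper obtains this by appealing to Theorem~\ref{t:cptness sigma finito}, whereas in your write-up it is implicit in the $BV$-bounds on $\tilde u_{\eps,\delta}^M$ coming from \eqref{minorconlin} and the $L^\infty$ bound.
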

\begin{proof}
By superadditivity of the inferior limit it suffices to assume that $A\in\calA(\Omega)$ is an interval.
Moreover, we can suppose the inferior limit on the right hand side of
\eqref{e:liminf diffuse sigmabarra finito} to be finite, otherwise the claim
is obvious.
Therefore, $(u_{\eps},v_{\eps})\in H^1(A,\R\times[0,1])$ for $\eps>0$ sufficiently small, $u\in GBV(A)$
and $v=1$ $\calL^1$-a.e. on $A$ by Theorem~\ref{t:cptness sigma finito} applied on $A$ in place of $\Omega$.
Let $M\in\N$ and $\gamma\in(0,1)$, using the notation and the results in Lemma~\ref{l:suppenergy}
we find $\delta_{\gamma,M}\in(0,1)$ such that
for every $\delta\in(0,\delta_{\gamma,M})$ if
$\tilde{u}_{\eps,\delta}^M:=(\widetilde{u_\eps^M})_\delta\in SBV(A)$
(for every $\eps$ small enough) such that
\[
H_\delta(\tilde u_{\eps,\delta}^M,A)\leq \Functeps(u_\eps^M,v_\eps,A)
\leq\Functeps(u_\eps,v_\eps,A)\,.
\]
Thus, by taking the inferior limit as $\eps\to0$, in view of Proposition~\ref{p:CFI_sci} we conclude that
for $\FailureS_\gamma=(\FailureS-\gamma)(1-\gamma)$
\begin{equation}
    \zeta_1(\delta)\int_A h_{\FailureS_\gamma}^{**}(|(u^M)'|)\dx + \zeta_1(\delta)(\varphi'(0^+))^{\sfrac12}\FailureS_\gamma|D^cu^M|(A) \leq \liminf_{\eps \to 0} \Functeps(u_{\eps},v_{\eps},A)\,.
\end{equation}
By letting first $\delta\to0$, $\gamma\to0$ and then $M\to\infty$, we conclude \eqref{e:liminf diffuse sigmabarra finito} in view of \eqref{e:limittruncat}, \eqref{e:hsigmacvxapprox} and Beppo Levi's theorem.
\end{proof}

We establish next the lower estimate for the surface part.
\begin{proposition}\label{p:lowerboundBVsfc}
Under the assumptions of Theorem~\ref{t:finale} with $\FailureS\in(0,\infty)$,
for every $(u_{\eps},v_{\eps}),\,(u,v)\in L^1(\Omega,\R^2)$ with
$(u_{\eps},v_{\eps})\to (u,v)$ in $L^1(\Omega,\R^2)$, we have
\begin{equation}\label{lbjump}
\int_{J_u{\cap A}}g(|[u]|)\dd\calH^{0}  \le
\liminf_{\eps\to0}\Functeps(u_\eps,v_\eps,A)\,,
\end{equation}
for every $A\in\calA(\Omega)$,
where $g$ is the function defined in \eqref{e:lagiyo}.
\end{proposition}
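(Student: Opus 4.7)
My strategy is a blow-up argument around the jump points of $u$: reduce to finitely many jumps, blow up each local contribution by $\eps$, and match the rescaled integrand with the infimum problem \eqref{e:lagicappucciobrodo} defining $g$.

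Assume the $\liminf$ in \eqref{lbjump} is finite and extract a subsequence attaining it. By Theorem~\ref{t:cptness sigma finito}, $v=1$ $\calL^1$-a.e.\ on $A$ and $u \in GBV(A)$. Since $g \leq 2\Psi(1)$ by Proposition~\ref{p:lepropdig}(ii), $\sum_{x \in J_u \cap A} g(|[u](x)|) < \infty$ and it suffices to establish the inequality with $J_u \cap A$ replaced by any fixed finite subset $\{x_1,\dots,x_N\}$. Choose $\rho > 0$ so that the intervals $I_i := (x_i - \rho, x_i + \rho)$ are pairwise disjoint in $A$; truncating $u_\eps, u$ at level $M$ via \eqref{e:truncations} preserves the jumps for $M$ large and does not increase $\Functeps$, so we may assume $u_\eps \in L^\infty$ uniformly in $\eps$.

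Fix $\eta \in (0,1)$ small. From $\int_A \PotDann(1-v_\eps) \dx = O(\eps)$ one has $v_\eps \to 1$ in measure on $A$; combined with the $\calL^1$-a.e.\ convergence of $u_\eps$ along a subsequence, a standard Fubini-type extraction produces points $a_i^\eps \in (x_i - \rho, x_i - \rho/2)$ and $b_i^\eps \in (x_i + \rho/2, x_i + \rho)$ with $v_\eps(a_i^\eps), v_\eps(b_i^\eps) \geq 1 - \eta$, $u_\eps(a_i^\eps) \to c_i^-$, $u_\eps(b_i^\eps) \to c_i^+$, and $|c_i^+ - c_i^-| \to |[u](x_i)|$ as $\rho \to 0$. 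Blowing up by $\eps$ on each $(a_i^\eps, b_i^\eps)$ via $y := (x - a_i^\eps)/\eps$, $T_i^\eps := (b_i^\eps - a_i^\eps)/\eps$, $\gamma_i^\eps(y) := u_\eps(a_i^\eps + \eps y) - u_\eps(a_i^\eps)$, $\beta_i^\eps(y) := v_\eps(a_i^\eps + \eps y)$, a direct change of variables yields
\begin{equation*}
\Functeps(u_\eps, v_\eps, (a_i^\eps, b_i^\eps)) = \int_0^{T_i^\eps} \Big( \tfrac{\varphi(\eps f^2(\beta_i^\eps))}{\eps} |(\gamma_i^\eps)'|^2 + \tfrac{\PotDann(1-\beta_i^\eps)}{4} + |(\beta_i^\eps)'|^2 \Big) \dy\,.
\end{equation*}
For $\mu \in (0, \varphi'(0^+))$, the right-differentiability of $\varphi$ at $0$ yields $\eta_\mu > 0$ with $\varphi(s) \geq \mu s$ for $s \in [0, \eta_\mu]$. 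Split $(0, T_i^\eps)$ into $E_\eps := \{\eps f^2(\beta_i^\eps) \leq \eta_\mu\}$ and its complement: on $E_\eps$ the bulk integrand is bounded below by $\mu f^2(\beta_i^\eps) |(\gamma_i^\eps)'|^2$, and AM--GM on the three summands produces the square-root integrand appearing in the representation of $g_\eta$ (up to a factor $\sqrt{\mu/\varphi'(0^+)}$, arising from $\sqrt{c A + B} = \sqrt{c}\sqrt{A + B/c} \geq \sqrt{c}\sqrt{A+B}$ with $c = \mu/\varphi'(0^+)$); on the complement, $\beta_i^\eps$ is so close to $1$ that $\PotDann(1-\beta_i^\eps)$ is negligible, while Cauchy--Schwarz combined with the bound $\varphi(\eps f^2(\beta_i^\eps)) \geq \varphi(\eta_\mu)$ forces the increment of $\gamma_i^\eps$ on the complement to be infinitesimal as $\eps \to 0$. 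After reparametrizing $(0, T_i^\eps) \to (0, 1)$ (the square-root density being scale invariant), the pair $(\gamma_i^\eps, \beta_i^\eps)$ becomes an admissible competitor in $\calulu^\eta_{s_i^\eps}$ for \eqref{e:legieta}, with $s_i^\eps := |\gamma_i^\eps(T_i^\eps)|$, giving
\begin{equation*}
\liminf_{\eps \to 0} \Functeps(u_\eps, v_\eps, I_i) \geq \sqrt{\mu/\varphi'(0^+)} \, g_\eta(|c_i^+ - c_i^-|)\,.
\end{equation*}
Summing over $i$, using superadditivity of the $\liminf$ over the disjoint $I_i$, and sending in order $\mu \to \varphi'(0^+)^-$, $\rho \to 0$, and $\eta \to 0$ (the last via Proposition~\ref{p:lepropdig}(vi) and the Lipschitz continuity of $g$), I recover the bound on $\{x_1,\dots,x_N\}$; taking the supremum over finite subsets of $J_u \cap A$ completes the proof.

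The principal technical obstacle is the two-regime bookkeeping in the final step: the naive linearization $\varphi(\eps t)/\eps \to \varphi'(0^+) t$ holds only for $\eps t$ in a right neighborhood of $0$, so the domain must be split between the region $E_\eps$ where the linearization applies and its complement, where $\beta_i^\eps$ is forced so close to $1$ that $\PotDann(1-\beta_i^\eps)$ vanishes (making the natural cohesive density collapse) but where one must separately rule out, using the lower bound $\varphi(\eps f^2(\beta_i^\eps))/\eps \geq \varphi(\eta_\mu)/\eps$ and Cauchy--Schwarz, that a nontrivial fraction of the jump of $\gamma_i^\eps$ concentrates on this complement.
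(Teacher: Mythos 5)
Your overall strategy (localize near finitely many jumps, rescale by $\eps$, split according to whether the linearization $\varphi(s)\geq\mu s$ applies, and land on $g_\eta$ from \eqref{e:legieta}) is viable, and it is genuinely different from the paper's treatment of the case $\FailureS\in(0,\infty)$: the paper performs a double blow-up at a second scale $r_j\to0$ and then \emph{truncates the phase field} at the level $\delta_{\eps}$ where $\eps f^2(\delta_\eps)=\delta$, paying an extra potential term $\PotDann(1-\delta_j)\eta_j^{-1}\mathcal{L}^1(\{\tilde v_j\geq\delta_j\})$ that vanishes precisely because $\FailureS<\infty$ and $r_j\to0$; your route instead keeps a fixed localization scale $\rho$ and neutralizes the bad set by controlling the variation of $u_\eps$ there, which is essentially the device the paper uses in the $\FailureS=\infty$ case (the construction of $w_j$ in Proposition~\ref{p:lowerboundBVsfc sigmabarra infinito}). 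However, as written your key step contains a genuine error. On the complement $E_\eps^c=\{\eps f^2(\beta_i^\eps)>\eta_\mu\}$ the Cauchy--Schwarz bound gives
\begin{equation*}
\int_{E_\eps^c}|(\gamma_i^\eps)'|\,\dy\;\leq\;\big(\mathcal{L}^1(E_\eps^c)\big)^{\sfrac12}\Big(\tfrac{C\eps}{\varphi(\eta_\mu)}\Big)^{\sfrac12},
\end{equation*}
and $\mathcal{L}^1(E_\eps^c)$ is in general of order $T_i^\eps\sim\rho/\eps$ (the set where $v_\eps$ is close to $1$ covers most of $(a_i^\eps,b_i^\eps)$, and $u_\eps$ can carry an $O(\rho)$ amount of variation there whenever $u'\neq0$ near $x_i$). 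Hence the increment of $\gamma_i^\eps$ on $E_\eps^c$ is \emph{not} infinitesimal as $\eps\to0$; it is only $O\big((\rho/\varphi(\eta_\mu))^{\sfrac12}\big)$, uniformly in $\eps$. This has two consequences: (i) you cannot use the unmodified pair $(\gamma_i^\eps,\beta_i^\eps)$ with $s_i^\eps=|\gamma_i^\eps(T_i^\eps)|$ as a competitor, because on $E_\eps^c$ the $\calG$-integrand is not dominated by the energy (there $\PotDann(1-\beta)f^2(\beta)\approx\FailureS^2$ by \eqref{e:FailureS def}, while the energy only supplies $\varphi(\eta_\mu)/\eps$ in front of $|\gamma'|^2$); you must explicitly flatten $\gamma$ on $E_\eps^c$ (e.g.\ $\tilde\gamma:=\gamma(0)+\int_0^{\cdot}\gamma'\chi_{E_\eps}$), use the bound $\tfrac{\PotDann(1-\beta)}{4}+|\beta'|^2\geq\big(\PotDann(1-\beta)|\beta'|^2\big)^{\sfrac12}$ there, and accept a loss of jump amplitude equal to the displayed quantity; (ii) your order of limits fails: since $\eta_\mu\to0$ and hence $\varphi(\eta_\mu)\to0$ as $\mu\to\varphi'(0^+)^-$, sending $\mu\to\varphi'(0^+)$ before $\rho\to0$ makes the amplitude loss blow up and the bound degenerate to $g_\eta(0)=0$. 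The argument is repaired by taking the limits in the order $\eps\to0$, then $\rho\to0$ at fixed $\mu$ (using the Lipschitz continuity of $g$ and Proposition~\ref{p:lepropdig}(ii),(vi) to absorb the loss), then $\mu\to\varphi'(0^+)$, then $\eta\to0$; with these corrections your proof goes through and is a legitimate alternative to the paper's truncation-of-$v$ argument.
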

\begin{proof}
By superadditivity of the inferior limit it suffices to assume that $A\in\calA(\Omega)$ is an interval.
Moreover, we can suppose the inferior limit on the right hand side of \eqref{lbjump} to be finite, otherwise the claim is obvious.
Therefore, $(u_{\eps},v_{\eps})\in H^1(A,\R\times[0,1])$ for $\eps>0$ sufficiently small, $u\in GBV(A)$
and $v=1$ $\calL^1$-a.e. on $A$ by Theorem~\ref{t:cptness sigma finito}.

We claim that it is sufficient to show that if $u\in BV(A)$ we have
for every $x_0\in J_u\cap A$
\begin{equation}\label{e:blowup jump x}
g(|[u](x_0)|)\leq\liminf_{r\to 0}
\liminf_{\eps\to 0}\Functeps(u_\eps,v_\eps,I_r(x)),
\end{equation}
where $I_r(x)=(x-\sfrac r2,x+\sfrac r2)$.
Indeed, given \eqref{e:blowup jump x} for granted, if $u\in BV(A)$
we can find a nested sequence of finite sets $\{K_m\}_{m\in N}$ such that $\cup_{m\in \N}K_m=J_u\cap A$. Then, for every $m\in \N$, there is $r_m>0$ such that $\{I_r(x)\}_{x\in K_m}$ are disjoint and contained in $A$ for all $r\in (0,r_m)$. In particular, by the superadditivity of the inferior limit operator and by \eqref{e:blowup jump x}, for every $m\in \N$ we get
\begin{align*}
\int_{K_m}&g(|[u](x)|)\dd\calH^0(x) \leq
\sum_{x\in K_m} \liminf_{r\to 0}\liminf_{\eps\to 0} \Functeps(u_{\eps},v_{\eps},I_r(x))\\
&\leq \liminf_{r\to 0}\sum_{x\in K_m} \liminf_{\eps \to 0}\Functeps(u_{\eps},v_{\eps},I_r(x)) \leq\liminf_{\eps \to 0}\Functeps(u_{\eps},v_{\eps},A)\,.
\end{align*}
Taking the limit for $m\to \infty$ we conclude \eqref{lbjump} for $u\in BV(A)$.

Moreover, if $u\in GBV\setminus BV(A)$ then $J_u=\cup_{M\in\N}J_{u^M}$
and $[u^M](x_0)=[u](x_0)$ for $M\in\N$ sufficiently large. Noting that $\Functeps(u_{\eps}^M,v_{\eps},A)\leq\Functeps(u_{\eps},v_{\eps},A)$, we conclude
\eqref{lbjump} for $u$ thanks to \eqref{lbjump} for $u^M$ and Beppo Levi's theorem.
\medskip

To establish \eqref{e:blowup jump x} for $u\in BV(A)$ we argue by blow up.
We can restrict to a subsequence $\mathcal{F}_{\eps_k}(u_{\eps_k},v_{\eps_k})$ such that
$\displaystyle{\liminf_{\eps \to 0}
\Functeps(u_\eps,v_\eps,A)=\lim_{k\to \infty}
\mathcal{F}_{\eps_k}(u_k,v_k) 
<\infty}$,
where we have set $(u_k,v_k):=(u_{\eps_k},v_{\eps_k})$.
Next, consider an infinitesimal sequence of radii
$\{r_j\}_{j \in \mathbb{N}}$ such that
   \begin{equation*}
       \liminf_{r\to 0}\liminf_{k\to \infty} \Functepsk(u_k,v_k,I_r(x_0)) = \lim_{j\to \infty}\liminf_{k\to \infty} \Functepsk(u_k,v_k,I_{r_j}(x_0))\,,
   \end{equation*}
and select a subsequence $\{k_j\}_{j\in \mathbb{N}}$ such that $\eta_j:=\sfrac{\eps_{k_j}}{r_j } < \sfrac1j$,
\begin{equation*}
    | \Functepskj(u_{k_j},v_{k_j},I_{r_j}(x_0))- \liminf_{k\to \infty} \Functepsk(u_k,v_k, I_{r_j}(x_0))  |< \sfrac{1}{j}
\end{equation*}
and
\begin{equation}\label{e:convl1}
\|v_{k_j}-1\|_{L^1(\Omega)}+\|u_{k_j}-u\|_{L^1(\Omega)}<\sfrac{r_j}{j}\,
\end{equation}
for all $j\in \mathbb{N}$. Therefore, we have
\begin{equation}\label{e:oci}
    \liminf_{r\to 0}\liminf_{k\to \infty} \Functepsk(u_k,v_k,I_r(x_0))=\lim_{j\to \infty} \Functepskj(u_{k_j},v_{k_j},I_{r_j}(x_0)).
\end{equation}
For every $j\in \mathbb{N}$ define the pair $(\tilde{u}_j,\tilde{v}_j)\in H^1(I_1,\R\times[0,1])$ by
$\tilde{u}_j(x):=u_{k_j}(x_0+r_jx)$ and
$\tilde{v}_j(x):=v_{k_j}(x_0+r_jx)$ for all $x\in I_1$.
A change of variable yields that
 \begin{equation}\label{e:first reduction}
     \Functepskj(u_{k_j},v_{k_j},I_{r_j}(x_0))=
     \calG_j(\tilde{u}_j,\tilde{v}_j,I_1)
 \end{equation}
 where 
 $\calG_j:L^1(I_1,\R^2\to[0,\infty]$ is defined for every $(u,v)\in H^1(I_1,\R\times[0,1])$ by
   \begin{equation}\label{e:Gj}
\calG_j(u,v) = \int_{I_1}\left( 
\frac{\eta_j}{\eps_{k_j}}\varphi
(\eps_{k_j}f^2(v))|u'|^2 + \frac{\PotDann(1-v)}{4\eta_j} +\eta_j|v'|^2\right) \dx\,,
 \end{equation}
 and $\infty$ otherwise.
In addition, changing variables it is straightforward to check that inequality \eqref{e:convl1} implies that
 \begin{align}\label{e:conv vjtilde}
     \limsup_{j\to \infty}&(\|\tilde{u}_j - u_0\|_{L^1(I_1)} +\|\tilde{v}_{j}-1\|_{L^1(I_1)})\nonumber \\
     &\leq \limsup_{j\to \infty} r_j^{-1}(\|u_{k_j}-u\|_{L^1(\Omega)}+\|v_{k_j}-1\|_{L^1(\Omega)})=0\,,
 \end{align}
where $u_0\in BV(I_1)$ is given by
$u_0(x):=u(x_0^-)\chi_{(-\frac12,0)} +u(x_0^+)\chi_{[0,\frac12)}$, and
we have used that
$\displaystyle\lim_{r\to 0}\|u(x_0+rx)-u_0(x)\|_{L^1(I_1)} =0$.

With fixed $\delta \in (0,1)$, for all $\eps>0$ set
$\delta_{\eps}:=\sup\left\{t\in [0,1):\,
\eps f^2(t) \leq\delta\right\}$.
Recalling that $f(0)=0$ and $f(t)\to\infty$ as $t\to 1^-$, $\delta_\eps\in(0,1)$ is actually a maximum
by the continuity of $f$ on $[0,1)$. Moreover, we have for all $t\in (\delta_{\eps},1)$
\begin{equation}\label{e:bounddeltaciro}
\delta< \eps f^2(t)\,
\end{equation}
and actually
\begin{equation}\label{e:deltaeps}
\eps f^2(\delta_\eps)=\delta\,.
\end{equation}
In particular, the latter equation implies $\delta_\eps\to 1$ as $\eps\to 0$.

We show next that
\begin{equation}\label{e:bounddeltaciro bis}
\eps f^2(t)\leq\delta\,.
\end{equation}
for all $t\in [0,\delta_{\eps}]$ if $\eps$ is sufficiently small. Indeed, recalling that $f$ is non-decreasing
on $[\gamma,1)$ by (Hp~$1$), define $M:=\max_{[0,\gamma]}f$. Let $\eps_0>0$ be such that
$\delta_\eps\in(\gamma,1)$ and $M^2<\delta/\eps$ for all $\eps\in(0,\eps_0)$. Then, we estimate as follows:
$\max_{[0,\delta_\eps]}f^2=M^2\vee\max_{[\gamma,\delta_\eps]}f^2=M^2\vee f^2(\delta_\eps)=\frac\delta\eps$.

Then, for every $j\in \mathbb{N}$ define $\hat{v}_j=\tilde{v}_j\land \delta_j$, where $\delta_j:=\delta_{\eps_{k_j}}$. We have
\begin{align}\label{e:primaeq}
\calG_j(\tilde{u}_j,\hat{v}_j,I_1)&=
\calG_j(\tilde{u}_j,\tilde{v}_j,\{\tilde{v}_j<\delta_{j}\})\nonumber\\
&+\int_{\{\tilde{v}_j\geq \delta_{j}\}}
\frac{\eta_j}{\eps_{k_j}}\varphi(\delta)
|\tilde{u}'_{j}|^2 \dx +\frac{\PotDann(1-\delta_{j})}{4\eta_j} \mathcal{L}^1(\{\tilde{v}_j\geq \delta_{j}\})\,.
\end{align}
We analyze the first term in the last line of \eqref{e:primaeq}: we employ \eqref{e:bounddeltaciro} 
to infer
 \begin{align}\label{e:secondeq}
\int_{\{\tilde{v}_j\geq \delta_{j}\}}
\frac{\eta_j}{\eps_{k_j}}\varphi(\delta)|\tilde{u}'_{j}|^2\dx
\leq \int_{\{\tilde{v}_j\geq \delta_{j}\}}\frac{\eta_j}{\eps_{k_j}}\varphi
(\eps_{k_j} f^2(\tilde{v}_j))|\tilde{u}'_{j}|^2 \dx\leq \calG_j(\tilde{u}_j,\tilde{v}_j,\{\tilde{v}_j\geq \delta_{j}\}) .
 \end{align}
 Instead, for what the second term in the last line of \eqref{e:primaeq} is concerned, being $\FailureS$ finite,
 the definition of $\eta_j$ and the identity in \eqref{e:deltaeps}
 imply that
\begin{equation}\label{e:cicolella}
     \limsup_{j\to \infty}\frac{\PotDann(1-\delta_{j})}{\eta_j} \mathcal{L}^1(\{\tilde{v}_j\geq \delta_{j}\})\leq
     \FailureS^2\cdot\lim_{j\to \infty}\frac{Q(1-\delta_{j})}{\eps_{k_j}}r_j
=\FailureS^2\cdot\lim_{j\to \infty}\frac{\hatf(\delta_j)}{\delta} r_j=0\,.
 \end{equation}
 By taking into account $\eqref{e:bounddeltaciro}$ and
that $\hat{v}_j\leq \delta_{j}$
$\calL^1$-a.e. on $I_1$, we have by \eqref{e:bounddeltaciro bis}
 \begin{equation}\label{e:terzaeq}
 \frac{1}{\eps_{k_j}}
 \varphi(\eps_{k_j}f^2(\hat{v}_j))
 \geq(1-\theta(\delta))\varphi'(0^+)
 f^2(\hat{v}_j)\,,
 \end{equation}
 where $\theta(\delta)\to 0^+$ as $\delta\to 0^+$.
 In particular, by \eqref{e:primaeq}-\eqref{e:terzaeq}
 we conclude that
 \begin{equation}\label{e:third reduction}
\liminf_{j\to\infty}\calG_j(\tilde{u}_j,\tilde{v}_j)\geq
\liminf_{j\to\infty}\calG_j
(\tilde{u}_j,\hat{v}_j)\geq(1-\theta(\delta))
\liminf_{j\to\infty}\widehat{\callF}_{\eta_j}(\tilde{u}_j,\hat{v}_j,I_1)\,,
 \end{equation}
 where $\widehat{\callF}_{\eta_j}:L^1(I_1,\R^2)\to[0,\infty]$ is the functional
 defined for every $(u,v)\in  H^1(I_1,\R\times [0,1])$ by
 \begin{equation}\label{e:second reduction}
\widehat{\callF}_{\eta_j}(u,v) =
\int_{I_1}\left( \eta_j \varphi'(0^+)
f^2(v)|u'|^2 + \frac{\PotDann(1-v)}{4\eta_j} +\eta_j|v'|^2\right) \dx\,,
 \end{equation}
 and $\infty$ otherwise.

Up to extracting a subsequence not relabeled, we may use \eqref{e:conv vjtilde} 
to find points
$-\frac12<x_1<0<x_2<\frac12$ such that $\tilde{v}_j(x_i)\to 1$ (so that $\hat{v}_j(x_i)\to 1$),
$\tilde{u}_j(x_i)\to u_0(x_i)$, $i\in\{1,2\}$, and moreover that
\[
\liminf_{j\to\infty}\widehat{\callF}_{\eta_j}(\tilde{u}_j,\hat{v}_j,I_1)
=\lim_{j\to\infty}\widehat{\callF}_{\eta_j}(\tilde{u}_j,\hat{v}_j,I_1)\,.
\]
In particular, with fixed any $\eta>0$, for all
$j$ sufficiently large by Cauchy-Schwartz inequality
we get for $g_\eta$ defined in \eqref{e:legieta}
\begin{align}\label{e:stima hatFj}
\widehat{\callF}_{\eta_j}(\tilde{u}_j,\hat{v}_j,I_1)&\geq
 \int_{x_1}^{x_2}\Big(\PotDann(1-\hat{v}_j)\big(
\varphi'(0^+)f^2(\hat{v}_j)|\tilde{u}_j'|^2+|\hat{v}_j|^2\big)\Big)^{\sfrac12}\dx\notag\\
&\geq g_\eta(|\tilde{u}_j(x_2)-\tilde{u}_j(x_1)|)\,.
\end{align}
In deriving the last inequality, we have used that the functional to be minimized in the definition of $g_\eta$ is invariant under reparametrization.

The conclusion in \eqref{e:blowup jump x}
then follows by \eqref{e:first reduction}, 
\eqref{e:third reduction}, \eqref{e:stima hatFj} and item (vi) in Proposition~\ref{p:lepropdig}.
\end{proof}
We gather the lower estimates on the diffuse and surface parts obtained in Propositions~\ref{p:diffuseliminf sigmabarra finito} and \ref{p:lowerboundBVsfc}, respectively, via a standard measure theoretic argument.
\begin{proposition}[Lower Bound inequality]\label{p:lbcompBV}
Under the assumptions of Theorem~\ref{t:finale} with $\FailureS\in(0,\infty)$,
for every $(u_{\eps},v_{\eps}),\,(u,v)\in L^1(\Omega,\R^2)$ with $(u_{\eps},v_{\eps})\to (u,v)$ in $L^1(\Omega,\R^2)$ we have
\begin{equation}\label{e:lbBV}
F_{\,\FailureS}(u,v) \le \liminf_{\eps\to0} \Functeps(u_{\eps}, v_{\eps}),
\end{equation}
where $\Functeps$ and $F_{\,\FailureS}$ are defined in \eqref{functeps} and \eqref{F0},
respectively.
\end{proposition}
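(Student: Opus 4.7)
The plan is to combine the two one-sided estimates --- Proposition~\ref{p:diffuseliminf sigmabarra finito} for the diffuse part and Proposition~\ref{p:lowerboundBVsfc} for the jump part --- by exploiting the additivity of $\Functeps(u_\eps,v_\eps,\cdot)$ on disjoint open sets together with the superadditivity of the inferior limit. In the one-dimensional setting this dispenses with any weak$^*$ measure compactness machinery. First I assume the right-hand side of \eqref{e:lbBV} is finite, otherwise the bound is trivial; by Theorem~\ref{t:cptness sigma finito} this forces $v=1$ $\calL^1$-a.e.\ on $\Omega$ and $u\in GBV(\Omega)$, so that $F_{\,\FailureS}(u,v)=\Functu_{\,\FailureS}(u)$.

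I would begin with the case $u\in BV(\Omega)$, for which $J_u$ is at most countable. Fix $N\in\N$ and $\{x_1,\ldots,x_N\}\subset J_u\cap\Omega$; for all $r>0$ sufficiently small the intervals $I_r(x_i):=(x_i-\sfrac{r}{2},x_i+\sfrac{r}{2})$ are pairwise disjoint, contained in $\Omega$, and each meets $J_u$ only in $x_i$. Setting $B_r:=\Omega\setminus\bigcup_{i=1}^N\overline{I_r(x_i)}$, a finite disjoint union of open intervals, the additivity of $\Functeps(u_\eps,v_\eps,\cdot)$ on disjoint open sets together with the superadditivity of the inferior limit gives
\begin{equation*}
\liminf_{\eps\to 0}\Functeps(u_\eps,v_\eps)\geq \liminf_{\eps\to 0}\Functeps(u_\eps,v_\eps,B_r)+\sum_{i=1}^N\liminf_{\eps\to 0}\Functeps(u_\eps,v_\eps,I_r(x_i)),
\end{equation*}
which, after applying Proposition~\ref{p:diffuseliminf sigmabarra finito} on $B_r$ and Proposition~\ref{p:lowerboundBVsfc} on each $I_r(x_i)$, is bounded below by
\begin{equation*}
\int_{B_r}\hsigmabarra^{**}(|u'|)\dx+(\varphi'(0^+))^{\sfrac12}\FailureS\,|D^cu|(B_r)+\sum_{i=1}^N g(|[u](x_i)|).
\end{equation*}

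Now I send $r\to 0^+$: as the sets $B_r$ monotonically increase to $\Omega\setminus\{x_1,\ldots,x_N\}$, and since neither $\calL^1$ nor the Cantor measure $|D^cu|$ charges finite sets, monotone convergence yields the same lower bound with $B_r$ replaced by $\Omega$. Taking $N\to\infty$ and invoking Beppo Levi on the nonnegative sum over $J_u$ produces \eqref{e:lbBV} whenever $u\in BV(\Omega)$. For $u\in GBV(\Omega)\setminus BV(\Omega)$, I would apply the bound just established to the truncations $u^M$, use the monotonicity $\Functeps(u_\eps^M,v_\eps)\leq\Functeps(u_\eps,v_\eps)$, and pass to the limit $M\to\infty$ by means of the properties $J_u=\bigcup_M J_{u^M}$, $(u^M)'=u'$ on $\{|u|\leq M\}$, $[u^M]=[u]$ on $J_{u^M}$ for $M$ large, and $|D^cu|(A)=\sup_M|D^cu^M|(A)$ recalled in Section~\ref{s:technical results}. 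The substantive work is already packaged in Propositions~\ref{p:diffuseliminf sigmabarra finito} and \ref{p:lowerboundBVsfc}; what remains is essentially bookkeeping, the only delicate point being the passage $r\to 0^+$, which rests on the elementary observation that $|D^cu|$ assigns zero mass to singletons, so that $|D^cu|(B_r)\to|D^cu|(\Omega)$ as the neighbourhoods of the jump points shrink away.
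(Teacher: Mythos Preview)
Your argument is correct and complete; it takes a genuinely different route from the paper's proof. The paper defines the superadditive set function $\mu(A):=\liminf_{\eps\to0}\Functeps(u_\eps,v_\eps,A)$ and the reference Radon measure $\nu:=\calL^1\res\Omega+\calH^0\res J_u+|D^cu|$, splits $\Omega$ into the Borel sets carrying each of the three mutually singular parts of $\nu$, and then applies the abstract supremum-of-densities lemma \cite[Proposition~1.16]{Braides1998} to combine the two inequalities $\mu(A)\geq\int_A\psi_i\,\dd\nu$ coming from Propositions~\ref{p:diffuseliminf sigmabarra finito} and \ref{p:lowerboundBVsfc}. Your approach instead exploits the one-dimensional structure directly: you isolate finitely many jump points by small intervals, apply the surface bound there and the diffuse bound on the complement, and pass to the limit first in $r$ (using that $|D^cu|$ is non-atomic) and then in $N$. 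This is more elementary and fully self-contained, whereas the paper's measure-theoretic argument is dimension-independent and transfers verbatim to the vector-valued setting in \cite{ContiFocardiIurlano2022,Colasanto2024}.

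One small inaccuracy: your claim that for small $r$ each $I_r(x_i)$ meets $J_u$ only in $x_i$ need not hold (think of an accumulation point of $J_u$). Fortunately the argument does not use this; Proposition~\ref{p:lowerboundBVsfc} gives $\liminf_\eps\Functeps(u_\eps,v_\eps,I_r(x_i))\geq\int_{J_u\cap I_r(x_i)}g(|[u]|)\dd\calH^0\geq g(|[u](x_i)|)$ by nonnegativity of $g$, which is all you need. Simply drop that clause.
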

\begin{proof}
Without loss of generality, we assume that the inferior limit in \eqref{e:lbBV} to be finite.
Thus, $(u_{\eps},v_\eps)\in H^1(\Omega,\R\times[0,1])$ for every $\eps>0$
sufficiently small, and moreover $u\in GBV(\Omega)$ and $v=1$ $\calL^1$-a.e. on $\Omega$
in view of Theorem~\ref{t:cptness sigma finito}.
It is sufficient to establish \eqref{e:lbBV} if $u\in BV(\Omega)$ by employing a standard trucation argument and the fact that the functionals $\Functeps$ are decreasing by truncation.

Thus, for $u\in BV(\Omega)$, we may consider the superadditive set function defined on $\calA(\Omega)$ by
\begin{equation*}
\mu(A):=\displaystyle\liminf_{\eps\to0}\Functeps(u_\eps,v_\eps,A),
\end{equation*}
if $A\in\calA(\Omega)$, and the Radon measure
\begin{equation*}
\nu:=\calL^1\res\Omega+\calH^{0}\res J_u+|D^cu|\,.
\end{equation*}
In particular, $\nu$ is the sum of three Radon measures concentrated
on mutually disjoint Borel sets $U_1,\,U_2,\,U_3$ partitioning $\Omega$.
Then, we may define two Borel functions $\psi_1,\psi_2:\Omega \to [0,\infty]$ by
\begin{equation*}
\psi_1(x)=
\begin{cases}
g(|[u](x)|)  & \textup{on $U_2$} \\
0  & $\textup{otherwise}$
\end{cases}
\qquad \textup{and}\qquad
\psi_2(x)=
\begin{cases}
\hsigmabarra^{**}(|u'(x)|)  & \textup{on $U_1$} \\
(\varphi'(0^+))^{\sfrac{1}{2}}\FailureS  & $\textup{on $U_3$}$ \\
0  & $\textup{otherwise.}$
\end{cases}
\end{equation*}
Propositions~\ref{p:diffuseliminf sigmabarra finito} and \ref{p:lowerboundBVsfc}
then imply for $i=1,2$ and  $A\in\calA(\Omega)$
\[
\mu(A)\geq\int_A\psi_i \dd\nu\,,
\]
thus \cite[Proposition~1.16]{Braides1998} yields that
\[
\mu(\Omega)\geq\int_\Omega(\psi_1\vee\psi_2)\text{d}\nu
=F_{\,\FailureS}(u,1)\,.
\qedhere
\]
\end{proof}

To show the upper bound inequality we follow in part the strategy in \cite[Proposition~5.2]{ContiFocardiIurlano2016,ContiFocardiIurlano2022}
and take advantage of the one-dimensional setting.
More generally, we establish it for the perturbed family of functionals
$\Functeps^{\kappa}:L^1(\Omega,\R^2)\times\calA(\Omega)\to[0,\infty]$
\begin{equation}\label{e:F eps kappa}
\Functeps^{\kappa}(u,v,A):=\Functeps(u,v,A)+\kappa_\eps\int_A|u'|^2\dx\,,
\end{equation}
if $(u,v)\in H^1(\Omega,\R\times[0,1])$ and $\infty$ otherwise,
where $\kappa_\eps=o(\eps)$ as $\eps\to 0$ and $\kappa_\eps\geq 0$, in order to gain coercivity for applications to Dirichlet boundary value problems (see Section~\ref{ss:Dirichlet}). We denote by ${\displaystyle{F'':=\Gamma(L^1)\text{-}\limsup_{\eps\to0}\Functeps^{\kappa}}}$.

We divide the argument into several steps, by providing first a rough bound for the diffuse part in the case of Sobolev functions and then obtaining the sharp bound optimizing upon the former rough one through a relaxation argument. The extension of the upper bound to piecewise Sobolev functions is done thanks to an explicit construction matching the surface energy density at jump points, finally the sharp bound for any $BV$ function is obtained again through relaxation.

In what follows it is convenient to consider the functional $H:L^1(\Omega)\times\calA(\Omega)\to[0,\infty]$ defined by
\begin{equation}\label{e:H bis}
    H(u,A):=\int_A\hsigmabarra(|u'|)\dx\,,
\end{equation}
if $u\in W^{1,1}(\Omega)$, and $\infty$ otherwise.
Given the continuity of $\hsigmabarra$ (cf. item (i) in Lemma~\ref{p:proprieta hsigma}) and the growth conditions in \eqref{minorconlin}, the functional $H$ in \eqref{e:H bis} is continuous with respect to the strong convergence in $W^{1,1}(\Omega)$. The same remark applies to the functional $\Functu_{\FailureS}$. We will take advantage of this fact by proving in several instances the upper bound inequality (cf. \eqref{e:limsuppone} below) on classes of functions which are dense in $L^1(\Omega)$ or in a stronger topology, and along which $H$ and $\Functu_{\FailureS}$ are continuous, respectively. The $L^1$ lower semicontinuity of $F''$ will allow to extend the validity of \eqref{e:limsuppone} to functions in the $L^1$ closure of such dense classes.
\begin{proposition}[Upper Bound inequality]\label{p:uppertilde sigma finito}
Under the assumptions of Theorem~\ref{t:finale} with $\FailureS\in(0,\infty)$,
for every $(u,v)\in L^1(\Omega;\R^2)$
\begin{equation}\label{e:limsuppone}
F''(u,v) \leq F_{\,\FailureS}(u,v)\,.
\end{equation}
\end{proposition}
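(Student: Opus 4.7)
The plan is to build a recovery sequence in three layers --- bulk, jumps, and Cantor part --- combining explicit pointwise constructions with the $L^1$-lower semicontinuity of $F''$ to upgrade each local bound to its sharp form. Standard reductions let us assume $v\equiv 1$ and $F_{\,\FailureS}(u,1)<\infty$; the truncation $u\mapsto u^M$ decreases both $\Functeps^\kappa$ and $F_{\,\FailureS}$, so combined with lower semicontinuity of $F''$ it suffices to treat $u\in BV(\Omega)\cap L^\infty(\Omega)$, extending to $GBV(\Omega)$ at the very end.

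For the absolutely continuous part, given $u\in H^1(\Omega)$ and a macroscopically slow function $\tau:\Omega\to(0,\infty)$, I would define $v_\eps(x)\in(0,1)$ implicitly by $\eps f^2(v_\eps(x))=1/\tau(x)$, i.e.\ $Q(1-v_\eps)=\eps\tau\hatf(v_\eps)$, and take $u_\eps=u$. Using (Hp~$2$) and $\hatf(1)=1$, the two principal integrands satisfy pointwise
\[
\varphi(\eps f^2(v_\eps))\to \varphi(1/\tau(x)), \qquad \frac{\PotDann(1-v_\eps)}{4\eps}\to\frac{\FailureS^2}{4}\tau(x),
\]
the gradient term $\eps|v_\eps'|^2$ is negligible since $v_\eps$ varies on a macroscopic scale, and the $\kappa_\eps$-perturbation vanishes because $\kappa_\eps\to 0$ with $u_\eps=u$ fixed. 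Optimizing $\tau$ pointwise via a piecewise-constant diagonal argument yields $\limsup_{\eps\to 0}\Functeps^\kappa(u,v_\eps)\leq \int_\Omega h_{\FailureS}(|u'|)\dx$; the $L^1$-lower semicontinuity of $F''$ and a relaxation argument then automatically replace $h_{\FailureS}$ by its convex envelope $h_{\FailureS}^{**}$.

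Near each jump point $x_i\in J_u$ with $s_i:=|[u](x_i)|$, I would work with almost-optimal competitors $(\gamma^{(i)},\beta^{(i)})\in\mathfrak{U}^\eta_{s_i}$ for $g_\eta(s_i)$ drawn from the free-parameter representation \eqref{e:lagicappucciobrodo}, rescale to a thin layer $I^{(i)}_\eps$ of length $\eps T$ centred at $x_i$, and set $u_\eps(x):=\gamma^{(i)}((x-x_i)/\eps+T/2)+C_i$, $v_\eps(x):=\beta^{(i)}((x-x_i)/\eps+T/2)$ on $I^{(i)}_\eps$. A change of variables delivers the energy contribution $g_\eta(s_i)+o(1)$; letting $\eta\to 0$ via Proposition~\ref{p:lepropdig}(vi) recovers $g(s_i)$. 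Gluing the bulk construction on $\Omega\setminus\bigcup_i I^{(i)}_\eps$ with the local jump constructions handles any $u\in SBV(\Omega)\cap L^\infty$ with finitely many jumps.

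The general case follows by density: approximate $u\in BV(\Omega)$ in $L^1$ by $u_k\in SBV(\Omega)$ having finitely many jumps, chosen so that $\Functu_{\FailureS}(u_k)\to\Functu_{\FailureS}(u)$. The Cantor part is the delicate step: replace $D^cu$ by atomic measures $\sum_j s_{k,j}\delta_{x_{k,j}}$ concentrated on a finite set with mesh tending to zero and total mass $|D^cu|(\Omega)$. Since Proposition~\ref{p:lepropdig}(iv) gives $g(s)/s\to(\varphi'(0^+))^{\sfrac12}\FailureS$ as $s\to 0^+$, the surface cost $\sum_j g(s_{k,j})$ converges to $(\varphi'(0^+))^{\sfrac12}\FailureS\,|D^cu|(\Omega)$. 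A diagonal extraction together with the $L^1$-lower semicontinuity of $F''$ would close the $BV$ case, and the initial truncation argument extends the conclusion to $GBV(\Omega)$. The two main obstacles I anticipate are the careful gluing of $v_\eps$ across $\partial I^{(i)}_\eps$ --- where the bulk value of $v_\eps$ (depending on $\tau$) must meet the boundary values of $\beta^{(i)}$, a mismatch handled precisely by the $\mathfrak{U}^\eta$ competitors of Proposition~\ref{p:lepropdig}(vi) --- and the fact that the direct pointwise construction produces only $h_{\FailureS}$, so that the sharp convex envelope $h_{\FailureS}^{**}$ must emerge via relaxation from the $L^1$-lower semicontinuity of $F''$ rather than from an explicit profile.
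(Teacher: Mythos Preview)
Your outline is essentially correct and follows the same architecture as the paper (bulk profile via the implicit equation $\eps f^2(v_\eps)=1/\tau$, jump profiles rescaled from competitors in the cell formula, then density and relaxation), but there are two genuine differences in execution worth noting.

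First, for the passage from $SBV$ to $BV$ the paper does \emph{not} approximate $D^cu$ by atoms. Instead it invokes the abstract relaxation theorem of Bouchitt\'e--Braides--Buttazzo to compute the $L^1$-lower semicontinuous envelope of the functional equal to $\Functu_{\FailureS}$ on $SBV$ and $+\infty$ elsewhere; the resulting inf-convolutions $\hsigmabarra^{**}\!\triangledown g^0$ and $(\hsigmabarra^{**})^\infty\!\triangledown g$ are then identified with $\hsigmabarra^{**}$ and $g$ using $g^0(s)=(\hsigmabarra^{**})^\infty(s)=(\varphi'(0^+))^{1/2}\FailureS\,s$. Your atomic approximation of $D^cu$ is a legitimate alternative (and relies on exactly the same derivative identity via $g(s)/s\to(\varphi'(0^+))^{1/2}\FailureS$), but note that you must simultaneously truncate the small original jumps to get finitely many, and verify that the added small jumps have amplitudes tending to zero uniformly while their total variation converges to $|D^cu|(\Omega)$.

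Second, for the jump construction the paper first reduces to $u$ piecewise constant in a neighbourhood of each jump (so that the bulk recovery has $v_\eps\equiv 1$ there) and then uses competitors from the free-length representation of Proposition~\ref{p:lepropdig}(v), which satisfy $\beta_\eta(0)=\beta_\eta(T_\eta)=1$ exactly. This makes the gluing trivial and avoids the $\calulu^\eta$ route you propose.

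One point in your sketch is not justified: the claim that $\eps|v_\eps'|^2$ is negligible for a \emph{smoothly} varying $\tau$ does not follow without further hypotheses on $Q$ (differentiating the implicit relation produces factors of $f'(v_\eps)$ whose behaviour near $1$ is uncontrolled under (Hp~1)). The paper avoids this entirely by working with piecewise \emph{constant} $\tau$ from the start (constant $v_\eps$ on each affine piece, hence $v_\eps'=0$ there) and handling the transitions by a partition-of-unity argument on a fixed macroscopic scale $\delta$, so that $\eps|v_\eps'|^2\leq C\eps/\delta^2\to 0$; an additional reduction to piecewise affine functions with alternating zero-slope pieces is needed to control the potential term $\PotDann(1-v_\eps)/\eps$ in the transition zones. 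Your ``piecewise-constant diagonal argument'' should be made to mean exactly this. Finally, since $h_{\FailureS}$ has only linear growth at infinity, the natural domain for the bulk step is $W^{1,1}(\Omega)$ rather than $H^1(\Omega)$.
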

\begin{proof}
It is clearly sufficent to assume $v=1$ $\calL^1$-a.e. on $\Omega$. We split the proof in different steps.

\noindent{\bf Step 1.} If $u(x)=\xi x+\eta$ is affine on $\Omega$ then for every interval $I\subseteq\Omega$
\begin{equation}\label{e:rough upper bound affine}
   F''(u,1,I)\leq
   H(u,I)=\hsigmabarra(|\xi|)\calL^1(I)\,.
\end{equation}
Let $\tau_\xi\in[0,\infty)$ be such that $\hsigmabarra(|\xi|)=\varphi(\sfrac1{\tau_\xi})\xi^2 +\frac{\FailureS^2}{4}\tau_\xi$
(with the convention that $\varphi(\sfrac1\tau)$ is extended by continuity as $\varphi(\infty)\in(0,\infty)$ in $\tau=0$).
Then set $u_\eps=u$ and $v_\eps=\lambda_\eps$ where
$\eps f^2
(\lambda_\eps)=\eps\frac{\hatf(\lambda_\eps)}{Q(1-\lambda_\eps)}=\sfrac1{\tau_\xi}$
(note that $\lambda_\eps=1$ if $\tau_\xi=0$). By the very definition
$\lambda_\eps\to 1$ as $\eps\to 0$. Therefore
\begin{align*}
\Functeps^\kappa(u_\eps,v_\eps,I)&=\Big(\varphi(\sfrac1{\tau_\xi})\xi^2+\kappa_\eps\xi^2+\frac{\PotDann(1-\lambda_\eps)}{4\eps}\Big)\calL^1(I)\\
&= \Big(\varphi(\sfrac1{\tau_\xi})\xi^2+\kappa_\eps\xi^2
+\frac{\PotDann(1-\lambda_\eps)}{4 Q(1-\lambda_\eps)}\hatf(\lambda_\eps)\tau_\xi\Big)\calL^1(I)\,,
\end{align*}
the conclusion then follows by taking into account \eqref{e:FailureS def},
and the fact that $\hatf(\lambda_\eps)\to \hatf(1)=1$.
\medskip

\noindent{\bf Step 2.} The inequality in \eqref{e:rough upper bound affine} holds if $u$ is piecewise affine on $\Omega=(a,b)$, that is $u\in C^0(\Omega)$ and there are $\{a_i\}_{i=1}^N$ where $a_1=a$, $a_N=b$ and $a_i<a_{i+1}$ for $i\in\{1,\ldots N-1\}$ such that
\[
u(x)=\sum_{i=1}^N(\xi_ix+\eta_i)\chi_{[a_i,a_{i+1})}(x)\,.
\]
First, we show explicitly the claim for the (dense) subclass of piecewise affine functions
$u$ such that $u'=0$ on $(a_i,a_{i+1})$ for $i\in\{2,\ldots,N-1\}$ even.
Let $\delta\in(0,\min_i(a_{i+1}-a_i))$, set $\Omega_{i,\delta}:=(a_i-\sfrac\delta2,a_{i+1}+\sfrac\delta2)$, and let $\{\phi_i\}_{i=1}^N$ be a partition of unity subordinated to the covering $\{\Omega_{i,\delta}\}_{i=1}^N$ of $\Omega$, i.e. $\phi_i\in C_c^\infty(\Omega_{i,\delta},[0,1])$, $\phi_i|_{(a_i+\sfrac\delta2,a_{i+1}-\sfrac\delta2)}=1$,
$\max_{1\leq i\leq N}\|\phi_i'\|_{C^0(\R)}\leq\sfrac C\delta$, for some $C>0$, and $\sum_{i=1}^N\phi_i(x)=1$ for every $x\in\Omega$.

Set $u_\eps:=u$ and $v_\eps:=\sum_{i=1}^N\lambda_{\eps,i}\phi_i$, where
$\eps f^2(\lambda_{\eps,i})=\sfrac1{\tau_{\xi,i}}$ for every $i\in\{1,\ldots,N\}$
with $\lambda_{\eps,i}\in[0,1]$ (using the notation introduced in the previous step).
In particular, $\lambda_{\eps,i}=1$ for $i\in\{2,\ldots,N-1\}$ even because
$\xi_i=0$ and $\hsigmabarra(0)=0$. Therefore, from Step~1 we get for
$\delta$ sufficiently small
\begin{align}\label{e:stima piecewise affine}
\Functeps^\kappa(u_\eps,v_\eps,I)&\leq H(u,I)+o_\eps
+(\delta\varphi(\infty)+\kappa_\eps\calL^1(\Omega))\sum_{i=1}^N|\xi_i|^2\nonumber\\&
+CN\frac\eps\delta
+C\sum_{i=2}^{N-1}\int_{a_i-\sfrac\delta2}^{a_i+\sfrac\delta2}
\frac{Q(1-v_\eps)}\eps \dx\,,
\end{align}
where $C>0$ is a universal constant, $o_\eps\to 0$ as $\eps\to0$,
and we have used \eqref{e:FailureS def} and
the convergence $v_\eps\to1$ in $L^\infty(\Omega)$.
Next, note that $\phi_{i-1}(x)+\phi_i(x)=1$ on $(a_i-\sfrac\delta2,a_i+\sfrac\delta2)$ for $\delta$ sufficiently small, so that on such a set $1-v_\eps=(1-\lambda_{\eps,i-1})\phi_{i-1}+(1-\lambda_{\eps,i})\phi_i$.
In addition, using that $u'=0$ on $(a_i,a_{i+1})$ for even $i\in\{2,\ldots,N-1\}$, then $1-v_\eps=(1-\lambda_{\eps,i})\phi_i$ if $i\geq3$ is odd, and
$1-v_\eps=(1-\lambda_{\eps,i-1})\phi_{i-1}$ if $i\geq2$ is even.
Hence, if $i$ is odd on $(a_i-\sfrac\delta2,a_i+\sfrac\delta2)$ we have
\[
Q(1-v_\eps)=Q((1-\lambda_{\eps,i})\phi_i)\leq Q(1-\lambda_{\eps,i})=\eps\tau_{\xi_i}
\hatf(\lambda_{\eps,i})\,,
\]
where we have used that $Q$ is non-decreasing in a neighbourhood of the origin (cf.~(Hp~1)),
and  $\phi_i\in[0,1]$. Clearly, an analogous statement holds if $i$ is even.
Thanks to this estimate and to \eqref{e:stima piecewise affine} we conclude that
\begin{align*}
\Functeps^\kappa(u_\eps,v_\eps,I)&\leq H(u,I)+o_\eps
+(\delta\varphi(\infty)+\kappa_\eps\calL^1(\Omega))\sum_{i=1}^N|\xi_i|^2\nonumber\\&
+CN\frac{\eps}{\delta}+C\delta\sum_{i=1}^N\tau_{\xi_i}
\hatf(\lambda_{\eps,i})\,,
\end{align*}
from which we conclude \eqref{e:rough upper bound affine} by taking first the superior limit as $\eps\to0$ and then as $\delta\to0$.

Finally, given any piecewise affine function $u$ with an underlying partition
$\{a_1,..,a_N\}$, consider
\begin{equation*}
    u_j(x)=
    \begin{cases}
        u(x)\;&\textup{if $x \in [a_1,a_2]$} \\
        u(a_i)\;&\textup{if $x \in [a_i,a_i+\sfrac1j]$} \\
        u(a_{i+1})+\frac{u(a_{i+1})-u(a_i)}{a_{i+1}-a_i-\sfrac1j}(x-a_{i+1}) \;&\textup{if $x\in [a_i+\sfrac1j,a_{i+1}]$} \\
    \end{cases}
\end{equation*}
for $i\in \{2,..,N-1\}$, where $0<\sfrac1j<\min_i(a_{i+1}-a_i)$.
It is easy to show that $u_j\to u$ in $L^\infty(\Omega)$ and $u'_j \to u'$ in $L^p(\Omega)$ for every $p\in [1,\infty)$ as $j\to\infty$. We conclude using the density argument explained at
the beginning of the proof.
\medskip

\noindent{\bf Step~3.} The inequality in  \eqref{e:limsuppone} holds on $W^{1,1}(\Omega)$.

\noindent We use first the continuity of $H$ with respect to the strong $W^{1,1}(\Omega)$ convergence together with the density of piecewise affine functions in $W^{1,1}(\Omega)$ (this is easily established for the dense class of smooth functions, see also \cite[Proposition~2.1 in Chapter~X]{ekeland1999convex}) to extend the validity of inequality \eqref{e:rough upper bound affine} to every $u\in W^{1,1}(\Omega)$ via the density argument explained at the beginning of the proof.

We obtain the bound $F''(u,1)\leq F_{\,\FailureS}(u,1)$ for $u\in W^{1,1}(\Omega)$ using a classical relaxation result (cf. for instance \cite[Theorem~4.4.1 and Remark~4.4.5]{Buttazzo89},
\cite[Statement III.7]{AcerbiFusco1984semicontinuity}).
\medskip

\noindent{\bf Step~4.} The inequality in \eqref{e:limsuppone} holds if $u$ is $SBV(\Omega)$ with $\calH^0(J_u)<\infty$.

We explicit the construction first for
$u\in SBV(\Omega)$ with $J_u=\{x_0\}$ such that
$u=u(x^-_0)$ on $(x_0-2\lambda,x_0)$ and $u=u(x^+_0)$ on $[x_0,x_0+2\lambda)$ for some $\lambda>0$ with
$(x_0-2\lambda,x_0+2\lambda)\subseteq \Omega$, where $u(x_0^-),u(x_0^+)$ are respectively the left and the right limit of $u$ in $x_0$ (without loss of generality we  suppose that $u(x_0^-)<u(x_0^+)$).

A simple contradiction argument yields that it is sufficient to show that for every infinitesimal sequence
$\{\eps_j\}_{j\in\N}$ there are a subsequence
$\{\eps_{j_k}\}_{k\in\N}$ and $(u_{\eps_{j_k}},v_{\eps_{j_k}})\to (u,1)$ in $L^1(\Omega,\R^2)$ for which
\[
\limsup_{k\to\infty}\mathcal{F}_{\eps_{j_k}}^\kappa(u_{\eps_{j_k}},v_{\eps_{j_k}})\leq F_{\FailureS}(u,1)\,.
\]
Fix, $\{\eps_j\}_{j\in\N}$ infinitesimal, and consider $I_1:=(a,x_0)$ and $I_2:=(x_0,b)$.
Being $u\in W^{1,1}(I_1\cup I_2)$, there are sequences $(u_{\eps_j}^{(i)},v_{\eps_j}^{(i)})\to (u,1)$ in $L^1(I_i,\R^2)$
such that $F''(u,I_i)=\lim_j\Functu_{\eps_j}^\kappa
(u_{\eps_j}^{(i)},v_{\eps_j}^{(i)},I_i)$.
We may then extract a subsequence such that
$(u_{\eps_{j_k}}^{(i)},v_{\eps_{j_k}}^{(i)})\to (u,1)$ $\calL^1$-a.e. on $I_i$ for $i\in\{1,2\}$, as well.
For the sake of notational simplicity in what follows we denote 
$(u_{\eps_{j_k}}^{(i)},v_{\eps_{j_k}}^{(i)})$ simply by
$(u_k^{(i)},v_k^{(i)})$. By a.e. convergence we can find points $x_1\in I_1\cap(x_0-2\lambda,x_0-\lambda)$ and
$x_2\in I_2\cap(x_0+\lambda,x_0+2\lambda)$, such that $(u_k^{(i)}(x_i),v_k^{(i)}(x_i))\to(u(x_i),1)$, with
$u(x_1)=u(x_0^-)$ and $u(x_2)=u(x_0^+)$.

By item (v) in Proposition~\ref{p:lepropdig} for every $\eta >0$ there exist
$T_{\eta}>0$ and $(\gamma_{\eta},\beta_{\eta})\in \mathfrak{U}_{|[u](x_0)|}(0,T_{\eta})$ such that
    \begin{equation}\label{e:gammaoverline sigma finito}
    \int_0^{T_{\eta}} \left(\varphi'(0^+)f^2(\beta_{\eta})|\gamma_{\eta}'|^2+ \frac{\PotDann(1-\beta_{\eta})}{4}+ |\beta_{\eta}'|^2
     \right)\dx\leq
      g(|[u](x_0)|)+\eta\,.
    \end{equation}
   Set $A^{\eta}_k:=\left (x_0-\frac{\eps_{j_k} T_\eta}{2}, x_0+\frac{\eps_{j_k} T_\eta}{2}\right)$, then $A^{\eta}_k\subseteq (x_1+\lambda,x_2-\lambda)$ for $k$ sufficiently big. Then define
   $\{(u^{\eta}_k,v^{\eta}_k)\}_{k\in\N}$ by
   \begin{equation*}
       u^{\eta}_k(x)=
    \begin{cases}
    u_k^{(1)} & x\in (a,x_1)\\
    u_k^{(1)}(x_1)+(u(x_0^-)-u_k^{(1)}(x_1))\lambda^{-1}(x-x_1)
    & x\in[x_1,x_1+\lambda] \\
    u(x_0^-) & x\in (x_1+\lambda,x_0-\frac{\eps_{j_k}}2 T_\eta]\\
    u(x_0^-)+\gamma_{\eta}\left(\frac{x-x_0}{\eps_{j_k}}+\frac{T_{\eta}}{2}\right) & x\in A^{\eta}_k \\
    u(x_0^+) & x\in [x_0+\frac{\eps_{j_k}}2 T_\eta,x_2-\lambda),\\
    u_k^{(2)}(x_2)+(u_k^{(2)}(x_2)-u(x_0^+))\lambda^{-1}(x-x_2)
    & x\in[x_2-\lambda,x_2] \\
    u_k^{(2)} & x\in (x_2,b)\,,
    \end{cases}
   \end{equation*}
   and
   \begin{equation*}
       v^{\eta}_k(x)=
    \begin{cases}
        v_k^{(1)} & x\in (a,x_1)\\
        \zeta_k^{(1)} & x\in[x_1,x_0-\frac{\eps_{j_k} T_\eta}{2}]\\
        \beta_{\eta}\left(\frac{x-x_0}{\eps_{j_k}}+\frac{T_{\eta}}{2}\right) & x\in A^{\eta}_k \\
        \zeta_k^{(2)} & x\in[x_0+\frac{\eps_{j_k} T_\eta}{2},x_2] \\
        v_k^{(2)} & x\in (x_2,b)\,,
    \end{cases}
   \end{equation*}
where $\zeta_k^{(1)}(x)=(v_k^{(1)}(x_1)+\frac1{\eps_{j_k}}(x-x_1))\wedge 1$ and $\zeta_k^{(2)}(x)=(v_k^{(2)}(x_2)+\frac1{\eps_{j_k}}(x_2-x))\wedge 1$. Therefore, if  $\zeta_k^{(i)}(x)<1$ then $x\in[x_1,x_1+\eps_{j_k}(1-v_k^{(1)}(x_1))]$ if $i=1$, and $x\in[x_2-\eps_{j_k}(1-v_{\eps_{j_k}}^{(2)}(x_2)),x_2]$ if $i=2$.
 Note that $(u^{\eta}_k,v^{\eta}_k)\in H^1(\Omega,\R\times [0,1])$ thanks to  the assumptions on $u$. Moreover, $(u^{\eta}_k,v^{\eta}_k)\to (u,1)$ in $L^1(\Omega,\R^2)$ as $\gamma_{\eta}$, $\beta_{\eta} \in L^{\infty}(0,T_{\eta})$.
 To estimate the energy of $\{(u^{\eta}_k,v^{\eta}_k)\}_{k\in\N}$ we start with the contribution on $I_1\cup I_2\setminus A^{\eta}_k$:
 \begin{align}\label{e:stima energia 0 sigma finito}
\limsup_{k\to\infty}\mathcal{F}_{\eps_{j_k}}^\kappa&(u^{\eta}_k,v^{\eta}_k,
(a,x_1)\cup(x_2,b))\notag\\&=
\limsup_{k\to\infty}\big(\mathcal{F}_{\eps_{j_k}}^\kappa(u^{(1)}_k,v^{(1)}_k,
(a,x_1))+
\mathcal{F}_{\eps_{j_k}}^\kappa(u^{(2)}_k,v^{(2)}_k,
(x_2,b)\big)\nonumber\\
&\leq F''(u,1,I_1)+F''(u,1,I_2)\leq \int_\Omega \hsigmabarra^{**}(|u'|)\dx\,,
 \end{align}
 by the previous step as the intervals are disjoint.
 Next, we use a change of variable to estimate the contribution on $A^{\eta}_k$ as follows
\begin{align*}
\mathcal{F}_{\eps_{j_k}}^\kappa&(u^{\eta}_k,v^{\eta}_k,A^{\eta}_k)
\\&
\leq\int_0^{T_{\eta}}\left(\frac1{\eps_{j_k}} (\varphi(\eps_{j_k}f^2(\beta_{\eta}))+\kappa_{\eps_{j_k}})
|\gamma_{\eta}'|^2+ \frac{\PotDann(1-\beta_{\eta})}{4}
+ |\beta_{\eta}'|^2\right)\dx\,.
\end{align*}
Being $\varphi$ (right) differentiable in $0$ and bounded, there is $C>0$ such that
$\varphi(t)\leq Ct$ for all $t\geq 0$. Hence by Lebesgue dominated convergence theorem, by $\kappa_{\eps_{j_k}}=o(\eps_{j_k})$ as $k\to \infty$, and by \eqref{e:gammaoverline sigma finito} we conclude that
\begin{equation}\label{e:stima energia 1 sigma finito}
\limsup_{k\to\infty}
\mathcal{F}_{\eps_{j_k}}^\kappa(u^{\eta}_k,v^{\eta}_k,A^{\eta}_k)
\leq g(|[u](x_0)|)+\eta\,.
\end{equation}
Finally, we are left with estimating the energy on the set
$\Omega^{\eta}_k:=((a,x_1)\cup(x_2,b))\setminus A^{\eta}_k$.
By construction both $u^{\eta}_k$ and $v^{\eta}_k$ are piecewise affine on such a set,
a direct computation then yields
 \begin{align*}
\mathcal{F}_{\eps_{j_k}}^\kappa&(u^{\eta}_k,v^{\eta}_k,\Omega^{\eta}_k)\leq
\frac1{\lambda}(\varphi(\infty)+\kappa_{\eps_{j_k}})\big(
(u(x_0^+)-u_k^{(1)}(x_1))^2+(u(x_0^+)-u_k^{(2)}(x_2))^2\big)
\nonumber\\
&+\frac{\kappa_{\eps_{j_k}}}{\eps_{j_k}}\int_0^{T_\eta}|\gamma'_\eta|^2\dx
+(2-v^{(1)}_k(x_1)-v^{(2)}_k(x_2))(\sup_{[0,1]}\PotDann+1)\,.
 \end{align*}
Therefore, recalling that $u$ is constant on $(x_0-2\lambda,x_0)$ and on $(x_0,x_0+2\lambda)$,
by the choice of $x_1$ and $x_2$, and as $\kappa_\eps=o(\eps)$ as $\eps\to0$ we conclude that
 \begin{equation}\label{e:stima energia 2 sigma finito}
\limsup_k\mathcal{F}_{\eps_{j_k}}^\kappa(u^{\eta}_k,v^{\eta}_k,\Omega^{\eta}_k)=0
\end{equation}
By collecting  \eqref{e:stima energia 0 sigma finito}- \eqref{e:stima energia 2 sigma finito} and letting $\eta\to0$, \eqref{e:limsuppone} follows at once.

To remove the assumption that $u$ is piecewise constant close to the jump point $x_0$, we define the sequence
\begin{equation}\label{e:u costante vicino salto}
u_j(x)=u(x)\chi_{\Omega\setminus[x_0-\frac1j,x_0+\frac1j]}
+u(x_0-\textstyle{\frac1j})\chi_{(x_0-\frac1j,x_0]}+u(x_0+\textstyle\frac1j)\chi_{(x_0,x_0+\frac1j)}\,.
\end{equation}
We have $u_j\to u$ in $L^1(\Omega)$ and $u_j'=u'\chi_{\Omega\setminus[x_0-\frac1j,x_0+\frac1j]}$ $\calL^1$-a.e. on $\Omega$, and thus we conclude using the density argument explained before the statement.

Finally, since our construction modifies recovery sequences of $F''$ on each sub-interval on which $u$ is in $W^{1,1}$ only in a neighbourhood of the endpoints, we can extend the validity of \eqref{e:limsuppone} to every $u\in SBV(\Omega)$ with $\calH^0(J_u)<\infty$ arguing locally.
\medskip

\noindent{\bf Step~5.} The inequality in \eqref{e:limsuppone} holds if $u\in SBV(\Omega)$.

\noindent We claim that each function $u\in SBV(\Omega)$ can be approximated with a sequence of functions $u_j\in SBV(\Omega)$ and $\mathcal{H}^0(J_{u_j})<\infty$ for every $j\in\N$, converging to $u$ in $L^1(\Omega)$ and with energies $\Functu_{\FailureS}(u_j)\to\Functu_{\FailureS}(u)$. Indeed, consider the decomposition $u=u^{(a)}+u^{(s)}$, where $u_a\in W^{1,1}(\Omega)$ and $u_s(x)=\sum_{y\in J_u\cap(a,x]}[u](y)$ is piecewise constant, and define
$u_j:=u^{(a)}+\sum_{y\in I_j\cap(a,x]}[u](y)$, where $I_j:=\{y\in J_u:\,|[u](y)|\geq\sfrac1j\}$.
\medskip

\noindent{\bf Step~6.} The inequality in \eqref{e:limsuppone} holds if $u\in GBV(\Omega)$.

\noindent Note that if \eqref{e:limsuppone} holds on $BV(\Omega)$, we may conclude it on $GBV(\Omega)$ by means of the sequence of truncations defined in \eqref{e:truncations}, together with the standard density argument.

To show inequality \eqref{e:limsuppone} on $BV(\Omega)$ we use a relaxation argument.
Indeed, thanks to Step~5 we may apply \cite[Theorem~3.1]{bou-bra-but} to get that
the relaxed functional with respect to the weak$^*$ $BV$ topology of $\widetilde{\Functu}_{\FailureS}:BV(\Omega)\to[0,\infty]$ defined
as $\widetilde{\Functu}_{\FailureS}=\Functu_{\FailureS}$ on $SBV(\Omega)$ and $\infty$ otherwise is given by
\begin{equation}\label{e:stima relax bou bra but}
\widetilde{\Functu}_{\FailureS}(u)=
\int_\Omega (\hsigmabarra^{**}\triangledown g^0)(|u'|)\dx
+\int_{J_u}((\hsigmabarra^{**})^\infty\triangledown g)(|[u]|)\mathrm{d}\mathcal{H}^0
+\int_\Omega (\hsigmabarra^{**}\triangledown g^0)^\infty\mathrm{d}D^cu\,,
\end{equation}
for every $u\in BV(\Omega)$, where
$\psi_1\triangledown\psi_2(t):=\inf\{\psi_1(x)+\psi_2(t-x):\,x\in[0,t]\}$ is the usual infimal convolution of two functions, and for every $s\geq0$
\[
g^0(s):=\limsup_{t\to0}\frac{g(st)}{t},\quad
(\hsigmabarra^{**})^\infty(s):=
\lim_{t\to\infty}\frac{\hsigmabarra^{**}(st)}{t}\,.
\]
In particular, $g^0(s)=(\hsigmabarra^{**})^\infty(s)
=\FailureS (\varphi'(0^+))^{\sfrac12} s$ for every $s\geq 0$
by Lemma~\ref{p:proprieta hsigma} and \eqref{e:strenght}, respectively.
We have $\psi_1\triangledown\psi_2\leq \psi_1\wedge\psi_2$ by the very definition. On the one hand, item (ii) in Proposition~\ref{p:lepropdig} implies $g\leq(\hsigmabarra^{**})^\infty$ so that $(\hsigmabarra^{**})^\infty\triangledown g\leq g$ and actually the equality holds by sub-additivity of $g$. On the other hand, \eqref{minorconlin} in Lemma~\ref{p:proprieta hsigma} implies $\hsigmabarra^{**}\leq g^0$ so that $\hsigmabarra^{**}\triangledown g^0\leq\hsigmabarra^{**}$ and actually the equality holds thanks to the convexity of $\hsigmabarra^{**}$, and the fact that if $\zeta$ belongs to the subdifferential of $\hsigmabarra^{**}$ in $t$, then $\zeta\leq g^0(1)$ by $g^0(s)=(\hsigmabarra^{**})^\infty(s)
=\FailureS (\varphi'(0^+))^{\sfrac12} s$.

The conclusion then follows at once from \eqref{e:stima relax bou bra but} by taking into account that $F''\leq \widetilde\Functu_{\FailureS}$ on $SBV(\Omega)\times\{1\}$ by Step~5, and that $F''$ is $L^1$ lower semicontinuous.
\end{proof}

\subsection{Case \texorpdfstring{$\FailureS=\infty$}{...}}

In this section we consider the case $\FailureS=\infty$.

\begin{theorem}[Compactness]\label{t:cptness sigma infinito}
Under the assumptions of Theorem~\ref{t:finale} with $\FailureS=\infty$,
let $\{(u_{\eps},v_{\eps})\}_{\eps>0} \in L^1(\Omega,\R^2)$ be such that
\begin{equation}\label{e:supremumnorme sigma infinito}
    \sup_{\eps >0} (\Functeps(u_{\eps},v_{\eps})+\|u_{\eps}\|_{L^1(\Omega)})<\infty\,,
\end{equation}
then there are 
$\{\eps_k\}_{k\in \N}$ and $u\in L^1\cap GSBV(\Omega)$ with $u'\in L^2(\Omega)$ such that $(u_{\eps_k},v_{\eps_k})\to (u,1)$ $\calL^1$-a.e. on $\Omega$.
If, moreover, $(u_\eps)_\eps$ is equi-integrable then $u_{\eps_k}\to u$ in $L^1(\Omega)$.
\end{theorem}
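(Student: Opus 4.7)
The plan is to mirror the proof of Theorem~\ref{t:cptness sigma finito} up to the construction of a limit $u\in L^1\cap GBV(\Omega)$ with $u_{\eps_k}\to u$ $\calL^1$-a.e. on $\Omega$, and then to refine the argument to obtain $|D^cu|(\Omega)=0$ and $u'\in L^2(\Omega)$. Concretely, from \eqref{e:supremumnorme sigma infinito} I first derive $v_\eps\to 1$ in measure on $\Omega$ and $(u_\eps,v_\eps)\in H^1(\Omega,\R\times[0,1])$; fixing $\gamma_0\in(0,1)$ and $\delta\in(0,\delta_{\gamma_0})$, I apply Lemma~\ref{l:suppenergy} in the regime $\FailureS=\infty$ to produce $\tilde u_{\eps,\delta}:=u_\eps\chi_{\{v_\eps>s_\delta\}}\in SBV(\Omega)$ with $H_\delta(\tilde u_{\eps,\delta},\Omega)\le\Functeps(u_\eps,v_\eps)$ and bulk density $\zeta_1(\delta)h_{\sfrac1{\gamma_0}-1}$. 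The linear lower bound on $h^{**}_{\sfrac1{\gamma_0}-1}$ from \eqref{minorconlin}, combined with $BV$-compactness of the truncations $\tilde u^M_{\eps,\delta}$ and a diagonal argument in $M\in\N$, extracts a subsequence $\{\eps_k\}$ and $u^M\in BV(\Omega)$ such that $\tilde u^M_{\eps_k,\delta}\to u^M$ in $L^1(\Omega)$; setting $u:=\sup_M u^M$ yields $u\in L^1\cap GBV(\Omega)$ and $u_{\eps_k}\to u$ $\calL^1$-a.e. on $\Omega$, exactly as in the finite-$\FailureS$ case.

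For the additional content I would exploit that the construction of $\tilde u_{\eps,\delta}$ in Lemma~\ref{l:suppenergy} depends only on $\delta$ through $s_\delta$: the same function therefore satisfies $H_\delta(\tilde u_{\eps,\delta},\Omega)\le\Functeps(u_\eps,v_\eps)$ with bulk density $\zeta_1(\delta)h_{\sfrac1\gamma-1}$ for every $\gamma\in(0,1)$ with $\delta<\delta_\gamma$. For the subsequence $\{\eps_k\}$ already extracted, the $L^\infty$-bound of the truncations together with the convergences $u_{\eps_k}\to u$ and $v_{\eps_k}\to 1$ in measure yield $\tilde u^M_{\eps_k,\delta}\to u^M$ in $L^1(\Omega)$ for each $\delta>0$ by dominated convergence. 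Since $h^{**}_{\sfrac1\gamma-1}$ is convex, vanishes at $0$ and has at most linear growth by \eqref{minorconlin}, Proposition~\ref{p:CFI_sci} applied along $\tilde u^M_{\eps_k,\delta}\to u^M$, combined with the elementary inequality $h_{\sfrac1\gamma-1}\ge h^{**}_{\sfrac1\gamma-1}$, gives
\[
\zeta_1(\delta)\int_\Omega h^{**}_{\sfrac1\gamma-1}(|(u^M)'|)\dx+\zeta_1(\delta)(h^{**}_{\sfrac1\gamma-1})^\infty|D^cu^M|(\Omega)\le C,
\]
where $C:=\sup_\eps\Functeps(u_\eps,v_\eps)<\infty$.

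Letting first $M\to\infty$ (via Fatou on the bulk term and $|D^cu^M|(\Omega)\uparrow|D^cu|(\Omega)$) and then $\delta\to 0^+$ with $\gamma$ fixed (so that $\zeta_1(\delta)\to 1$), I obtain
\[
\int_\Omega h^{**}_{\sfrac1\gamma-1}(|u'|)\dx+(h^{**}_{\sfrac1\gamma-1})^\infty|D^cu|(\Omega)\le C
\]
for every $\gamma\in(0,1)$. Finally, sending $\gamma\to 0^+$, Proposition~\ref{p:proprieta hsigma}(iii) yields the monotone convergence $h^{**}_{\sfrac1\gamma-1}(t)\uparrow\varphi(\infty)t^2$ together with $(h^{**}_{\sfrac1\gamma-1})^\infty=(\varphi'(0^+))^{\sfrac12}(\sfrac1\gamma-1)\to\infty$, so that Beppo Levi's theorem and the convention $0\cdot\infty=0$ force $|D^cu|(\Omega)=0$ and $\varphi(\infty)\int_\Omega|u'|^2\dx\le C$, establishing $u\in GSBV(\Omega)$ with $u'\in L^2(\Omega)$. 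The $L^1$-convergence under equi-integrability follows verbatim from Theorem~\ref{t:cptness sigma finito}. The main delicate point is orchestrating the two limits $\delta\to 0^+$ and $\gamma\to 0^+$ subject to the compatibility constraint $\delta<\delta_\gamma$; this is defused by the observation that the intermediate estimate depends only on the limit $u$ (and not on the approximating sequence), so the two parameters can be driven to zero in succession without having to re-extract subsequences.
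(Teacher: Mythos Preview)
Your proposal is correct and follows essentially the same approach as the paper: reduce to the finite-$\FailureS$ compactness argument to produce $u\in L^1\cap GBV(\Omega)$, then apply Proposition~\ref{p:CFI_sci} to the truncations with the bulk density $h^{**}_{\sfrac1\gamma-1}$ and let $\gamma\to0^+$ to kill the Cantor part and obtain the $L^2$ bound on $u'$. The only cosmetic difference is the order of limits: the paper sends $\delta\to0$ and $\gamma\to0$ at fixed $M$ (obtaining $|D^cu^M|(\Omega)=0$ for each $M$) and only then lets $M\to\infty$, whereas you take $M\to\infty$ first; both orderings work, and your explicit remark that the dominated-convergence argument gives $\tilde u^M_{\eps_k,\delta}\to u^M$ in $L^1$ for \emph{every} $\delta$ (not just the one used for extraction) is exactly the point the paper leaves implicit.
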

\begin{proof}
The proof follows the same strategy of Theorem~\ref{t:cptness sigma finito}.
We adopt the same notation used there and we highlight only the main changes.
The convergences of $(v_\eps)_\eps$ to $1$ in measure on $\Omega$, and of a subsequence $(u_{\eps_k})_k$ to some $u\in L^1\cap GBV(\Omega)$ follow from the same argument. The $\calL^1$-a.e. convergence on $\Omega$, and the $L^1(\Omega)$ convergence if $(u_\eps)_\eps$ is equi-integrable, are analogous.

We need only to show that $u\in GSBV(\Omega)$ and that
$u'\in L^2(\Omega)$. With this aim we note that
in case $\FailureS=\infty$, the estimate
$H_\delta(\tilde u_{\eps,\delta})\leq \Functeps(u_\eps,v_\eps)$ established in Lemma~\ref{l:suppenergy} holds with bulk density $h_{\sfrac1\gamma-1}$, for every $\gamma\in(0,1)$ (cf. \eqref{e:Haccagrand sigma infinito}).
Thus, in view of Proposition~\ref{p:CFI_sci} we conclude that for every $M>0$, $\gamma\in(0,1)$ and $\delta\in(0,\delta_\gamma)$
\begin{equation*}
\zeta_1(\delta)\int_\Omega h^{**}_{\sfrac1\gamma-1}(|(u^M)'|)\dx + \zeta_1(\delta)\left(\frac1\gamma-1\right) |D^cu^M|(\Omega) \leq C\,.
\end{equation*}
By letting first $\delta\to 0$ and then $\gamma\to0$ we conclude $|D^cu^M|(\Omega)=0$
for every $M\in\N$, that is equivalently
$u^M\in SBV(\Omega)$, from which we conclude that
$u\in GSBV(\Omega)$.
In addition, the latter estimate and Lemma~\ref{p:proprieta hsigma} imply
\[
\varphi(\infty)\int_\Omega |(u^M)'|^2\dx\leq C\,.
\]
The conclusion then follows at once by letting $M\to\infty$.
\end{proof}

The lower bound inequality in the case $\FailureS=\infty$
follows as the analogous result in Proposition~\ref{p:diffuseliminf sigmabarra finito}.
\begin{proposition}\label{p:diffuseliminf sigmabarra infinito}
Under the assumptions of Theorem~\ref{t:finale} with $\FailureS=\infty$,
for every $(u_{\eps},v_{\eps}),\,(u,v)\in L^1(\Omega,\R^2)$ with $(u_{\eps},v_{\eps})\to (u,v)$ in $L^1(\Omega,\R^2)$, we have
\begin{equation}\label{e:liminf diffuse sigmabarra infinito}
    \int_A h_\infty(|u'|)\dx \leq \liminf_{\eps \to 0} \Functeps(u_{\eps},v_{\eps},A)\,,
\end{equation}
for every $A\in\calA(\Omega)$.
\end{proposition}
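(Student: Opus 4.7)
The plan is to mimic the argument of Proposition~\ref{p:diffuseliminf sigmabarra finito}, replacing the bulk density $h_{\FailureS_\gamma}$ (appropriate when $\FailureS<\infty$) with $h_{\sfrac{1}{\gamma}-1}$ (which is the density provided by Lemma~\ref{l:suppenergy} when $\FailureS=\infty$, cf.~\eqref{e:Haccagrand sigma infinito}), and then sending $\gamma\to 0^+$ so that $\sfrac{1}{\gamma}-1\to\infty$ and the densities converge to $h_\infty(t)=\varphi(\infty)t^2$ via item~(iii) of Proposition~\ref{p:proprieta hsigma}.

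First I would reduce to the case in which $A\in\calA(\Omega)$ is an interval by superadditivity of the inferior limit, and assume the right-hand side of \eqref{e:liminf diffuse sigmabarra infinito} is finite (otherwise the claim is trivial). In particular, $(u_\eps,v_\eps)\in H^1(A,\R\times[0,1])$ for every $\eps>0$ small enough. By Theorem~\ref{t:cptness sigma infinito} applied on $A$ in place of $\Omega$, we have $v=1$ $\calL^1$-a.e.~on $A$ and $u\in GSBV(A)$ with $u'\in L^2(A)$; in particular $|D^cu^M|(A)=0$ for every $M\in\N$, where $u^M$ is the truncation defined in \eqref{e:truncations}.

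Next, fix $M\in\N$ and $\gamma\in(0,1)$, and apply Lemma~\ref{l:suppenergy} to the pair $(u_\eps^M,v_\eps)$: there is $\delta_\gamma>0$ such that for every $\delta\in(0,\delta_\gamma)$ we get $\tilde u^M_{\eps,\delta}:=(\widetilde{u^M_\eps})_\delta\in SBV(A)$ with
\[
H_\delta(\tilde u^M_{\eps,\delta},A)\le\Functeps(u^M_\eps,v_\eps,A)\le \Functeps(u_\eps,v_\eps,A),
\]
where $H_\delta$ has bulk density $h_{\sfrac{1}{\gamma}-1}$ (cf.~\eqref{e:Haccagrand sigma infinito}), and $\tilde u^M_{\eps,\delta}\to u^M$ in $L^1(A)$ as $\eps\to 0$. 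Since by Proposition~\ref{p:proprieta hsigma}(ii) the convex envelope $h^{**}_{\sfrac{1}{\gamma}-1}$ satisfies the linear growth bound in \eqref{minorconlin}, Proposition~\ref{p:CFI_sci} applies with $\phi=h^{**}_{\sfrac{1}{\gamma}-1}$ and yields, recalling that $u^M\in SBV(A)$,
\[
\zeta_1(\delta)\int_A h^{**}_{\sfrac{1}{\gamma}-1}(|(u^M)'|)\dx\le\liminf_{\eps\to 0}\Functeps(u_\eps,v_\eps,A).
\]

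Finally, letting $\delta\to 0^+$ uses \eqref{e:limittruncat} and \eqref{e:hsigmacvxapprox}; letting $\gamma\to 0^+$ so that $\sfrac{1}{\gamma}-1\to\infty$ invokes \eqref{e:hinftyapprox} (which requires $\varphi'(0^+)\in(0,\infty]$, granted by (Hp~$4$)) together with Fatou's lemma to produce $\int_A h_\infty(|(u^M)'|)\dx$ on the left-hand side; and finally letting $M\to\infty$ via Beppo Levi (as $(u^M)'\to u'$ $\calL^1$-a.e.~and $h_\infty\ge 0$ is monotone in $|\cdot|$) gives \eqref{e:liminf diffuse sigmabarra infinito}. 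The only mildly delicate point is the order of these three limits and the verification that the intermediate convex densities $h^{**}_{\sfrac{1}{\gamma}-1}$ enjoy both the linear growth bound needed for Proposition~\ref{p:CFI_sci} (uniform in $\eps$, with constants depending only on $\gamma$) and the pointwise convergence to $h_\infty$ as $\gamma\to 0^+$; both properties are handled by Proposition~\ref{p:proprieta hsigma}.
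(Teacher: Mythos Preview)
Your proposal is correct and follows essentially the same approach as the paper: truncation to $u^M$, application of Lemma~\ref{l:suppenergy} in the case $\FailureS=\infty$ (density $h_{\sfrac1\gamma-1}$), lower semicontinuity via Proposition~\ref{p:CFI_sci}, and then the successive limits $\delta\to0$, $\gamma\to0$, $M\to\infty$ using \eqref{e:limittruncat}, \eqref{e:hinftyapprox} and monotone convergence. The only cosmetic difference is that the paper first reduces to $u\in SBV(A)$ and defers the truncation step to the argument of Proposition~\ref{p:diffuseliminf sigmabarra finito}, while you carry the truncation explicitly throughout; also the paper invokes Beppo Levi (the family $h^{**}_{\sfrac1\gamma-1}$ is monotone in $\gamma$ by item~(i) of Proposition~\ref{p:proprieta hsigma}) where you use Fatou, which is equally valid.
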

\begin{proof}
First, let us assume the inferior limit on the right hand side of \eqref{e:liminf diffuse sigmabarra infinito} is finite, otherwise the claim is obvious.
Therefore, $(u_{\eps},v_{\eps})\in H^1(\Omega,\R\times[0,1])$ for $\eps>0$ sufficiently small, $u\in GSBV(\Omega)$ and $v=1$ $\calL^1$-a.e. by
Theorem~\ref{t:cptness sigma infinito}.
We show the result in case $u\in SBV(\Omega)$, the general case follows as in
Proposition~\ref{p:diffuseliminf sigmabarra finito}.

Fix $\gamma\in(0,1)$ and $A\in\calA(\Omega)$, using the notation and the results in  Lemma~\ref{l:suppenergy} we find $\delta_\gamma\in(0,1)$ such that for every
$\delta\in(0,\delta_\gamma)$ there is $\tilde{u}_{\eps,\delta}:=(\widetilde{u_\eps})_\delta\in SBV(A)$ (for every $\eps$ small enough) we have
\[
H_\delta(\tilde u_{\eps,\delta},A)\leq \Functeps(u_\eps,v_\eps,A)\,,
\]
with density $h_{\sfrac1\gamma}$ (cf. \eqref{e:Haccagrand sigma infinito}).
Thus, in view of Proposition~\ref{p:CFI_sci} we conclude that
\begin{equation}
    \zeta_1(\delta)\int_A h^{**}_{\sfrac1\gamma-1}(|u'|)\dx
    \leq \liminf_{\eps \to 0} \Functeps(u_{\eps},v_{\eps},A)\,,
\end{equation}
recalling that $|D^cu|(\Omega)=0$.
By letting first $\delta\to 0$ and then $\gamma\to0$ we conclude 
\eqref{e:liminf diffuse sigmabarra infinito} in view of
\eqref{e:limittruncat}, \eqref{e:hinftyapprox} in Lemma~\ref{p:proprieta hsigma}, and Beppo Levi's theorem.
\end{proof}
The proof of the lower bound inequality for the surface part takes advantage of Proposition~\ref{p:lowerboundBVsfc} and of \cite[Step~2 in Corollary~4.8]{ContiFocardiIurlanopgrowth}.
\begin{proposition}\label{p:lowerboundBVsfc sigmabarra infinito}
Under the assumptions of Theorem~\ref{t:finale} with $\FailureS=\infty$,
for every $(u_{\eps},v_{\eps}),\,(u,v)\in L^1(\Omega,\R^2)$ with $(u_{\eps},v_{\eps})\to (u,v)$ in $L^1(\Omega,\R^2)$, we have
\begin{equation}\label{lbjump sigmabarra infinito}
\int_{J_u{\cap A}}g(|[u]|)\dd\calH^{0}  \le
\liminf_{\eps\to0}\Functeps(u_\eps,v_\eps,A)
\end{equation}
for every $A\in\calA(\Omega)$, where $g$ is defined in \eqref{e:lagiyo}.
\end{proposition}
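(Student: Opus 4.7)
The strategy is to follow the proof of Proposition~\ref{p:lowerboundBVsfc} through the standard reductions and then reduce the super-linear case $\FailureS=\infty$ to the linear case $\FailureS<\infty$ already handled. After assuming $A\in\calA(\Omega)$ is an interval and the liminf finite, Theorem~\ref{t:cptness sigma infinito} yields $u\in GSBV(A)$ with $u'\in L^2(A)$ and $v=1$ $\calL^1$-a.e. on $A$. By the localization, finite-covering argument over $J_u\cap A$, and truncation from $GBV$ to $BV$ employed in Proposition~\ref{p:lowerboundBVsfc}, it suffices to establish the blow-up bound
\begin{equation*}
g(|[u](x_0)|)\leq \liminf_{r\to 0}\liminf_{\eps\to 0}\Functeps(u_\eps,v_\eps,I_r(x_0))
\end{equation*}
for every $x_0\in J_u\cap A$ when $u\in BV(A)$.

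To achieve this bound when $\FailureS=\infty$, I would follow the spirit of \cite[Step~2 in Corollary~4.8]{ContiFocardiIurlanopgrowth} via truncation of the damage potential. For every $N\in\N$, set $\PotDann_N:=\PotDann\wedge(N^2 Q)$. Then $\PotDann_N$ satisfies (Hp~2) with $\FailureS_N=N\in(0,\infty)$ (since $\PotDann/Q\to\infty$ near the origin), the function $f$ in \eqref{e:f} is unchanged (it depends only on $\hat f$ and $Q$), and the associated functional $\Functeps^N$ obtained from \eqref{functeps} by replacing $\PotDann$ with $\PotDann_N$ satisfies $\Functeps^N\leq \Functeps$ pointwise. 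Applying Proposition~\ref{p:lowerboundBVsfc} to $\Functeps^N$ yields, with $g_N$ the surface density associated to $\PotDann_N$,
\begin{equation*}
\int_{J_u\cap A} g_N(|[u]|)\dd\calH^0 \leq \liminf_{\eps\to0}\Functeps^N(u_\eps,v_\eps,A)\leq \liminf_{\eps\to0}\Functeps(u_\eps,v_\eps,A).
\end{equation*}
Since $\PotDann_N$ is non-decreasing in $N$ and converges pointwise to $\PotDann$, the same monotonicity is inherited by $g_N$ (which is also bounded above by $g$ via the monotonicity of the integrand defining \eqref{e:lagiyo}). The proof is then completed by monotone convergence on the left-hand side, provided one can establish the pointwise limit $g_N \uparrow g$.

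The main obstacle is precisely this convergence $g_N\uparrow g$. I would approach it via the representation in item (v) of Proposition~\ref{p:lepropdig infty}: the inequality $\limsup_N g_N(s) \leq g(s)$ follows at once from testing $g_N$ with an $\eta$-optimal triple $(T_\eta,\gamma_\eta,\beta_\eta)$ for $g$ and passing to the limit by dominated convergence of $\PotDann_N(1-\beta_\eta)$ to $\PotDann(1-\beta_\eta)$. The reverse inequality is more delicate: picking near-minimizers $(T_N,\gamma_N,\beta_N)$ for $g_N(s)$ and extracting a suitable subsequence requires handling possible degenerations $T_N\to\{0,\infty\}$ via reparametrization, and carefully controlling the singularity of $f$ as $\beta\to 1$. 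Once compactness is secured, the lower semicontinuity of the convex integrand $\varphi'(0^+)f^2(\beta)|\gamma'|^2+|\beta'|^2$, combined with the monotone convergence $\PotDann_N\to\PotDann$, delivers $\liminf_N g_N(s)\geq g(s)$ and concludes the argument.
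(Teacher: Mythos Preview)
Your reduction to the finite-$\FailureS$ case via $\PotDann_N:=\PotDann\wedge N^2Q$ is a legitimate and genuinely different route from the paper's. The paper does \emph{not} touch the potential; instead, after the blow-up it modifies the rescaled displacement: on the set $\{\eps_{k_j}f^2(\tilde v_j)>\delta\}=\{\tilde v_j>\delta_j\}$ it freezes $\tilde u_j$ to its boundary values on each component, takes the absolutely continuous part $w_j$ of the resulting $SBV$ function, and checks that $w_j\to u_0$ in $L^1$ (this uses the $L^2$-control on $\tilde u_j'$ coming from $\varphi(\eps_{k_j}f^2(\tilde v_j))\geq\varphi(\delta)>0$ there). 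On the complement one has $\eps_{k_j}f^2(\tilde v_j)\leq\delta$, hence $\frac{1}{\eps_{k_j}}\varphi(\eps_{k_j}f^2(\tilde v_j))\geq(\varphi'(0^+)-\gamma)f^2(\tilde v_j)$, and since $w_j'=0$ on $\{\tilde v_j>\delta_j\}$ the comparison $\hat{\callF}_{\eta_j}(w_j,\tilde v_j)\leq\calG_j(\tilde u_j,\tilde v_j)$ follows. The proof then closes exactly as in Proposition~\ref{p:lowerboundBVsfc} via $g_\eta$, and one lets $\gamma\to0^+$. No convergence of auxiliary surface densities is needed.

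Your proposal, by contrast, hinges entirely on $g_N\uparrow g$, and the sketch you give does not close this gap. The obstruction is concrete: using the representation in item~(v), a near-minimizer $(T_N,\gamma_N,\beta_N)$ for $g_N(s)$ gives the bound
\[
g(s)\leq \int_0^{T_N}\!\!\Big(\varphi'(0^+)f^2(\beta_N)|\gamma_N'|^2+\frac{\PotDann(1-\beta_N)}4+|\beta_N'|^2\Big)\dx
= \big(g_N(s)+o(1)\big)+\frac14\int_0^{T_N}(\PotDann-\PotDann_N)(1-\beta_N)\dx,
\]
so you must control the last integral. Although $\|\PotDann-\PotDann_N\|_\infty\to0$, you have no a~priori bound on $T_N$ from the energy. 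If instead you use the parametrization-invariant form on $(0,1)$, the discrepancy is governed by the ratio $\PotDann/\PotDann_N$, which equals $\PotDann/(N^2Q)$ on the (shrinking, but non-empty) set where $\PotDann>N^2Q$; precisely because $\FailureS=\infty$, this ratio is unbounded near $t=0$, and nothing in your argument prevents $\beta_N$ from sitting in that regime on a set of positive measure. The ``compactness of near-minimizers'' you invoke would require an a~priori lower bound on $1-\beta_N$ on the support of $\gamma_N'$, but the singular coefficient $f^2(\beta_N)$ does not by itself provide it (one only gets $\int f^2(\beta_N)|\gamma_N'|^2$ bounded, which allows $\beta_N\to1$ with $\gamma_N'\to0$ in a coordinated way). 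This step may ultimately be salvageable, but it is the heart of the matter and is not supplied; the paper's construction of $w_j$ is precisely the device that avoids it.
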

\begin{proof} The proof is similar to that of Proposition~\ref{p:diffuseliminf sigmabarra finito},
therefore we highlight only the necessary changes adopting the notation introduced there.
We proceed up to formula \eqref{e:conv vjtilde}, noticing that Theorem~\ref{t:cptness sigma infinito} implies that $u\in GSBV(\Omega)$ with $u'\in L^2(\Omega)$. Next, we change the argument as the truncation procedure in \eqref{e:bounddeltaciro}-\eqref{e:bounddeltaciro bis} does not work in this setting. As an outcome of the blow up procedure we are given a sequence
$(\tilde{u}_j,\tilde{v}_j)\to(u_0,1)$ in $L^1(I_1)$, where $I_1=(-\sfrac12,\sfrac12)$ and
$u_0(x)=u(x_0^-)\chi_{(-\frac12,0)} +u(x_0^+)\chi_{[0,\frac12)}$, such that
$\liminf_j\calG_j(\tilde{u}_j,\tilde{v}_j)<\infty$, $\calG_j$ defined in \eqref{e:Gj}.

Let $\gamma>0$ and $\delta>0$ be such that $\frac{\varphi(t)}{t}\geq \varphi'(0^+)-\gamma$
for all $t\in(0,\delta)$. Because of the continuity of $f$ on $[0,1)$, the assumption that $f$ is non-decreasing in a left neighbourhood of $t=1$, and $f(t)\to\infty$ as $t\to1^-$, then
$\{t\in[0,1):\,\eps_jf^2(t)>\delta\}=(\delta_j,1)$ for $j$ large, where $\delta_j\to 1^-$
as $j\to\infty$.
Hence, $\{t\in I_1:\,\eps_jf^2(\tilde{v}_j)>\delta\}
=\{t\in I_1:\,\tilde{v}_j>\delta_j\}=\cup_i(a_j^i,b_j^i)$, being $\tilde{v}_j\in H^1(I_1)$.
Consider the function
$\hat{u}_j:=\tilde{u}_j\chi_{\{\tilde{v}_j\leq\delta_j\}}
+\sum_i\tilde{u}_j(a_j^i)\chi_{(a_j^i,b_j^i)}$, then $\hat{u}_j\in SBV(I_1)$ with
$D\hat{u}_j=\tilde{u}_j'\chi_{\{\tilde{v}_j\leq\delta_j\}}\calL^1\res I_1
+\sum_i(\tilde{u}_j(b_j^i)-\tilde{u}_j(a_j^i))\delta_{t=b_j^i}$.
Take its absolutely continuous part $w_j$ in the standard decomposition
of $BV$ functions, namely
\[
w_j(x):=\tilde{u}_j(-\sfrac12)+\int_{-\sfrac12}^x \hat{u}_j'(t)\dt\,.
\]
We claim that $w_j\to u_0$ in $L^1(I_1)$ and that
\begin{align}\label{e:Gj comparison}
\hat{\mathcal{F}}_j(w_j,\tilde{v}_j)&:=\int_\Omega
\left(\eta_j(\varphi'(0^+)-\gamma)f^2(\tilde{v}_j)|w_j'|^2+\frac{\PotDann(1-v_j)}{4\eta_j}+\eta_j|v_j'|^2\right)\dx\notag\\
&\leq\calG_j(w_j,\tilde{v}_j)\leq \calG_j(\tilde{u}_j,\tilde{v}_j).
\end{align}
Indeed, first note that
\begin{align*}
\frac{\eta_j}{\eps_j}\varphi(\delta)
\int_{\{\tilde{v}_j>\delta_j\}}|\tilde{u}_j'|^2\dx\leq
\frac{\eta_j}{\eps_j}\int_{\{\tilde{v}_j>\delta_j\}}
\varphi(\eps_jf^2(\tilde{v}_j))|\tilde{u}_j'|^2\dx
\leq \calG_j(\tilde{u}_j,\tilde{v}_j)\,,
\end{align*}
and then that $w_j'=\hat{u}_j'\chi_{\{\tilde{v}_j\leq\delta_j\}}=\tilde{u}_j'
\chi_{\{\tilde{v}_j\leq\delta_j\}}$ $\calL^1$-a.e. on $I_1$.
Hence, $w_j\to u_0$ in $L^1(I_1)$ as
\[
\|w_j-\tilde{u}_j\|_{L^\infty(I_1)}\leq
\int_{\{\tilde{v}_j>\delta_j\}}|\tilde{u}_j'|\dx\leq
\left(\frac{\eps_j}{\varphi(\delta)\eta_j}\calG_j(\tilde{u}_j,\tilde{v}_j)
\right)^{\sfrac12}\,,
\]
and furthermore we have
\begin{align*}
\hat{\mathcal{F}}_j(w_j,\tilde{v}_j) \leq &\calG_j(w_j,\tilde{v}_j)
=\calG_j(\tilde{u}_j,\tilde{v}_j,\{\tilde{v}_j\leq\delta_j\})\\
&+\int_{\{\tilde{v}_j>\delta_j\}}\left(\frac{\PotDann(1-\tilde{v_j})}{\eps_j}+\eps_j|\tilde{v_j}'|^2\right)\dx
\leq \calG_j(\tilde{u}_j,\tilde{v}_j)\,,
\end{align*}
and thus \eqref{e:Gj comparison} follows.

The final argument is similar to that employed in Proposition~\ref{p:lowerboundBVsfc}
using Proposition~\ref{p:lepropdig infty} rather than Proposition~\ref{p:lepropdig},
and finally letting $\gamma\to0^+$.
\end{proof}
The lower bound inequality follows arguing analogously as in Proposition~\ref{p:lbcompBV}.
\begin{proposition}[Lower Bound inequality]\label{p:lbcompBV sigma infinito}
Under the assumptions of Theorem~\ref{t:finale} with $\FailureS=\infty$,
for every $(u_{\eps},v_{\eps}),\,(u,v)\in L^1(\Omega,\R^2)$ with $(u_{\eps},v_{\eps})\to (u,v)$ in $L^1(\Omega,\R^2)$ we have
\begin{equation}\label{e:lbBV sigma infinito}
F_\infty(u,v) \le \liminf_{\eps\to0} \Functeps(u_{\eps}, v_{\eps}),
\end{equation}
where $\Functeps$ and $F_\infty$ are defined in \eqref{functeps} and \eqref{F0},
respectively.
\end{proposition}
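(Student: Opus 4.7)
The plan is to mimic the measure-theoretic argument of Proposition~\ref{p:lbcompBV}, now invoking the diffuse and surface lower bounds of Propositions~\ref{p:diffuseliminf sigmabarra infinito} and \ref{p:lowerboundBVsfc sigmabarra infinito}. A key simplification with respect to the finite-$\FailureS$ case is that the limit $u$ automatically belongs to $GSBV(\Omega)$ (by Theorem~\ref{t:cptness sigma infinito}), so the Cantor part disappears from the decomposition, and no relaxation-type operation has to be performed on the bulk density since $h_\infty(t)=\varphi(\infty)t^2$ is already convex.

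We may assume the inferior limit in \eqref{e:lbBV sigma infinito} is finite. Then Theorem~\ref{t:cptness sigma infinito} yields $(u_\eps,v_\eps)\in H^1(\Omega,\R\times[0,1])$ for $\eps$ small, $v=1$ $\calL^1$-a.e.\ on $\Omega$, and $u\in L^1\cap GSBV(\Omega)$ with $u'\in L^2(\Omega)$. Using the truncations $u^M:=(u\vee(-M))\wedge M$ and the monotonicity $\Functeps(u_\eps^M,v_\eps,A)\leq\Functeps(u_\eps,v_\eps,A)$, it suffices to establish \eqref{e:lbBV sigma infinito} when $u\in SBV(\Omega)$ and then let $M\to\infty$.

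For such $u$, introduce the superadditive set function $\mu(A):=\liminf_{\eps\to 0}\Functeps(u_\eps,v_\eps,A)$ and the Radon measure $\nu:=\calL^1\res\Omega+\calH^0\res J_u$ (no Cantor term appears since $u\in SBV$). Write $\nu$ as the sum of two Radon measures concentrated on the disjoint Borel sets $U_1:=\Omega\setminus J_u$ and $U_2:=J_u$, and set
\[
\psi_1(x):=g(|[u](x)|)\,\chi_{U_2}(x),\qquad \psi_2(x):=h_\infty(|u'(x)|)\,\chi_{U_1}(x).
\]
Propositions~\ref{p:diffuseliminf sigmabarra infinito} and \ref{p:lowerboundBVsfc sigmabarra infinito} then give $\mu(A)\geq\int_A\psi_i\,\dd\nu$ for $i\in\{1,2\}$ and every $A\in\calA(\Omega)$, whence \cite[Proposition~1.16]{Braides1998} yields
\[
\mu(\Omega)\geq\int_\Omega(\psi_1\vee\psi_2)\,\dd\nu=F_\infty(u,1),
\]
upon recalling $h_\infty(t)=\varphi(\infty)t^2$ from Proposition~\ref{p:proprieta hsigma}~(iii).

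The only real technicality is the truncation step. Here one needs $F_\infty(u^M,1)\to F_\infty(u,1)$ as $M\to\infty$: for the bulk part, the identity $(u^M)'=u'\chi_{\{|u|\le M\}}$ $\calL^1$-a.e.\ together with $u'\in L^2(\Omega)$ gives monotone convergence of $\int h_\infty(|(u^M)'|)\dx$ to $\int h_\infty(|u'|)\dx$; for the surface part, $J_u=\bigcup_M J_{u^M}$ is increasing and $[u^M](x)=[u](x)$ eventually, so Beppo Levi's theorem applies directly to $\int g(|[u^M]|)\dd\calH^0$. This concludes the argument.
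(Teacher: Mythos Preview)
Your proof is correct and follows essentially the same route as the paper, which simply states that the lower bound ``follows arguing analogously as in Proposition~\ref{p:lbcompBV}''. You have correctly identified the simplifications available when $\FailureS=\infty$ (no Cantor part in $\nu$, and $h_\infty$ already convex), and your treatment of the truncation step is accurate.
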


To conclude we show the upper bound inequality in this setting for the perturbed family $\{\Functeps^\kappa\}_{\eps>0}$ defined in \eqref{e:F eps kappa}.
We recall the notation $F''={\displaystyle{\Gamma(L^1)\text{-}\limsup_{\eps\to0}\Functeps^\kappa}}$.
\begin{proposition}[Upper Bound inequality]\label{p:uppertilde sigma infinito}
Under the assumptions of Theorem~\ref{t:finale} with $\FailureS=\infty$,
let $(u,v)\in L^1(\Omega;\R^2)$ then
\begin{equation}\label{e:limsuppone sigma infinito}
  F''(u,v) \leq F_\infty(u,v)\,.
\end{equation}
\end{proposition}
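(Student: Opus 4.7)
The plan is to adapt the strategy of Proposition~\ref{p:uppertilde sigma finito} to the regime $\FailureS=\infty$, in which $\Functu_\infty$ simplifies considerably: finite energy forces $u\in GSBV(\Omega)$ with $u'\in L^2(\Omega)$ (Theorem~\ref{t:cptness sigma infinito}), the bulk density $h_\infty(t)=\varphi(\infty)t^2$ is already convex, and no Cantor contribution can arise. Since $F_\infty(u,v)=\infty$ unless $v=1$ $\calL^1$-a.e., only that case needs to be treated.

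For the affine case $u(x)=\xi x+\eta$ on an interval $I\subseteq\Omega$, I would take $u_\eps=u$ and $v_\eps=\lambda_\eps$ with $\lambda_\eps\to 1^-$ chosen so that $\PotDann(1-\lambda_\eps)=\eps^2$. Since $\FailureS=\infty$ forces $Q(t)/\PotDann(t)\to 0$ as $t\to 0^+$, we have $Q(1-\lambda_\eps)/\eps\to 0$, hence $\eps f^2(\lambda_\eps)=\eps\hatf(\lambda_\eps)/Q(1-\lambda_\eps)\to\infty$ and $\varphi(\eps f^2(\lambda_\eps))\to\varphi(\infty)$ by (Hp~3), while the dissipation $\PotDann(1-\lambda_\eps)/(4\eps)=\eps/4$ and the perturbation $\kappa_\eps|\xi|^2$ both vanish. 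This yields $F''(u,1,I)\leq\varphi(\infty)|\xi|^2\calL^1(I)=h_\infty(|\xi|)\calL^1(I)$. The partition-of-unity gluing of Step~2 of Proposition~\ref{p:uppertilde sigma finito} extends the bound to piecewise affine $u$ without modification (the dissipation control there relies only on $Q$ being non-decreasing near the origin), and the density of piecewise affine functions in $H^1(\Omega)$ combined with strong-$H^1$ continuity of $w\mapsto\varphi(\infty)\int_\Omega|w'|^2\dx$ then gives \eqref{e:limsuppone sigma infinito} on $H^1(\Omega)$.

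For $u\in SBV(\Omega)$ with $\calH^0(J_u)<\infty$, I would first reduce to $u$ piecewise constant near each jump via \eqref{e:u costante vicino salto} and then, at each jump $x_0$, apply item (v) of Proposition~\ref{p:lepropdig infty} to obtain $T_\eta>0$ and $(\gamma_\eta,\beta_\eta)\in\calulu_{|[u](x_0)|}(0,T_\eta)$ realising energy at most $g(|[u](x_0)|)+\eta$. Transplanting $(\gamma_\eta,\beta_\eta)$ onto the interval $(x_0-\eps_{j_k}T_\eta/2,x_0+\eps_{j_k}T_\eta/2)$ and interpolating to the $H^1$-recovery sequences on either side proceeds exactly as in Step~4 of Proposition~\ref{p:uppertilde sigma finito}; the bulk estimate \eqref{e:stima energia 1 sigma finito} carries over verbatim, since it relies only on the bound $\varphi(t)\leq Ct$ (valid by (Hp~4)) and Lebesgue dominated convergence.

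The main obstacle is the passage from $SBV$ with finitely many jumps to general $SBV(\Omega)$, since the superlinear behaviour of $g$ near $0$ (item (ii) of Proposition~\ref{p:lepropdig infty}) precludes the direct Bouchitt\'e--Braides--Buttazzo relaxation used in Step~6 of Proposition~\ref{p:uppertilde sigma finito}, whose hypotheses require linear growth at infinity of the bulk integrand. I would bypass this by direct approximation: for $u=u^{(a)}+u^{(s)}\in SBV(\Omega)$ with $\Functu_\infty(u)<\infty$, the set $I_j:=\{y\in J_u:|[u](y)|\geq 1/j\}$ is finite because $g$ is bounded below by a positive constant on $[1/j,\infty)$ while $\sum_{y\in J_u}g(|[u](y)|)<\infty$; setting $u_j:=u^{(a)}+\sum_{y\in I_j}[u](y)\chi_{(y,\infty)\cap\Omega}$ gives $u_j\to u$ in $L^1(\Omega)$ (using $\sum_y|[u](y)|<\infty$ from $u\in SBV\subseteq BV$), $\calH^0(J_{u_j})<\infty$, and $\Functu_\infty(u_j)\to\Functu_\infty(u)$ by monotone convergence on the jump sum. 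Combining this with the previous steps and the $L^1$ lower semicontinuity of $F''$ establishes \eqref{e:limsuppone sigma infinito} on $SBV(\Omega)$; the extension to $u\in GSBV(\Omega)$ finally follows by applying $L^1$ lower semicontinuity of $F''$ to the truncations $u^M$ of \eqref{e:truncations}, which satisfy $u^M\in SBV(\Omega)$, $u^M\to u$ in $L^1(\Omega)$, and $\Functu_\infty(u^M)\to\Functu_\infty(u)$ by monotone convergence on both the bulk and surface terms.
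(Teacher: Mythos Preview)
Your proof is correct, but the paper takes a noticeably shorter route. The observation it exploits is that when $\FailureS=\infty$ the bulk density $h_\infty(t)=\varphi(\infty)t^2$ is achieved \emph{exactly} by the trivial choice $v_\eps\equiv 1$: by the convention stated after \eqref{functeps}, $\varphi(\eps f^2(1))=\varphi(\infty)$, and there is no damage term since $\PotDann(0)=0$. Hence the paper dispenses entirely with your affine $\to$ piecewise affine $\to$ $H^1$ chain; for $u\in SBV(\Omega)$ with $u'\in L^2(\Omega)$, a single jump $x_0$, and $u$ piecewise constant nearby, it simply sets $v^\eta_\eps=1$ outside the transplanted interval $A^\eta_\eps$ and $u^\eta_\eps=u$ there, getting the bulk estimate $(\varphi(\infty)+\kappa_\eps)\int|u'|^2\dx$ in one line. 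No interpolation between recovery sequences and no subsequence extraction is needed.

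Your approach via $\lambda_\eps$ with $\PotDann(1-\lambda_\eps)=\eps^2$ also works, but note that the partition-of-unity gluing you invoke from Step~2 of Proposition~\ref{p:uppertilde sigma finito} is actually superfluous here: your $\lambda_\eps$ does not depend on the slope $\xi$, so the constant $v_\eps=\lambda_\eps$ already handles any piecewise affine (indeed any $H^1$) function directly. The reductions from general $SBV$ to finite jump set (your direct approximation) and from $GSBV$ to $SBV$ by truncation are carried out identically in both proofs, and your remark that no Bouchitt\'e--Braides--Buttazzo relaxation is needed is correct; the paper does not invoke it either.
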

\begin{proof}
Without loss of generality we assume $u\in GSBV(\Omega)$ with $u'\in L^2(\Omega)$, and $v=1$ $\calL^1$-a.e. on $\Omega$, the inequality being trivial otherwise.
Moreover, we can reduce to $u\in SBV(\Omega)$ with $u'\in L^2(\Omega)$ by the density argument explained before Proposition~\ref{p:uppertilde sigma finito} by using the sequence of truncations in \eqref{e:truncations}.
Furthermore, we may even assume that $\calH^0(J_u)<\infty$ by using the construction in Step~5 of Proposition~\ref{p:uppertilde sigma finito}.

First, consider $u\in SBV(\Omega)$ with $u'\in L^2(\Omega)$, $J_u=\{x_0\}$ and
$u=u(x^-_0)$ on $(x_0-\lambda,x_0)$ and $u=u(x^+_0)$ on $[x_0,x_0+\lambda)$
for some $\lambda>0$ with $(x_0-\lambda,x_0+\lambda)\subseteq \Omega$, where $u(x_0^-),u(x_0^+)$ are respectively the left and the right limit of $u$ in $x_0$ (without loss of generality we can suppose that $u(x_0^-)<u(x_0^+)$), and argue as in Step~4 of Proposition~\ref{p:uppertilde sigma finito}.
In particular, the assumption that $u$ is constant near the jump point $x_0$ is not restrictive up to a density argument and the construction in \eqref{e:u costante vicino salto}.

By item (v) in Proposition~\ref{p:lepropdig infty} for every $\eta >0$ there exist
$T_{\eta}>0$ and $(\gamma_{\eta},\beta_{\eta})\in \mathfrak{U}_{|[u](x_0)|}(0,T_{\eta})$ such that
    \begin{equation}\label{e:gammaoverline}
    \int_0^{T_{\eta}} \left(\varphi'(0^+)f^2(\beta_{\eta})|\gamma_{\eta}'|^2+ \frac{\PotDann(1-\beta_{\eta})}{4}+ |\beta_{\eta}'|^2
     \right)\dx\leq
      g(|[u](x_0)|)+\eta\,.
    \end{equation}
   Set $A^{\eta}_\eps:=\left (x_0-\frac{\eps T_\eta}{2}, x_0+\frac{\eps T_\eta}{2}\right)$, then $A^{\eta}_{\eps}\subseteq (x_0-\lambda,x_0+\lambda)$ for $\eps$ sufficiently small. Define $\{(u^{\eta}_\eps,v^{\eta}_\eps)\}_{\eps}$ by
   \begin{equation*}
       u^{\eta}_\eps(x)=
    \begin{cases}
        u(x_0^-)+\gamma_{\eta}\left(\frac{x-x_0}{\eps}+\frac{T_{\eta}}{2}\right) & x\in A^{\eta}_\eps \\
        u & x\in \Omega \setminus A^{\eta}_\eps
    \end{cases}
   \end{equation*}
   and
   \begin{equation*}
       v^{\eta}_\eps(x)=
    \begin{cases}
        \beta_{\eta}\left(\frac{x-x_0}{\eps}+\frac{T_{\eta}}{2}\right) &
        x\in A^{\eta}_\eps \\
        1 & x\in \Omega \setminus A^{\eta}_\eps\,.
    \end{cases}
   \end{equation*}
 Note that $(u^{\eta}_\eps,v^{\eta}_\eps)\in H^1(\Omega,\R\times [0,1])$ thanks to
 the assumptions on $u$. Moreover, $(u^{\eta}_\eps,v^{\eta}_\eps)\to (u,1)$ in $L^1(\Omega,\R^2)$ as $\gamma_{\eta}$, $\beta_{\eta} \in L^{\infty}(0,T_{\eta})$.

 Next, we estimate the energy of the family $\{(u^{\eta}_\eps,v^{\eta}_\eps)\}_{\eps}$.
 We start with the contribution on $\Omega \setminus \overline{A^{\eta}_\eps}$ by taking
 into account that $v^{\eta}_\eps=1$ and $u^{\eta}_\eps=u$ on such a set to get (recall that $\varphi(\eps f^2)$ is extended by continuity with value $\varphi(\infty)$ to $t=1$)
 \begin{equation}\label{e:stima energia 1}
\Functeps^\kappa(u^{\eta}_\eps,v^{\eta}_\eps,\Omega \setminus \overline{A^{\eta}_\eps})
\leq (\varphi(\infty)+\kappa_\eps)
\int_{\Omega \setminus \overline{A^{\eta}_\eps}}|u'|^2 \dx\,.
 \end{equation}
For the contribution on $A^{\eta}_\eps$ we change variable to get
\begin{align*}
\Functeps^\kappa(u^{\eta}_\eps,v^{\eta}_\eps,A^{\eta}_\eps)
\leq\int_0^{T_{\eta}}\left(\frac1{\eps} (\varphi(\eps f^2(\beta_{\eta}))+\kappa_\eps)|\gamma_{\eta}'|^2+ \frac{\PotDann(1-\beta_{\eta})}{4}
+ |\beta_{\eta}'|^2\right)\dx\,.
\end{align*}
Being $\varphi$ (right) differentiable in $0$ and bounded, there is $C>0$ such that
$\varphi(t)\leq Ct$ for all $t\geq 0$. Hence by Lebesgue dominated convergence theorem, $\kappa_\eps=o(\eps)$ as $\eps\to 0$, and by \eqref{e:gammaoverline} we conclude that
\begin{equation}\label{e:stima energia 2}
\limsup_{\eps\to0}
\Functeps^\kappa(u^{\eta}_\eps,v^{\eta}_\eps,A^{\eta}_\eps)
\leq g(|[u](x_0)|)+\eta\,,
\end{equation}
By collecting  \eqref{e:stima energia 1} and \eqref{e:stima energia 2} and letting $\eta\to0$, \eqref{e:limsuppone sigma infinito} follows at once.

Finally, arguing locally we can extend the validity of \eqref{e:limsuppone sigma infinito} to every $u\in SBV^2(\Omega)$ with $\calH^0(J_u)<\infty$ (cf. Step~4 of Proposition~\ref{p:uppertilde sigma finito}).
\end{proof}

\subsection{Dirichlet boundary values problem}\label{ss:Dirichlet}

In this section we impose Dirichlet boundary conditions on the approximating energies, determine the related $\Gamma$-limit and discuss the convergence of the related minimum problems.
With this aim define $\FunctepsDir:L^1(\Omega,\R^2)\times\calA(\Omega)\to[0,\infty]$ by
\begin{equation}\label{functepsDir}
 \FunctepsDir(u,v,A):= 
\Functeps^\kappa(u,v,A)
\end{equation}
if $(u,v)\in H^1(\Omega, \R\times[0,1])$ and
$u(a^+)=0$, $u(b^-)=L$, $v(a^+)=v(b^-)=1$,
and $\infty$ otherwise, where $\Omega=(a,b)$ and $\Functeps^\kappa$ is defined in \eqref{e:F eps kappa}.
Here, $\kappa_\eps=o(\eps)$ and it is strictly positive.
\begin{theorem}\label{t:Dirichlet}
Assume (Hp~$1$)-(Hp~$4$) hold with $\FailureS\in(0,\infty]$, and
let $\FunctepsDir$ be the functional defined in \eqref{functepsDir}.
Then, for all $(u,v)\in L^1(\Omega,\R^2)$
\begin{equation}\label{e:Gamma Dir}
\Gamma({L^1})\text{-}\lim_{\eps\to0}\FunctepsDir(u,v)=D_{\,\FailureS}(u,v)\,,
\end{equation}
where
\begin{equation}\label{D0}
D_{\,\FailureS}(u,v):=
\begin{cases}
\Functu_{\FailureS}(u)+ g(|u(a^+)|)+g(|u(b^-)-L|)
 & \textup{if $v=1$ $\mathcal{L}^1$-a.e. on $\Omega$} \cr
\infty & \textup{otherwise}\,.
\end{cases}
\end{equation}
Moreover, if $(u_\eps,v_\eps)\in\textup{argmin}_{L^1(\Omega,\R^2)}\FunctepsDir$, there are a subsequence (not relabeled) and a function $u\in GBV(\Omega)$ such that $(u_\eps,v_\eps)\to (u,1)$ in $L^1(\Omega,\R^2)$ and
\begin{equation}\label{e:conv min}
 \lim_{\eps\to0}  \FunctepsDir (u_\eps,v_\eps)=D_{\,\FailureS}(u,1)\,.
\end{equation}
\end{theorem}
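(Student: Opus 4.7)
The strategy is the standard extension trick: reduce the Dirichlet problem on $\Omega=(a,b)$ to the free one on the enlarged interval $\hat\Omega:=(a-1,b+1)$, so as to invoke Theorem~\ref{t:finale}. Given an admissible pair $(u,v)$ for $\FunctepsDir$, extend it by setting $\hat u\equiv 0$ and $\hat v\equiv 1$ on $(a-1,a)$, and $\hat u\equiv L$ and $\hat v\equiv 1$ on $(b,b+1)$. The Dirichlet conditions ensure $(\hat u,\hat v)\in H^1(\hat\Omega,\R\times[0,1])$, while $\hat u'=\hat v'=0$ and $\PotDann(1-\hat v)=\PotDann(0)=0$ on the extension sets, so $\Functeps^\kappa(\hat u,\hat v,\hat\Omega)=\FunctepsDir(u,v)$. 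Likewise, any $L^1$-limit $u$ extends to $\hat u\in GBV(\hat\Omega)$ whose only possible new jumps are at $a$ and $b$, of amplitudes $|u(a^+)|$ and $|u(b^-)-L|$, so that $\Functu_{\FailureS}(\hat u,\hat\Omega)=\Functu_{\FailureS}(u)+g(|u(a^+)|)+g(|u(b^-)-L|)=D_{\FailureS}(u,1)$.

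The lower bound in \eqref{e:Gamma Dir} then follows immediately: if $(u_\eps,v_\eps)\to(u,v)$ in $L^1(\Omega,\R^2)$ with $\liminf_\eps\FunctepsDir(u_\eps,v_\eps)<\infty$, the extensions converge in $L^1(\hat\Omega,\R^2)$ to $(\hat u,1)$, and Theorem~\ref{t:finale} applied on $\hat\Omega$ yields $D_{\FailureS}(u,1)=\Functu_{\FailureS}(\hat u,\hat\Omega)\leq\liminf_\eps\Functeps(\hat u_\eps,\hat v_\eps,\hat\Omega)\leq\liminf_\eps\FunctepsDir(u_\eps,v_\eps)$. For the upper bound, given $u$ with $D_{\FailureS}(u,1)<\infty$ we pick a recovery sequence $(\hat u_\eps,\hat v_\eps)$ for $(\hat u,1)$ on $\hat\Omega$ furnished by Propositions~\ref{p:uppertilde sigma finito} and \ref{p:uppertilde sigma infinito} and restrict to $\Omega$. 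The key observation is that the jumps of $\hat u$ at $a$ and $b$ are handled in those constructions by inserting optimal profiles of width $O(\eps)$, and one can arrange these to sit on the interior side of each boundary point (for instance centred at $a+\eps T_\eta/2$ and $b-\eps T_\eta/2$), so that $\hat u_\eps\equiv 0$ and $\hat v_\eps\equiv 1$ on $(a-1,a)$, and $\hat u_\eps\equiv L$ and $\hat v_\eps\equiv 1$ on $(b,b+1)$. By continuity of $\hat u_\eps\in H^1(\hat\Omega)$ across $a$ and $b$, the restriction $(u_\eps,v_\eps):=(\hat u_\eps,\hat v_\eps)|_\Omega$ satisfies the Dirichlet conditions, and its energy converges to $D_{\FailureS}(u,1)$; the perturbation $\kappa_\eps\int|u'|^2\,\dx$ contributes only $o(1)$, since $\kappa_\eps=o(\eps)$ while the profile gradients produce at most $O(1/\eps)$ in integral, exactly as in the proof of Proposition~\ref{p:uppertilde sigma finito}.

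For the convergence of minimizers, equi-coercivity is obtained via a simple truncation argument: $\FunctepsDir$ is non-increasing under truncation of its first argument between $\min\{0,L\}$ and $\max\{0,L\}$, and such a truncation preserves the boundary data. Hence each minimizer may be replaced by one satisfying $\|u_\eps\|_{L^\infty(\Omega)}\leq|L|$, whence $\|u_\eps\|_{L^1(\Omega)}\leq|L|(b-a)$. Comparison with the affine competitor $\ell(x):=L(x-a)/(b-a)$ paired with $v\equiv 1$ yields $\sup_\eps\FunctepsDir(u_\eps,v_\eps)\leq(\varphi(\infty)+\kappa_\eps)L^2/(b-a)$, so the hypotheses of Theorems~\ref{t:cptness sigma finito} and \ref{t:cptness sigma infinito} are met; extracting a subsequence gives $(u_\eps,v_\eps)\to(u,1)$ in $L^1(\Omega,\R^2)$ with $u\in GBV(\Omega)$. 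The fundamental theorem of $\Gamma$-convergence, combined with the $\Gamma$-convergence established in the first part of the statement and the uniform energy bound above, then forces $u$ to minimize $D_{\FailureS}$ and yields \eqref{e:conv min}.

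The main technical obstacle lies in the upper bound near the boundary: one has to check that the optimal jump profiles used in the $\Gamma$-$\limsup$ constructions of Propositions~\ref{p:uppertilde sigma finito} and \ref{p:uppertilde sigma infinito} can be placed to lie strictly inside $\Omega$, so that the recovery sequence coincides exactly, and not merely asymptotically, with the prescribed constant values on the extension sets. This amounts to a careful but essentially routine adaptation of Step~4 of those propositions, collapsing the external buffer interval $(x_1,x_0-\eps T_\eta/2)$ down to $\{a\}$ and analogously at $b$, and verifying that the $\kappa_\eps$-perturbation still yields an $o(1)$ contribution under this modified placement.
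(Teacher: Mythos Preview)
Your proposal is correct and follows the same overall extension strategy as the paper for the lower bound and for the convergence of minimizers. The only genuine difference is in how you handle the upper bound near the boundary.

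You propose to take a recovery sequence for $\hat u$ on $\hat\Omega$ and shift the optimal jump profiles at $a$ and $b$ so that they sit entirely on the interior side, then verify that the restriction to $\Omega$ satisfies the Dirichlet data. This works, but it forces you to reopen the machinery of Step~4 in Propositions~\ref{p:uppertilde sigma finito} and \ref{p:uppertilde sigma infinito}: in particular you must first reduce (via the density construction in \eqref{e:u costante vicino salto}, now applied at $a$ and $b$) to targets that are constant on a one-sided interior neighbourhood of each boundary point, so that the shifted profile can be glued to the interior recovery sequence without an extra transition layer. The paper sidesteps this entirely by a scaling trick: it replaces $U$ by $u_\lambda(x):=U\big(\tfrac{a+b}{2}+\lambda(x-\tfrac{a+b}{2})\big)$ for $\lambda<1$, which pulls a portion of the constant extension into $\Omega$ and turns the boundary jumps into honest interior jumps, so the standard upper bound propositions apply verbatim; one then lets $\lambda\to1^-$ and uses continuity of $F_{\FailureS}$ along this family together with lower semicontinuity of $F''$. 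Your route is more explicit about what the recovery sequence looks like near $\partial\Omega$; the paper's route is shorter and avoids any re-engineering of the profile construction.
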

\begin{proof}
We first show how to deduce \eqref{e:Gamma Dir} thanks to the results in
Theorem~\ref{t:finale}. With this aim, for every $(u,v)\in H^1(\Omega,\R^2)$
denote by $(U,V)$ the extensions given by
\[
U(x)=\begin{cases}
u(x) & \textup{if $x\in\Omega$}\cr
0
& \textup{if $x\in(a-1,a]$}\cr
L
& \textup{if $x\in(b,b+1]$}
\end{cases}
\qquad
V(x)=\begin{cases}
v(x) & \textup{if $x\in\Omega$}\cr
1
& \textup{if $x\in(a-1,a]$}\cr
1
& \textup{if $x\in(b,b+1]$}
\end{cases}
\]
Note that 
for every  $(u,v)\in L^1(\Omega,\R^2)$ and $\eta\in(0,1)$ we have

\[
\FunctepsDir(u,v)=\Functeps^{\kappa}(U,V,(a-\eta,b+\eta))\,.
\]
Therefore, given $(u_\eps,v_\eps)\to(u,v)$ in $L^1(\Omega,\R^2)$
then $(U_\eps,V_\eps)\to(U,V)$ in $L^1((a-\eta,b+\eta),\R^2)$ and
either by Proposition~\ref{p:lbcompBV}  if $\FailureS\in(0,\infty)$ or
by Proposition~\ref{p:lbcompBV sigma infinito} if $\FailureS=\infty$,
we deduce that
\begin{align*}
\liminf_{\eps\to0}&\FunctepsDir(u_\eps,v_\eps)=
\liminf_{\eps\to0}\Functeps^{\kappa}(U_\eps,V_\eps,(a-\eta,b+\eta))\\
&\geq F_{\,\FailureS}(U,V,(a-\eta,b+\eta))=D_{\,\FailureS}(u,v)\,.
\end{align*}
It is sufficient to show the upper bound inequality for
$u\in GBV(\Omega)$ and $v=1$ $\calL^1$-a.e. on $\Omega$.
In this case, by using the standard density argument we can reduce
to functions which are constant on a neighborhood of the boundary
of $\Omega$ by considering $\lambda\in(0,1)$ and $u_\lambda(x):=U(\frac{a+b}{2}+\lambda(x-\frac{a+b}{2}))$,
for $x\in(a-1,b+1)$, and $v_\lambda=1$  $\calL^1$-a.e. on $\Omega$.
Indeed, we have 
\[
\lim_{\lambda\to 1^-}F_{\,\FailureS}(u_\lambda,v_\lambda)=
D_{\,\FailureS}(u,1)\,.
\]
For this class of functions the upper bound inequality follows from
Propositions~\ref{p:uppertilde sigma finito}  if $\FailureS\in(0,\infty)$ or
from Proposition~\ref{p:diffuseliminf sigmabarra infinito} if $\FailureS=\infty$.

Finally, for every $\eps>0$ the functional $\FunctepsDir$ is coercive. Denote by $(u_\eps,v_\eps)$
a minimizing sequence. By truncation we can assume that $u_\eps\in[0,L]$ $\calL^1$-a.e. on $\Omega$.
Therefore, Theorem~\ref{t:cptness sigma finito} if $\FailureS\in(0,\infty)$
or Theorem~\ref{t:cptness sigma infinito} if $\FailureS=\infty$
provide the $L^1(\Omega,\R^2)$ convergence of  $(u_\eps,v_\eps)$ up to a subsequence not relabeled. The fundamental theorem of $\Gamma$-convergence yields \eqref{e:conv min}.
\end{proof}

\subsection{Corollaries of Theorem~\ref{t:finale}}\label{ss:other regimes}

In this section we collect several consequences of Theorem~\ref{t:finale}
essentially by using only simple comparison arguments. First, we establish Theorem~\ref{t:finale brittle}.
\begin{proof}[Proof of  Theorem~\ref{t:finale brittle}.]
We give the proof in case $\FailureS\in(0,\infty)$, the case $\FailureS=\infty$ being similar
and even simpler.
By assumption $\sfrac{\gamma_\eps}{\eps}\to+\infty$, therefore for every $j\in\N$ and
for $\eps$ sufficiently small depending on $j$ we deduce the pointwise estimate $\Functepstilde\geq\Functeps^{(j)}$, where the latter functionals  are defined as
$\Functeps$ in \eqref{functeps} with $\varphi(\gamma_\eps f^2)$ substituted by
$\varphi(j\eps f^2)$. Thus, we
deduce that $\displaystyle{\Gamma\hbox{-}\liminf_{\eps\to 0}\Functepstilde\geq
\Gamma\hbox{-}\lim_{\eps\to 0}\Functeps^{(j)}}=:F_{\FailureS}^{(j)}$ for every $j\in\N$.
In particular, if $\displaystyle{\Gamma\hbox{-}\liminf_{\eps\to 0}\Functepstilde}(u,v)$
is finite, then $u\in GBV(\Omega)$ and $v=1$ $\calL^1$-a.e. $x\in\Omega$. Furthermore,
Theorem~\ref{t:finale} yields that the energy densities of $F_{\FailureS}^{(j)}$ are given
by the convex envelope of $h_{\FailureS}^{(j)}(t)={\displaystyle{\inf_{\tau\in[0,\infty)}}}
\{\varphi(\frac 1\tau)t^2 +\frac{\varsigma^2}{4}j\tau\}$ for every $t\geq0$ for the bulk term
(cf. \eqref{e:hsigma} and use a reparametrization), $g^{(j)}(s)=g_1\big(j^{\sfrac12}s\big)$
for every $s\geq0$ for the jump term (cf. \eqref{e:lagiyo} and use \eqref{e:dependence g varphi}),
and $(j\varphi'(0^+))^{\sfrac12}\FailureS t$ for every $t\geq0$ for the Cantor term. From the latter, by letting $j\to\infty$ we immediately deduce that $u\in GSBV(\Omega)$.
Moreover, from the equality $g^{(j)}(s)=g_1\big(j^{\sfrac12}s\big)$ (cf. \eqref{e:dependence g varphi}) it is clear that
$g^{(j)}(s)\to 2\Psi(1)\chi_{(0,\infty)}(s)$ as $j\to\infty$ thanks to \eqref{e:Lafigrandeinfi} in Proposition~\ref{p:lepropdig}. Therefore, $u\in SBV(\Omega)$.
Finally, let $t>0$ and  $\tau_j\in[0,\infty)$ be such that $\varphi(\frac1{\tau_j})t^2 +\frac{\varsigma^2}{4}j\tau_j\leq h_{\FailureS}^{(j)}(t)+\sfrac1j$. Then, as $h_{\FailureS}^{(j)}(t)\leq\varphi(\infty)t^2$ for all $j$, $(\tau_j)_j$ is infinitesimal as $j\to\infty$, so that $h_{\FailureS}^{(j)}(t)\to\varphi(\infty)t^2$ as $j\to\infty$. Thus, we have shown that $\displaystyle{\Gamma\hbox{-}\liminf_{\eps\to 0}\Functepstilde}\geq \widetilde{F}$.

The upper bound inequality easily follows from the estimate
$\varphi(\gamma_\eps f^2(t))\leq\chi_{(0,\infty)}(t)$ for every $t>0$, and the construction in Proposition~\ref{p:uppertilde sigma infinito}.
\end{proof}
Note that if $\gamma_\eps=1$ for every $\eps>0$ and $Q$ is strictly increasing,
we recover exactly the Ambrosio and Tortorelli model with any assigned continuous
degradation function $\psi$ choosing $\varphi$ appropriately.

Now we turn to address the case $\FailureS=0$.
\begin{proposition}\label{p:gamma limite sigma 0}
Assume (Hp~$1$)-(Hp~$4$), and \eqref{e:FailureS def} holds with $\FailureS=0$. Then
\begin{equation}\label{e:gamma limite sigmite finito}
\Gamma(L^1)\text{-}\lim_{\eps\to0} \Functeps(u,v)=0
\end{equation}
if $u\in L^1(\Omega)$ and $v=1$ $\calL^1$-a.e. on $\Omega$, and
$\infty$ otherwise on $L^1(\Omega,\R^2)$.
\end{proposition}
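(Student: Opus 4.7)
The plan is to split the argument into the usual two-inequality structure of a $\Gamma$-convergence statement, where the novelty is that the candidate limit is identically zero on its effective domain. The lower bound is essentially a consequence of the phase-field compactness already exploited in Theorems~\ref{t:cptness sigma finito} and \ref{t:cptness sigma infinito}: if $(u_\eps,v_\eps)\to(u,v)$ in $L^1(\Omega,\R^2)$ along a subsequence of bounded energy, then $\int_\Omega\PotDann(1-v_\eps)\dx\le C\eps\to0$, which together with continuity of $\PotDann$ and $\PotDann^{-1}(\{0\})=\{0\}$ forces $v_\eps\to 1$ in measure, and hence $v=1$ $\calL^1$-a.e.\ on $\Omega$. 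Since $\Functeps\ge 0$, this immediately gives $\Gamma\text{-}\liminf_{\eps\to 0}\Functeps(u,v)\ge 0$ when $v=1$ a.e., and $=\infty$ otherwise.

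For the upper bound my idea is to exploit the assumption $\FailureS=0$, i.e.\ $\PotDann(t)/Q(t)\to 0$ as $t\to 0^+$, to find a single spatially constant phase-field level that makes all three terms of $\Functeps$ vanish simultaneously. Given $\eta>0$ small, I fix $t_\eta>0$ with $\PotDann(t)\le \eta^2 Q(t)$ on $(0,t_\eta)$; then by continuity of $Q$ and $Q(0)=0$, for each sufficiently small $\eps$ I pick $\delta_{\eps,\eta}\in(0,t_\eta)$ satisfying $Q(\delta_{\eps,\eta})\in(\eta^{-1}\eps,\,2\eta^{-1}\eps)$. Setting $v_{\eps,\eta}\equiv 1-\delta_{\eps,\eta}$ and $u_\eps=u$ for a fixed $u\in C_c^\infty(\Omega)$, the gradient term $\eps|v'_{\eps,\eta}|^2$ vanishes identically, the damage potential contributes at most $\eta|\Omega|/2$ (by the first bound on $\delta_{\eps,\eta}$), and the elastic term is estimated by $\varphi\bigl(\eta\,\hatf(1-\delta_{\eps,\eta})\bigr)\|u'\|_{L^2(\Omega)}^2$, which tends to $\varphi(\eta)\|u'\|_{L^2(\Omega)}^2$ as $\eps\to 0$ since $\delta_{\eps,\eta}\to 0$ (by $Q(\delta_{\eps,\eta})\to 0$) and $\hatf(1)=1$. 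Letting $\eta\to 0$ and extracting diagonally, I obtain a sequence $(u_\eps,v_\eps)\to (u,1)$ in $L^1(\Omega,\R^2)$ with $\Functeps(u_\eps,v_\eps)\to 0$; that is, $\Gamma\text{-}\limsup_{\eps\to 0}\Functeps(u,1)=0$ for every $u\in C_c^\infty(\Omega)$.

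To extend this to arbitrary $u\in L^1(\Omega)$, I would invoke the $L^1$-lower semicontinuity of the $\Gamma$-limsup together with the density of $C_c^\infty(\Omega)$ in $L^1(\Omega)$: if $u^{(j)}\to u$ in $L^1$ with $u^{(j)}\in C_c^\infty(\Omega)$, then $\Gamma\text{-}\limsup_{\eps\to 0}\Functeps(u,1)\le\liminf_j\Gamma\text{-}\limsup_{\eps\to 0}\Functeps(u^{(j)},1)=0$, closing the proof. The main (though mild) obstacle in the whole argument is the simultaneous calibration of $\delta_\eps$ so that both $\PotDann(\delta_\eps)=o(\eps)$ and $\eps=o(Q(\delta_\eps))$ hold, which is exactly what the condition $\FailureS=0$ enables through the intermediate window provided by $\eta$; the precise diagonal extraction only requires the continuity of $Q$ and is otherwise entirely routine. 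The role of $\varphi$ is minor here, only $\varphi(0)=0$ and continuity at the origin being used.
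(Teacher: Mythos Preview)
Your proof is correct and takes a genuinely different route from the paper's. The paper argues indirectly: it dominates $\Functeps$ from above by an auxiliary functional $\Functeps^{(j)}$ obtained by replacing $Q(1-t)$ with $(j^2\PotDann(1-t))\wedge Q(1-t)$ in the definition of $f$, so that the associated limit parameter becomes $\FailureS=1/j$; then it invokes the upper bound already established in Proposition~\ref{p:uppertilde sigma finito} for $\Functeps^{(j)}$ on $BV(\Omega)$ and lets $j\to\infty$, using $h^{**}_{1/j}\to 0$ and $g^{(1/j)}\to 0$. Your approach is instead a direct explicit construction: a spatially constant phase-field level $1-\delta_{\eps,\eta}$ calibrated so that simultaneously $\PotDann(\delta_{\eps,\eta})/\eps$ is small (via $\PotDann\le\eta^2 Q$) and $\eps/Q(\delta_{\eps,\eta})$ is small (via the choice $Q(\delta_{\eps,\eta})\sim\eta^{-1}\eps$), which kills all three terms at once for smooth $u$, followed by $L^1$ density. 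Your argument is more elementary and self-contained---it uses only $\varphi(0)=0$, continuity of $Q$, and $Q^{-1}(\{0\})=\{0\}$, without appealing to the main $\Gamma$-convergence theorem---whereas the paper's comparison technique is consistent with the perturbation strategy used throughout Section~\ref{ss:other regimes} (cf.\ Corollary~\ref{e:varphi variations}) and reuses existing machinery rather than redoing a construction.
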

\begin{proof}
By a standard density argument and the $L^1(\Omega)$ lower semicontinuity of the $\Gamma$-limsup, to prove the result it is sufficient to establish the upper bound inequality for $u\in BV(\Omega)$ and $v=1$ $\calL^1$-a.e. on $\Omega$.

With this aim, fix $j\in\N$ and define for every $t\in[0,1)$
\[
f^{(j)}(t):=\frac{\hatf(t)}{(j^2\PotDann(1-t))\wedge Q(1-t)}\,.
\]
Notice that, $f^{(j)}(t)\geq f(t)$ for every $t\in[0,1)$ and $j\in\N$; and moreover as $\FailureS=0$ there is $t_j$ such that
$(j^2\PotDann(1-t))\wedge Q(1-t)=j^2\PotDann(1-t)$ for all $t\in[t_j,1)$, 
and $f^{(j)}(t)=f(t)$ for all $t\in[0,t_j]$. In particular, if $\Functeps^{(j)}$
denotes the functional defined as $\Functeps$
with $f^{(j)}$ in place of $f$, we have that
$\Functeps(u,v)\leq\Functeps^{(j)}(u,v)$,
for every $(u,v)\in L^1(\Omega,\R^2)$, and $j\in\N$.
Thus, by Proposition~\ref{p:uppertilde sigma finito} we get that
\[
 \Gamma(L^1)\text{-}\limsup_{\eps\to0} \Functeps(u,1)\leq
\int_\Omega h^{**}_{\sfrac1j}(|u'|)\dx+\frac1j|D^c u|(\Omega)+
\int_\Omega g^{(\sfrac1j)}(|[u]|)\dd\calH^0,
\]
where $g^{(\sfrac1j)}$ is defined as $g$ in \eqref{e:lagiyo} with
$f^{(j)}$ in place of $f$.

Finally, we conclude by dominated convergence by taking
into account that $(h^{**}_{\sfrac1j})_{j\in\N}$ and
$(g^{(\sfrac1j)})_{j\in\N}$ are decreasing in $j$, with
$h^{**}_{\sfrac1j}\to h_0=0$ by \eqref{e:h0approx} in Lemma~\ref{p:proprieta hsigma},
and $g^{(\sfrac1j)}(s)\to 0$ for every $s\in[0,\infty)$ as
$g^{(\sfrac1j)}(s)\leq\sfrac sj\wedge2\Psi(1)$ by item (ii) in Proposition~\ref{p:lepropdig}.
\end{proof}
Finally, we discuss the role of the assumptions $\{\varphi(\infty),\varphi'(0^+)\}\in(0,\infty)$.
Clearly, $\varphi(\infty)>0$ not to have a trivial limit, the other alternatives are dealt with in the next result. Recall that
with $\varphi'(0^+)=\infty$ we mean that the limit of the difference quotient of $\varphi$ in $t=0$ exists and it is not finite, and that
in such a case the corresponding function $h_{\FailureS}^{\ast\ast}$ is superlinear
at infinity (cf. \eqref{e:laaccabroo}). Instead, if $\varphi(\infty)=\infty$, $h_{\FailureS}^{\ast\ast}$ is subquadratic
in the origin (cf. \eqref{e:laaccabrodo})
\begin{corollary}\label{e:varphi variations}
Assume (Hp~$1$) and (Hp~$2$).
\begin{itemize}
\item[(a)] If (Hp~$4$) holds, and $\varphi$ is continuous, non-decreasing with $\varphi(\infty)=\infty$, then
${\displaystyle{\Gamma(L^1)\text{-}\lim_{\eps\to0} \Functeps}}=F_{\FailureS}$
(cf. \eqref{F0}) if $\FailureS\in(0,\infty)$. Instead, if $\FailureS=\infty$
\begin{equation}\label{e:gamma limite varphi = infty sigma finito}
     \Gamma(L^1)\text{-}\lim_{\eps\to0} \Functeps(u,v)=\int_{J_u}g(|[u]|)\dd\calH^0
\end{equation}
if $u\in GSBV(\Omega)$ with $u'=0$ $\calL^1$-a.e. on $\Omega$ and $v=1$ $\calL^1$-a.e. on $\Omega$, $\infty$ otherwise on $L^1$, and $g$ is defined in \eqref{e:lagiyo};

\item[(b)] if (Hp~$3$) holds, $\varphi^{-1}(0)=0$, $\varphi'(0^+)=0$, then for every $\FailureS\in(0,\infty]$
\begin{equation}\label{e:gamma limite varphiprimo = 0}
     \Gamma(L^1)\text{-}\lim_{\eps\to0} \Functeps(u,v)=0
\end{equation}
if $u\in L^1(\Omega)$ and $v=1$ $\calL^1$-a.e. on $\Omega$, and
$\infty$ otherwise on $L^1(\Omega;\R^2)$;

\item[(c)] if (Hp~$3$) holds, $\varphi^{-1}(0)=0$, 
$\varphi'(0^+)=\infty$, then for every $\FailureS\in(0,\infty]$
\begin{equation}\label{e:gamma limite varphiprimo = infty}
     \Gamma(L^1)\text{-}\lim_{\eps\to0} \Functeps(u,v)=
     \int_\Omega h_{\FailureS}^{**}(|u'|)\dx+2\Psi(1)\calH^0(J_u)
\end{equation}
if $u\in SBV(\Omega)$ and $v=1$ $\calL^1$-a.e. on $\Omega$, and
$\infty$ otherwise on $L^1(\Omega,\R^2)$;

\item[(d)] if $\varphi$ is continuous, non-decreasing, with $\varphi^{-1}(0)=0$, and $\varphi(\infty)=\varphi'(0^+)=\infty$, then the equality in \eqref{e:gamma limite varphiprimo = infty}
holds for every $\FailureS\in(0,\infty)$, while for $\FailureS=\infty$
\begin{equation}\label{e:gamma limite varphiprimo e varphi= infty}
     \Gamma(L^1)\text{-}\lim_{\eps\to0} \Functeps(u,v)=2\Psi(1)\calH^0(J_u)
\end{equation}
if $u\in GSBV(\Omega)$ with $u'=0$ $\calL^1$-a.e. on $\Omega$ and $v=1$ $\calL^1$-a.e. on $\Omega$, $\infty$ otherwise on $L^1(\Omega;\R^2)$.
\end{itemize}
\end{corollary}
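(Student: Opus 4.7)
The common strategy is to reduce each case to Theorem~\ref{t:finale} via comparison with a family of auxiliary functionals $\Functeps^{(j)}$ obtained by replacing $\varphi$ with approximations $\varphi_j$ satisfying the full set of hypotheses (Hp~3)--(Hp~4), so that Theorem~\ref{t:finale} applies and yields $\Gamma$-limits $F_{\FailureS}^{(j)}$. The modifications are arranged so that $\varphi_j\leq\varphi$ pointwise (giving $\Functeps^{(j)}\leq\Functeps$, hence $\Gamma\text{-}\liminf_{\eps\to0}\Functeps\geq F_{\FailureS}^{(j)}$) when a lower bound on the $\Gamma$-limit is sought, and $\varphi_j\geq\varphi$ (giving the reverse $\Gamma\text{-}\limsup_{\eps\to0}\Functeps\leq F_{\FailureS}^{(j)}$) when an upper bound is needed. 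The opposite inequalities are supplied either by explicit recovery-sequence constructions adapted from Propositions~\ref{p:uppertilde sigma finito} and \ref{p:uppertilde sigma infinito}, or by density arguments combined with the $L^1$-lower semicontinuity of the $\Gamma$-limsup.

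For (a), take $\varphi_j:=\varphi\wedge j$, so that $\varphi_j\leq\varphi$, $\varphi_j(\infty)=j$ and $\varphi_j'(0^+)=\varphi'(0^+)$; the lower bound $\Gamma\text{-}\liminf\Functeps\geq F_{\FailureS}^{(j)}$ is combined with the convex-analytic convergence $(h_\varsigma^{(j)})^{**}\nearrow h_\varsigma^{**}$ (by the same duality argument used in the proof of \eqref{e:hinftyapprox} in Proposition~\ref{p:proprieta hsigma}), while $g^{(j)}=g$ is independent of $j$ by \eqref{e:dependence g varphi}; when $\FailureS=\infty$, the identities $h_\infty^{**}(t)=\varphi(\infty)t^2=\infty$ for $t\neq 0$ and $(\varphi'(0^+))^{\sfrac12}\FailureS=\infty$ force $u'=0$ $\calL^1$-a.e.\ and $|D^cu|=0$. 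The matching upper bound follows from the constructions of Propositions~\ref{p:uppertilde sigma finito}--\ref{p:uppertilde sigma infinito}, whose steps only require finiteness of $\varphi'(0^+)$ (valid here by (Hp~4)); the convention $\varphi(1/\tau)=\varphi(\infty)$ at $\tau=0$ used there is irrelevant now since the infimum defining $h_\varsigma$ is attained at some $\tau_\xi>0$ thanks to $\varphi(\infty)=\infty$. For (b), choose $\varphi_j(t):=\varphi(t)+(\sfrac{t}{j})\wedge(\sfrac{1}{j^2})$, so that $\varphi_j\geq\varphi$ satisfies (Hp~3)--(Hp~4) with $\varphi_j'(0^+)=\sfrac{1}{j}$ and $\varphi_j(\infty)<\infty$; estimate \eqref{minorconlin} and item~(ii) of Proposition~\ref{p:lepropdig} bound, for every $u\in BV(\Omega)$ with $v=1$ $\calL^1$-a.e., each summand of $F_{\FailureS}^{(j)}(u,1)$ by $(\sfrac{1}{j})^{\sfrac12}\FailureS$ times the corresponding component of $|Du|$, so $\Gamma\text{-}\limsup\Functeps(u,1)\leq F_{\FailureS}^{(j)}(u,1)\leq(\sfrac{1}{j})^{\sfrac12}\FailureS\,|Du|(\Omega)\to 0$; density of $BV(\Omega)$ in $L^1(\Omega)$ combined with $L^1$-lower semicontinuity of the $\Gamma$-limsup extends the vanishing to every $u\in L^1(\Omega)$.

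For (c), set $\varphi_j(t):=\varphi(t)\wedge jt$, so that $\varphi_j\leq\varphi$, $\varphi_j(\infty)=\varphi(\infty)$ and $\varphi_j'(0^+)=j$. The comparison $\Gamma\text{-}\liminf\Functeps\geq F_{\FailureS}^{(j)}$ combined with $j\to\infty$ gives $(h_\varsigma^{(j)})^{**}\nearrow h_\varsigma^{**}$, divergence of the Cantor coefficient $j^{\sfrac12}\FailureS$ enforces $|D^cu|=0$, and \eqref{e:dependence g varphi} together with \eqref{e:Lafigrandeinfi} yields $g^{(j)}(s)=g_1(j^{\sfrac12}s)\to 2\Psi(1)$ for every $s>0$, producing the announced lower bound. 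The matching upper bound combines Step~1 of Proposition~\ref{p:uppertilde sigma finito} (which does not require $\varphi'(0^+)<\infty$) with the density/relaxation argument of Step~6 thereof to recover $h_{\FailureS}^{**}$ in the bulk; at each jump point one builds a Modica--Mortola style profile in which $v_\eps$ dips from $1$ to $0$ and back to $1$ over an $\eps$-neighbourhood, while $u_\eps$ is kept constant on the plateau $\{v_\eps=0\}$ (so that the bulk contribution $\varphi(\eps f^2(v_\eps))|u_\eps'|^2$ vanishes everywhere along the profile), giving the jump cost exactly $2\Psi(1)$. Case (d) is the superposition of (a) and (c): taking $\varphi_j(t):=\varphi(t)\wedge j\wedge jt$, both limiting arguments run jointly and yield the announced targets, forcing $u'=0$ $\calL^1$-a.e.\ and $u\in GSBV(\Omega)$ when $\FailureS=\infty$.

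The main technical obstacle is the verification of the monotone convergences $(h_\varsigma^{(j)})^{**}\to h_\varsigma^{**}$ in (a) and (c) under the various truncations of $\varphi$, which requires adapting the convex duality argument used in the proof of \eqref{e:hinftyapprox} in Proposition~\ref{p:proprieta hsigma} and checking equicoercivity uniformly in $j$; and, in (c), carrying out the plateau construction so that the bulk contribution at each jump remains infinitesimal despite the steep slope of $u_\eps$, which is delicate when $\varphi'(0^+)=\infty$ and demands a careful tuning of the profile of $v_\eps$ in the transition region relative to the rate at which $f$ blows up near $1$.
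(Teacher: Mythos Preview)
Your overall strategy---comparing with auxiliary functionals $\Functeps^{(j)}$ built from truncations $\varphi_j$ of $\varphi$ and then invoking Theorem~\ref{t:finale}---is exactly the paper's approach, and your choices of $\varphi_j$ in (a), (c), (d) match the paper's up to inessential variants. Two points, however, are genuine gaps.

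\textbf{Part (b), case $\FailureS=\infty$.} Your bound $F_{\FailureS}^{(j)}(u,1)\le(1/j)^{1/2}\FailureS\,|Du|(\Omega)$ is vacuous when $\FailureS=\infty$: the right-hand side is $+\infty$, and indeed the bulk term of $F_\infty^{(j)}$ equals $\varphi_j(\infty)\int_\Omega|u'|^2\dx$, which does \emph{not} tend to $0$ as $j\to\infty$. The paper avoids this by observing that the rough upper bound for the $\Gamma$-limsup is $\varphi(\infty)\int_\Omega|u'|^2\dx$, which vanishes on the $L^1$-dense class of piecewise constant functions (where also $g^{(j)}\to0$ kills the surface term), and only then invoking the $L^1$ lower semicontinuity of the $\Gamma$-limsup. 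Your final density step would work along these lines, but your intermediate inequality does not establish vanishing on $BV(\Omega)$ when $\FailureS=\infty$.

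\textbf{Part (c), upper bound at jumps.} Your Modica--Mortola construction has $u_\eps$ \emph{constant} on the plateau $\{v_\eps=0\}$; but then $u_\eps$ must transition from $u(x_0^-)$ to $u(x_0^+)$ somewhere in the region where $0<v_\eps<1$, and there $\varphi(\eps f^2(v_\eps))|u_\eps'|^2$ need not be small (and your closing paragraph correctly flags this as unresolved). The fix is the opposite: let $u_\eps$ make its entire transition \emph{on} the plateau $\{v_\eps=0\}$, where $\varphi(\eps f^2(0))=\varphi(0)=0$ annihilates the bulk contribution regardless of $|u_\eps'|$, and keep $u_\eps$ equal to $u$ outside. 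Equivalently, in the language of Proposition~\ref{p:uppertilde sigma infinito}, choose competitors $(\gamma_\eta,\beta_\eta)$ with $\beta_\eta=0$ on the support of $\gamma_\eta'$; this gives $g(s)=2\Psi(1)$ for $s>0$ and the estimate \eqref{e:stima energia 2} goes through without needing $\varphi(t)\le Ct$.
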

\begin{proof}
The general strategy is to compare each $\Functeps$ with an auxiliary functional $\Functeps^{(j)}$
either from below or from above according to the case. For every $j\in\N$, we may apply
Theorem~\ref{t:finale} to $(\Functeps^{(j)})_{\eps}$, letting $F_{\,\FailureS}^{(j)}$ be the corresponding
$\Gamma$-limit, we then pass to the limit as $j\to\infty$ to obtain an estimate from above for the $\Gamma$-limsup or from below for the $\Gamma$-liminf with the functional in the corresponding statement.

{\bf Proof of (a).} 
If $\FailureS\in(0,\infty)$, we can argue as in Theorem~\ref{t:finale}.
Indeed, the only difference is that the bulk energy density $h_{\FailureS}^{\ast\ast}$
is sub-quadratic close to the origin (cf. \eqref{e:laaccabrodo}).
Instead, if $\FailureS=\infty$ let $\Functeps^{(j)}$ be obtained substituting $\varphi$ with $j\wedge \varphi(t)$, then ${\displaystyle{\Gamma\hbox{-}\liminf_{\eps\to 0}\Functeps(u,v)\geq F_{\infty}^{(j)}}}$.
Note that the surface energy densities are given by $g$ in \eqref{e:lagiyo} for every $j\in\N$, instead
the bulk energy densities equal to $jt^2$ by item (ii) in Lemma~\ref{p:proprieta hsigma}.
The lower bound then follows. The upper bound is a consequence of Proposition~\ref{p:uppertilde sigma infinito}.

{\bf Proof of (b).} 
Let $\Functeps^{(j)}$ be obtained by substituting $\varphi$ in the definition of $\Functeps$ with the function given by $\sfrac tj$ on the connected component of the set
$\{t\in(0,\infty):\,\varphi(t)< \sfrac tj \}$ whose closure contains the origin, and equal to $\varphi$ otherwise.
Then, ${\displaystyle{\Gamma\hbox{-}\limsup_{\eps\to 0}\Functeps(u,v)\leq F_{\,\FailureS}^{(j)}}}$.
In view of \eqref{e:dependence g varphi}, the surface energy density $g^{(j)}$ of the latter equals $g_1(j^{-\sfrac12}s)$. Thus, $g^{(j)}(s)\to 0$ for every $s\in[0,\infty)$.
Hence, for every $\FailureS\in(0,\infty]$ a rough upper bound for
${\displaystyle{\Gamma\hbox{-}\limsup_{\eps\to 0}\Functeps}}$ is given by
\[
\widetilde{F}_{\FailureS}(u,v)=\varphi(\infty)\int_\Omega|u'|^2\dx
\]
if $u\in GBV(\Omega)$ and $v=1$ $\calL^1$-a.e. on $\Omega$, and $\infty$ otherwise on $L^1$.
The $L^1$ lower semicontinuous envelope of $\widetilde{F}_{\FailureS}$ coincides with the functional on the right hand side of \eqref{e:gamma limite varphiprimo = 0}, as
the class of piecewise constant functions with a finite number of jumps is dense in $L^1$.

{\bf Proof of (c).} Let $\widetilde{\Functu}_{\FailureS}$
be the functional on the right hand side of \eqref{e:gamma limite varphiprimo = infty},
and let $\Functeps^{(j)}$ be obtained substituting $\varphi$ with $j t\wedge \varphi(t)$ for every $j\in\N$.
Then ${\displaystyle{\Gamma\hbox{-}\liminf_{\eps\to 0}\Functeps(u,v)\geq F_{\FailureS}^{(j)}}}$, with
$g^{(j)}(s)=g_1(js)\to 2\Psi(1)\chi_{(0,\infty)}(s)$, and $h_{\FailureS,j}^{**}\leq h_{\FailureS,j+1}^{**}$
with $h_{\FailureS,j}^{**}\to h_{\FailureS}^{**}$ for every $j\in\N$.
Indeed, the latter assertion is trivial if $\FailureS=\infty$ thanks to the identity $h_{\infty,j}^{**}(t)=\varphi(\infty)t^2$ for every $j\in\N$ and $t\geq0$
(cf. item (iii) in Proposition~\ref{p:proprieta hsigma}).
Instead, if $\FailureS\in(0,\infty)$ as $h_{\FailureS,j}(t)=\inf_{[0,\infty)}
\{\textstyle{(\frac1\tau\wedge\varphi(\frac 1{j\tau}))t^2+\frac{\FailureS^2}4j\tau}\}
\leq h_{\FailureS}(t)\leq\varphi(\infty)t^2$, a minimum point $\tau_j$ satisfy $j\tau_j\leq\frac4{\FailureS^2}\varphi(\infty)t^2$. Thus, being $\varphi$ bounded, we have
$h_{\FailureS,j}(t)=\textstyle{\varphi(\frac 1{j\tau_j})t^2+\frac{\FailureS^2}4j\tau_j}\geq h_{\FailureS}(t)$
for $j$ sufficiently large. More precisely, for every $M>0$, $h_{\FailureS,j}=h_{\FailureS}$ on $[0,M]$
for $j$ sufficiently large. The conclusion then follows arguing as to establish
\eqref{e:hinftyapprox} in Proposition~\ref{p:proprieta hsigma}.
Thus, ${\displaystyle{\Gamma\hbox{-}\liminf_{\eps\to 0}\Functeps\geq\widetilde{\Functu}_{\FailureS}}}$,
and the upper bound follows as in Proposition~\ref{p:uppertilde sigma infinito} if $\FailureS=\infty$, and
Proposition~\ref{p:uppertilde sigma finito} otherwise (in the latter case $h_{\FailureS}^{**}$ has superlinear growth at infinity, cf. \eqref{e:laaccabroo} in Lemma~\ref{p:proprieta hsigma}).

{\bf Proof of (d).} If $\FailureS=\infty$ we consider $j\wedge \varphi(t)$ and use the approximation argument in item (a)
to conclude. Instead, if $\FailureS\in(0,\infty)$ we consider $jt\wedge \varphi(t)$ and use the approximation argument in item (c)
to conclude.
\end{proof}


\section*{Acknowledgments} The first author has been supported by the European Union - Next Generation EU, Mission 4 Component 1 CUP G53D23001140006, codice 20229BM9EL, PRIN2022 project: “NutShell - NUmerical modelling and opTimisation of SHELL Structures Against Fracture and Fatigue with Experimental Validations”.
The first author also acknowledges the Italian National Group of Mathematical Physics INdAM-GNFM.

The second and third authors have been supported by the European Union - Next Generation EU, Mission 4 Component 1 CUP B53D23009310006, codice 2022J4FYNJ, PRIN2022 project ``Variational methods for stationary and evolution problems with singularities and interfaces''.
The second and third authors are members of GNAMPA - INdAM.

\bibliographystyle{alpha-noname}
\bibliography{%
  ./Research-Fracture-2023-Cohesive-Focardi-Colasanto-Alessi.bib,%
  ./cfi.bib}

\end{document}